\newtheorem{theorem}{Theorem}
\newtheorem{corollary}[theorem]{Corollary}
\newtheorem{lemma}[theorem]{Lemma}
\newtheorem{proposition}[theorem]{Proposition}
\newenvironment{proof}[1][Proof]{\textbf{#1.} }{\ \rule{0.5em}{0.5em}}
\begin{document}

\title{Group Representations and High-Resolution Central
Limit Theorems for Subordinated Spherical Random Fields}
\date{July 10, 2007}

\author{Domenico MARINUCCI\thanks{%
Department of Mathematics, University of Rome \textquotedblleft
Tor Vergata\textquotedblright. E-mail:
\texttt{marinucc@mat.uniroma2.it}}\ \ and \
Giovanni PECCATI \thanks{%
Laboratoire de Statistique Th\'{e}orique et Appliqu\'{e}e,
Universit\'{e} Paris VI. E-mail:
\texttt{giovanni.peccati@gmail.com}}}
\date{July 10, 2007}
\maketitle

\begin{abstract}
\noindent We study the weak convergence (in the high-frequency
limit) of the frequency components associated with
Gaussian-subordinated, spherical and isotropic random fields. In
particular, we provide conditions for asymptotic Gaussianity and
we establish a new connection with random walks on the
hypergroup $\widehat{SO\left( 3\right) }$ (the dual of $SO\left( 3\right) $%
), which mirrors analogous results previously established for
fields defined on Abelian groups (see Marinucci and Peccati
(2007)). Our work is motivated by applications to cosmological
data analysis, and specifically by the probabilistic modelling and
the statistical analysis of the Cosmic Microwave Background
radiation, which is currently at the frontier of physical
research. To obtain our main results, we prove several fine
estimates involving convolutions of the so-called
\textsl{Clebsch-Gordan coefficients} (which are elements of\
unitary matrices connecting reducible representations of $SO\left(
3\right) $); this allows to intepret most of our asymptotic
conditions in terms of coupling of angular momenta in a quantum
mechanical system. Part of the proofs are based on recently
established criteria for the weak convergence of multiple
Wiener-It\^{o} integrals. \\
\textbf{AMS Classification.} 60B15; 60F05; 60G60 \\
\textbf{Keywords.} Asymptotics; Central Limit Theorems;
Clebsch-Gordan Coefficients; Cosmic Microwave Background; Gaussian
Subordination; Group Representations; High Resolution Asymptotics;
Multiple Wiener-It\^{o} Integrals; Spectral Representation;
Spherical Random Fields.
\end{abstract}

\section{Introduction}

This paper deals with weak limit theorems involving the high-frequency
components (in the sense of the spherical harmonics decomposition) of random
fields defined on the unit sphere $\mathbb{S}^{2}$. Our results are
motivated by a number of mathematical issues arising in connection with the
probabilistic and statistical analysis of the Cosmic Microwave Background
radiation (see e.g. \cite{dodelson}). We start by giving a description of
our abstract mathematical framework, along with a sketch of the main results
of the paper; the subsequent Section \ref{ss : PhysMot} focuses on the
physical motivations and applications of our research. Here, and for the
rest of the paper, all random elements are defined on a suitable probability
space $\left( \Omega ,\mathcal{F},\mathbb{P}\right) $.

\subsection{General framework and outline of the main results}

We shall consider real-valued random fields $\{\widetilde{T}(x):x\in \mathbb{%
S}^{2}\}$ enjoying the following properties:%
\begin{equation}
\mathbb{E}\widetilde{T}(x)=0\text{ , }\mathbb{E}\widetilde{T}^{2}(x)<+\infty
\text{ \ and \ }\widetilde{T}(gx)\overset{law}{=}\widetilde{T}(x)\text{,}
\label{IntroDEF}
\end{equation}%
for all $x\in \mathbb{S}^{2}$ and all $g\in SO(3)$, where $\overset{law}{=}$
denotes equality in law (in the sense of stochastic processes). A field
verifying the last relation in (\ref{IntroDEF}) is usually called \textsl{%
isotropic }or \textsl{rotationally-invariant }(in law). It is a standard
result that the following spectral representation holds in the mean-square
sense:%
\begin{equation}
\widetilde{T}\left( x\right) =\sum_{l=0}^{\infty }\widetilde{T}%
_{l}(x)=\sum_{l=0}^{\infty }\sum_{m=-l}^{l}a_{lm}Y_{lm}\left( x\right) \text{%
,}  \label{specrap}
\end{equation}%
where $\left\{ Y_{lm}:l\geq 0\text{, }m=-l,...,l\right\} $ is the
collection of the spherical harmonics, and the $\left\{
a_{lm}\right\} $ are the associated (harmonic) Fourier coefficients. For $%
l\geq 0$, we also write $C_{l}\triangleq \mathbb{E}\left\vert
a_{lm}\right\vert ^{2}$, and we call the sequence $\left\{ C_{l}:l\geq
0\right\} $ the \textsl{angular power spectrum} of the random field $%
\widetilde{T}$ (note that $C_{l}$ does not depend on $m$ -- see e.g. \cite%
{BaMa}). For every $l\geq 0$, the field $\widetilde{T}_{l}$ provides the
projection of $\widetilde{T}$ on the subspace of $L^{2}(\mathbb{S}^{2},dx)$
spanned by the class $\left\{ Y_{lm}:m=-l,...,l\right\} $. The spherical
harmonics form an orthonormal basis of $L^{2}(\mathbb{S}^{2},dx)$ which can
be derived from the restriction to the sphere of harmonic polynomials. In
particular, in spherical coordinates $x=(\theta ,\varphi )$ they can be
written explicitly as: $Y_{00}\equiv 1/\sqrt{4\pi }$ and%
\begin{eqnarray}
Y_{lm}(\theta ,\varphi ) &=&\sqrt{\frac{2l+1}{4\pi }\frac{(l-m)!}{(l+m)!}}%
P_{lm}(\cos \theta )e^{im\varphi }\text{, }m\geq 0\text{ ,}  \label{IntroSH1}
\\
Y_{lm}(\theta ,\varphi ) &=&(-1)^{m}\overline{Y_{l,-m}}(\theta ,\varphi )%
\text{, }m<0,\text{ }0\leq \theta \leq \pi ,\text{ }0\leq \varphi <2\pi
\text{ ,}  \label{IntroSH2}
\end{eqnarray}%
where, for $l\geq 1$ and $m=0,1,2,...,l,$ $P_{lm}(\cdot )$ denotes the
Legendre polynomial of index $l,m,$ i.e.,%
\begin{equation}
P_{lm}(x)=(-1)^{m}(1-x^{2})^{m/2}\frac{d^{m}}{dx^{m}}P_{l}(x)\text{ , }%
P_{l}(x)=\frac{1}{2^{l}l!}\frac{d^{l}}{dx^{l}}(x^{2}-1)^{l}.
\label{Legendre}
\end{equation}%
For a discussion of these and other properties of the spherical harmonics
see e.g. \cite[Chapter 9]{Libo}, or \cite[Chapter 5]{VMK}. For $l\geq 0$,
the real-valued field $\widetilde{T}_{l}$ is called the $l$th \textsl{%
frequency component }of $\widetilde{T}$. The expansion (\ref{specrap}) can
be achieved by many different routes, for instance by a Karhunen-Lo\'{e}ve
argument or by means of the stochastic Peter-Weyl theorem, see for instance
\cite{Adler}, \cite{BaMaVa}, \cite{Leon} and \cite{PePy}. The random
harmonic coefficients $\left\{ a_{lm}\right\} $ appearing in (\ref{specrap})
form a triangular array of zero-mean random variables, which are
complex-valued for $m\neq 0$ and such that $\mathbb{E}a_{lm}\overline{%
a_{l^{\prime }m^{\prime }}}=\delta _{l}^{l^{\prime }}\delta
_{m}^{m^{\prime}}C_{l}$ (the bar denotes complex conjugation and
$\delta $ is Kronecker's symbol; note also that
$a_{lm}=(-1)^{m}\overline{a_{l-m}}$). For a Gaussian random field
$\widetilde{T}$ verifying (\ref{IntroDEF}), it is trivial that the
set $\left\{ a_{lm}\right\} $ is itself a complex-Gaussian array,
with independent elements for $m\geq 0$. It is a simple but
interesting fact that
the converse also holds, i.e. that, under an isotropy assumption on $%
\widetilde{T}$, the independence of the $a_{lm}$'s for $m\geq 0$ implies
Gaussianity, see \cite{BaMa}. Apart from this result, the behaviour of the
array $\left\{ a_{lm}\right\} $ and of the projections $\{\widetilde{T}%
_{l}\} $ for non-Gaussian isotropic fields is so far almost completely
unexplored and open for research, although such objects are highly relevant
for cosmological applications (see the next subsection). It should be
stressed that the coefficients $\left\{ a_{lm}\right\} $ depend on the
choice of coordinates and are not intrinsic to the field, although their law
is. In this sense, it is sometimes physically more sound to focus on the
behaviour of the sequence of projections $\{\widetilde{T}_{l}\}$, which are
indeed invariant with respect to the choice of coordinates.

In what follows, we focus on non-Gaussian fields $\widetilde{T}$ that are
\textsl{Gaussian-subordinated}, and we address the previous topic by
studying the asymptotic behaviour of $\left\{ a_{lm}\right\} $ and $\{%
\widetilde{T}_{l}\}$, as $l\rightarrow +\infty $. Recall that $\widetilde{T}$
is called \textsl{Gaussian-subordinated} whenever $\widetilde{T}\left(
x\right) =F\left( T\left( x\right) \right) $, where $F$ is a suitable
real-valued function, and $T$ is an isotropic spherical (real) Gaussian
field. In particular, our purpose is to establish sufficient (and sometimes,
also necessary) conditions on $F$ and on the law of $T$ to have that the
following two phenomena take place: (\textbf{I}) as $l\rightarrow +\infty $,
for a fixed $m$ and for an appropriate sequence $\tau _{1}\left( l\right) $ (%
$l\geq \left\vert m\right\vert $), the sequence $$\tau _{1}\left(
l\right) \times a_{lm} =\tau _{1}\left( l\right)
\int_{\mathbb{S}^{2}}F\left( T\left( z\right) \right)
\overline{Y_{lm}\left( z\right) }dz, \text{ \ \ } l\geq \left\vert
m\right\vert$$ converges in law to a Gaussian random variable
(real-valued for $m=0$, and complex-valued for $m\neq 0$);
(\textbf{II}) for a suitable real-valued sequence $\tau _{2}\left(
l\right) $ ($l\geq 0$) and for $l$ sufficiently large, the
finite-dimensional distributions of the field $$\tau _{2}\left(
l\right) \times \widetilde{T}_{l}\left( \cdot \right)=\tau
_{2}\left( l\right) \sum_{m=-l,...,l}a_{lm}Y_{lm}\left( \cdot
\right), $$
are close (for instance, in the sense of the Prokhorov
distance -- see \cite{Pesco}) to those of a real spherical
Gaussian field. Note that
both results (\textbf{I}) and (\textbf{II}) can be interpreted as CLTs%
\textsl{\ in the high-frequency }(or \textsl{high-resolution}) \textsl{sense}%
, since they involve Gaussian approximations and are established by letting
the frequency index $l$ diverge to infinity.

Our findings generalize previous results, obtained in \cite{MaPe}, for
fields defined on Abelian compact groups. One of our main tools is a result
concerning the Gaussian approximation of multiple Wiener-It\^{o} integrals
established in \cite{Pesco} (see also \cite{NuPe}, \cite{PeTaq2} and \cite%
{PT}). These CLTs can be seen as a simplification of the combinatorial
\textsl{method of diagrams and cumulants }(see e.g. \cite{Surg}). These
techniques, combined with the use of group representation theory, lead to
one of the main contributions of this paper: the derivation of sufficient
(or necessary and sufficient) conditions for (\textbf{I}) and (\textbf{II}),
expressed in terms of convolutions of \textsl{Clebsch-Gordan coefficients}
(see e.g. \cite[Ch. 4]{VMK}), which are the elements of unitary matrices
connecting specific reducible representations of $SO\left( 3\right) $.
Clebsch-Gordan coefficients are widely used in quantum mechanics, and admit
a well-known interpretation in terms of probability amplitudes related to
the coupling of angular momenta in a quantum mechanical system (see \cite%
{Libo}, \cite{VMK} or Sections \ref{S : Clebsch-Gordan} and \ref{S :
Roynette} below). We will also show that many of our conditions can be
alternatively restated in terms of `bridges' of random walks on $\widehat{%
SO\left( 3\right) }$ (the dual of $SO\left( 3\right) $). The definition of
such random walks differs from the classic one given in \cite{GKR}, although
the two approaches can be related through the notion of \textsl{mixed
quantum state} (see Section \ref{S : Roynette}). Note that an analogous
connection with random walks on $\mathbb{Z}^{d}$ was pointed out in \cite%
{MaPe}.

\subsection{Cosmological motivations\label{ss : PhysMot}}

The Cosmic Microwave Background radiation (hereafter CMB) can be viewed as a
relic radiation of the Big Bang, providing maps of the primordial Universe
before the formation of any of the current structures (approximately, $%
3\times 10^{5}$ years after the Big Bang); as such, it is acknowledged as a
goldmine of information for fundamental physics. Many satellite experiments
involving hundred of physicists throughout the world are devoted to the
construction of spherical maps of the CMB radiation, and for pioneering work
in this area G. Smoot and J. Mather were awarded the Nobel Prize for Physics
in 2006 -- see for instance \texttt{http://map.gsfc.nasa.gov/} for more
details.

The crucial point is that most cosmological models imply that the CMB
radiation is the realization of a random field $\{\widetilde{T}(x):x\in
\mathbb{S}^{2}\}$, verifying the three conditions in (\ref{IntroDEF}); each $%
x\in \mathbb{S}^{2}$ corresponds to a direction in which the CMB radiation\
is measured. The isotropic property can be seen as a consequence of
Einstein's \textsl{cosmological principle}, roughly stating that, on
sufficiently large distance scales, the Universe looks identical everywhere
in space (homogeneity) and appears the same in every direction (isotropy). A
central issue in modern cosmology relates therefore to the distribution of
the CMB random field $\widetilde{T}$, which is predicted to be (close to)
Gaussian by some models for the dynamics at primordial epochs (for instance,
by the so-called \textsl{inflationary scenario}), and non-Gaussian by other
models, where fluctuations are generated by topological defects arising in
phase transitions of a thermodynamical nature -- see for instance \cite%
{dodelson}. Many testing procedures have been proposed to tackle this issue;
in some form, they all rely asymptotically on the behaviour of the field at
the highest frequencies (see for instance \cite{Bart}, \cite{Marinucci} and
the references therein). This is a sort of unescapable, foundational issue
in Cosmology. By definition, the latter is a science based on a single
realization, e.g. our Universe or the trace of its primordial structure in
the form of the CMB\ radiation, which is observed at higher and higher
resolutions. As such, an asymptotic theory for statistical tests is possible
only in the sense of observations at higher and higher frequencies (smaller
and smaller scales) becoming available as the experiments become more
sophisticated. In particular, any satellite experiment measuring the CMB
radiation can reconstruct the spherical harmonic developement appearing in (%
\ref{specrap}) only up to a finite frequency $l_{\max }$, the quantity $\pi
/l_{\max }$ representing approximately the \textsl{angular resolution} of
the experiment (the pioneering satellite COBE (1993) could reach a frequency
$l_{\max }\simeq 20$, WMAP (2003, 2006) improved this limit to $l_{\max
}\simeq 600/800$, and Planck (to be launched in 2008) is expected to reach $%
l_{\max }\simeq 2500/3000$). In order for such procedures to yield
consistent outcomes, one should therefore figure out what is the limiting
behaviour of $\{\widetilde{T}_{l}\}$, for $l>>0$, under different
distributional assumptions on $\widetilde{T}$. Some Monte Carlo evidence
(see for instance \cite{MaPi} and the references therein) has suggested that
this behaviour may be close to Gaussian even in circumstances where the
underlying field $\widetilde{T}$ clearly is not. The investigation of this
issue is necessary for rigorous inference on CMB data, and in particular for
non-Gaussianity tests. The relevance of the asymptotic behaviour of the $\{%
\widetilde{T}_{l}\}$, however, goes much beyond the issue of such tests, and
relates indeed to the whole statistical analysis of CMB -- which is largely
dominated by likelihood approaches (see \cite{efst}).

We stress by now that the results we provide cover models that are quite
relevant for cosmological applications, for instance the so called \textsl{%
Sachs-Wolfe model}, which represents the standard starting model for the
inflationary scenario (see for instance \cite{Bart}, \cite{dodelson}). In
its simplest version, this model implies that the CMB is a straightforward
quadratic transformation of an underlying Gaussian field, i.e.%
\begin{equation}
\widetilde{T}(x)=T(x)+f_{NL}\left\{ T(x)^{2}-\mathbb{E}T(x)^{2}\right\}
\text{ , \ \ }x\in \mathbb{S}^{2}\text{,}  \label{swmodel}
\end{equation}%
where $f_{NL}$ is a nonlinearity parameter depending on constants from
particle physics and $T$ is Gaussian and isotropic. As a special case, our
results do allow for a complete characterization of the high-frequency
behaviour of models such as (\ref{swmodel}), and in this sense they are
immediately applicable in the cosmological literature.

\subsection{Plan}

In Section 2 we provide some background material on isotropic
random fields on the sphere. Section 3 is devoted to a discussion
on representation theory for the group of rotations $SO(3)$ and
the so-called Clebsch-Gordan coefficients, which will play a
crucial role in the analysis to follow. In Section 4 we state and
prove a general CLT result for the spherical harmonics
coefficients and the high-frequency components of a field arising
from polynomial transformations of arbitrary order of a
subordinating Gaussian process. In Section 5 we provide a more
detailed analysis of necessary and sufficient condition for the
CLT to hold in the case of quadratic and cubic transformations; we
also highlight the connections between our conditions and the
theory of random walks on hypergroups. The interplay with random
walks on hypergroups is further explored in Section 6, where some
comparisons with the existing literature are provided, and some
physical interpretations of our conditions in terms of randomly
interacting quantum particles are given. In Section 7, we turn our
attention to more explicit conditions on the angular power
spectrum, and we discuss an exponential/algebraic duality which
parallels to some extent some earlier findings in the Abelian
case.

\section{Preliminaries on Gaussian and Gaussian-subordinated\
i\-so\-tro\-pic fields\label{S : GaussSub}}

As in the Introduction, we denote by $\mathbb{S}^{2}$ the unit sphere $%
\mathbb{S}^{2}=\left\{ x\in \mathbb{R}^{3}:\left\Vert x\right\Vert
=1\right\} $. For every rotation $g\in SO\left( 3\right) $ and every $x\in
\mathbb{S}^{2}$, the symbol $gx$ indicates the canonical action of $g$ on $x$
(see \cite[Ch. 1]{VMK}, as well as Section \ref{S : Clebsch-Gordan} below,
for further details). We will systematically write $dx$ for the Lebesgue
measure on $\mathbb{S}^{2}$, and we denote by $L^{2}\left( \mathbb{S}%
^{2},dx\right) $ the class of complex-valued functions on $\mathbb{S}^{2}$
which are square-integrable with respect to $dx$. We denote by $\left\{
Y_{lm}:l\geq 0\text{, }m=-l,...,l\right\} $ the basis of $L^{2}\left(
\mathbb{S}^{2},dx\right) $ given by spherical harmonics, as defined via (\ref%
{IntroSH1}) and (\ref{IntroSH2}). From now on, we shall denote by $T=\left\{
T\left( x\right) :x\in \mathbb{S}^{2}\right\} $ a centered, real-valued and
\textsl{Gaussian} random field parametrized by $\mathbb{S}^{2}$. We also
suppose that $T$ is \textsl{isotropic}, that is, for every $g\in SO\left(
3\right) $ one has that $T\left( x\right) \overset{law}{=}T\left( gx\right) $%
, where the equality holds in the sense of finite dimensional distributions.
To simplify the notation, we also assume that $\mathbb{E}T\left( x\right)
^{2}=1$. Following e.g. \cite{BaMa} (but see also \cite{BaMaVa}, \cite{PePy}
and \cite{Pyc}), one deduces from isotropy that $T$ admits the spectral
decomposition
\begin{equation}
T\left( x\right) =\sum_{l=0}^{\infty }\sum_{m=-l}^{l}a_{lm;1}Y_{lm}\left(
x\right) =\sum_{l=0}^{\infty }T_{l}\left( x\right) \text{, \ \ }x\in \mathbb{%
S}^{2}\text{,}  \label{spectral-G}
\end{equation}%
where $a_{lm;1}\triangleq \int_{\mathbb{S}^{2}}T\left( x\right) \overline{%
Y_{lm}\left( x\right) }dx$ (the role of the subscript \textquotedblleft $%
lm;1 $\textquotedblright\ will be clarified in the following discussion), $%
T_{l}\left( x\right) $ $\triangleq $ $\sum_{m=-l}^{l}a_{lm;1}Y_{lm}\left(
x\right) $, and the convergence takes place in $L^{2}\left( \mathbb{P}%
\right) $ for every fixed $x$, as well as in $L^{2}\left( \mathbb{P}\otimes
dx\right) $. The next result gives a simple and very useful characterization
of the joint law of the complex-valued array $\left\{ a_{lm;1}:l\geq 0\text{%
, }m=-l,...,l\right\} $. For every $z\in \mathbb{C}$, the symbols $\Re
\left( z\right) $ and $\Im \left( z\right) $ indicate, respectively, the
real and the imaginary part of $z$.

\begin{proposition}
\label{P : BaMa}Let $T$ be the centered, isotropic and Gaussian random field
appearing in (\ref{spectral-G}). Then: (i) for every $l\geq 0$ the random
variable $a_{l0;1}$ is real-valued, centered and Gaussian; (ii) for every $%
l\geq 1$, and every $m=1,...,l$, the random variable $a_{lm;1}$ is
complex-valued and such that $a_{lm;1}$ $=$ $\left( -1\right) ^{m}\overline{%
a_{l-m;1}}$, and moreover $\mathbb{E(}\Re \left( a_{lm;1}\right) ^{2})$ $=$ $%
\mathbb{E(}\Im \left( a_{lm;1}\right) ^{2})$ $=$ $\mathbb{E(}a_{l0;1}^{2})/2$
$=$ $C_{l}/2$, for some constant $C_{l}\in \left[ 0,+\infty \right) $ not
depending on $m$, and%
\begin{equation}
\mathbb{E(}\Re \left( a_{lm;1}\right) \Im \left( a_{lm;1}\right) )=0;
\label{aa'}
\end{equation}%
(iii) for every $l\geq 1$ and every $m=-l,...,l$, the random coefficient $%
a_{lm;1}$ is independent of $a_{l^{\prime }m^{\prime };1}$ for every $%
l^{\prime }\geq 0$ such that $l^{\prime }\neq l$ and every $m^{\prime
}=-l^{\prime },...,l^{\prime }$. By noting $C_{0}\triangleq \mathbb{E(}%
a_{00;1}^{2})$, one also has the relation
\begin{equation}
1=\mathbb{E}\left[ T\left( x\right) ^{2}\right] =\sum_{l=0}^{\infty }\frac{%
2l+1}{4\pi }C_{l}\text{.}  \label{var}
\end{equation}
\end{proposition}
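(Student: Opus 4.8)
The plan is to combine three ingredients: the Gaussianity of $T$, the conjugation symmetry of the spherical harmonics encoded in (\ref{IntroSH2}), and the rotation-invariance of the second-order structure forced by isotropy. First, each $a_{lm;1}=\int_{\mathbb{S}^{2}}T(x)\overline{Y_{lm}(x)}\,dx$ lies in the $L^{2}(\mathbb{P})$-closure of the linear span of $\{T(x):x\in\mathbb{S}^{2}\}$ (a Gaussian Hilbert space), so the real and imaginary parts $\Re(a_{lm;1})$, $\Im(a_{lm;1})$ form a jointly (real) Gaussian family; this is what will eventually let me pass from lack of correlation to genuine independence. Because $T$ is real-valued, conjugating the defining integral and invoking $\overline{Y_{lm}}=(-1)^{m}Y_{l,-m}$ (equivalent to (\ref{IntroSH2})) gives $a_{lm;1}=(-1)^{m}\overline{a_{l,-m;1}}$. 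Taking $m=0$ shows that $a_{l0;1}$ is real, which together with Gaussianity yields (i) and the conjugation relation asserted in (ii).

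The core of the argument is the covariance computation. Writing $\mathbf{a}_{l}=(a_{l,-l;1},\dots,a_{l,l;1})^{\top}$, the transformation rule of the spherical harmonics under a rotation $g$ (recalled in Section \ref{S : Clebsch-Gordan}) shows that the coefficient vector of the rotated field $T(g\,\cdot)$ is $D^{l}(g)\mathbf{a}_{l}$, where $\{D^{l}(g):g\in SO(3)\}$ is the irreducible unitary representation of $SO(3)$ of dimension $2l+1$. Isotropy, $T(g\,\cdot)\overset{law}{=}T(\cdot)$, then forces the matrices $M_{l,l'}:=\big(\mathbb{E}[a_{lm;1}\overline{a_{l'm';1}}]\big)_{m,m'}$ to obey $D^{l}(g)M_{l,l'}=M_{l,l'}D^{l'}(g)$ for every $g$, i.e. each $M_{l,l'}$ intertwines $D^{l}$ and $D^{l'}$. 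By Schur's lemma $M_{l,l'}=0$ when $l\neq l'$ (inequivalent irreducibles) and $M_{l,l}=C_{l}I$ for a scalar $C_{l}$; positive semidefiniteness of the covariance matrix forces $C_{l}\geq0$. Hence $\mathbb{E}[a_{lm;1}\overline{a_{l'm';1}}]=C_{l}\,\delta_{l,l'}\delta_{m,m'}$. (A representation-free alternative is to observe that isotropy makes $\mathbb{E}[T(x)\overline{T(y)}]$ a function of $x\cdot y$ alone, expand it in Legendre polynomials, and read off the same identity from the addition theorem and the orthonormality of the $Y_{lm}$.)

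It remains to extract the real/imaginary structure, the independence, and (\ref{var}). For the pseudo-covariances I would reuse the conjugation relation to write $\mathbb{E}[a_{lm;1}a_{l'm';1}]=(-1)^{m'}\mathbb{E}[a_{lm;1}\overline{a_{l',-m';1}}]=(-1)^{m'}C_{l}\,\delta_{l,l'}\delta_{m,-m'}$; specialising to $l'=l$ and $m'=m\geq1$ gives $\mathbb{E}[a_{lm;1}^{2}]=0$, whose real and imaginary parts furnish $\mathbb{E}[\Re(a_{lm;1})^{2}]=\mathbb{E}[\Im(a_{lm;1})^{2}]$ together with (\ref{aa'}); combining with $\mathbb{E}[|a_{lm;1}|^{2}]=C_{l}$ and the $m=0$ diagonal $\mathbb{E}[a_{l0;1}^{2}]=C_{l}$ pins the common value to $C_{l}/2$ and completes (ii). Since the whole family of real and imaginary parts is jointly Gaussian, the simultaneous vanishing of $M_{l,l'}$ and of the pseudo-covariances for $l\neq l'$ upgrades uncorrelatedness to independence, giving (iii). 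Finally, using the covariance identity one gets $\mathbb{E}[T(x)^{2}]=\sum_{l,m}C_{l}|Y_{lm}(x)|^{2}$, and the addition theorem $\sum_{m=-l}^{l}|Y_{lm}(x)|^{2}=\frac{2l+1}{4\pi}$ combined with the normalisation $\mathbb{E}[T(x)^{2}]=1$ yields (\ref{var}). The one genuinely delicate step is the application of Schur's lemma: everything turns on the irreducibility and pairwise inequivalence of the $D^{l}$, whereas the remaining manipulations are bookkeeping.
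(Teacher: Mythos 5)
Your proof is correct, but there is nothing internal to compare it against: the paper does not prove Proposition \ref{P : BaMa} at all. It defers entirely to the reference \cite{BaMa} (``The reader is referred to \cite{BaMa} for a proof\ldots''), and the only piece of the statement it justifies in-text is the identity (\ref{var}), which it observes to be a consequence of the addition formula (\ref{angle}) --- exactly the observation you make at the end of your argument. Your route is the standard group-representation one and is essentially what the cited source carries out: conjugation symmetry of the spherical harmonics gives $a_{lm;1}=(-1)^{m}\overline{a_{l,-m;1}}$ and the reality of $a_{l0;1}$; isotropy forces the intertwining relation $D^{l}(g)M_{l,l'}=M_{l,l'}D^{l'}(g)$, which Schur's lemma collapses to $M_{l,l'}=C_{l}\,\delta_{l,l'}I$; the pseudo-covariance computation $\mathbb{E}[a_{lm;1}a_{l'm';1}]=(-1)^{m'}C_{l}\delta_{l,l'}\delta_{m,-m'}$ then yields (\ref{aa'}) together with $\mathbb{E}[\Re(a_{lm;1})^{2}]=\mathbb{E}[\Im(a_{lm;1})^{2}]=C_{l}/2$; joint Gaussianity (all coefficients live in the Gaussian Hilbert space generated by $T$) upgrades the vanishing of covariances and pseudo-covariances to the independence claimed in (iii); and the addition theorem closes (\ref{var}). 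One point deserving an explicit sentence in a careful write-up: the precise transformation rule of the coefficient vector under a rotation is convention-dependent (it may be $\overline{D^{l}(g)}$ or $D^{l}(g^{-1})^{\top}$ rather than $D^{l}(g)$), but this is harmless, since whichever matrices arise form a $(2l+1)$-dimensional unitary irreducible representation, and representations of distinct dimensions are inequivalent --- which is all that the Schur argument requires.
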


The reader is referred to \cite{BaMa} for a proof of Proposition \ref{P :
BaMa}, as well as for several converse statements. Here, we shall only
stress that formula (\ref{var}) is a consequence of the well-known relation
(see e.g. \cite{VMK})
\begin{equation}
\sum_{m=-l}^{l}Y_{lm}(x)\overline{Y_{lm}(y)}=\frac{2l+1}{4\pi }P_{l}(\cos
\left\langle x,y\right\rangle )\text{, \ \ }x,y\in \mathbb{S}^{2}\text{,}
\label{angle}
\end{equation}%
where $\left\langle x,y\right\rangle $ is the angle between $x$ and $y$.\
Observe that property (\ref{aa'}) implies that $\Re \left( a_{lm;1}\right) $
and $\Im \left( a_{lm;1}\right) $ are independent centered Gaussian random
variables. Moreover, the combination of (\ref{aa'}) and point (iii) in the
statement of Proposition \ref{P : BaMa} yields that $\mathbb{E(}a_{lm;1}%
\overline{a_{l^{\prime }m^{\prime };1}})$ $=$ $0$, $\forall \left(
l,m\right) $ $\neq $ $\left( l^{\prime },m^{\prime }\right) $. Finally, it
is also evident that points (i)-(iii) in the previous statement imply that
the law of an isotropic Gaussian field such as $T$ is completely
characterized by its angular power spectrum $\left\{ C_{l}:l\geq 0\right\} $%
. To avoid trivialities, we will always work under the following assumption:

\smallskip \textbf{Assumption. }The angular power spectrum $\left\{
C_{l}:l\geq 0\right\} $ is such that $C_{l}>0$ for every $l$. \smallskip

Note that the results of this paper could be extended without difficulties
(but at the cost of an heavier notation) to the case of a power spectrum
such that $C_{l}\neq 0$ for infinitely many $l$'s. In the subsequent
sections, we shall obtain high-frequency CLTs for centered isotropic
spherical fields that are \textsl{subordinated} to the Gaussian field $T$
defined above.

\smallskip

\textbf{Definition A }(\textit{Subordinated fields}).\textbf{\ }Let $%
L_{0}^{2}(\mathbb{R}$, $e^{-z^{2}/2}dz)$ indicate the class of
real-valued functions $F\left( z\right) $ on $\mathbb{R}$, which
are square-integrable with respect to the measure $e^{-z^{2}/2}dz$
and such that $\int F\left( z\right) e^{-z^{2}/2}dz=0$. A
(centered) random field $\widetilde{T}= \{\widetilde{T}\left(
x\right) :x\in \mathbb{S}^{2}\} $ is said to be
\textsl{subordinated} to the Gaussian field $T$ appearing in
(\ref{specrap})
if there exists $F\in L_{0}^{2}(\mathbb{R}$, $e^{-z^{2}/2}dz)$ such that $%
\widetilde{T}\left( x\right) $ $=F\left[ T\right] \left( x\right) $,\ $%
\forall x\in \mathbb{S}^{2}$, where the symbol $F\left[ T\right] \left(
x\right) $ stands for $F\left( T\left( x\right) \right) $. Whenever $%
\widetilde{T}$ is subordinated, we will rather use the notation $F\left[ T%
\right] \left( x\right) $ instead of $\widetilde{T}\left( x\right) $, in
order to emphasize the role of the function $F$. Of course, if $F\left(
z\right) =z$, then $F\left[ T\right] \left( x\right) $ $=\widetilde{T}\left(
x\right) $ $=T\left( x\right) $.

\smallskip

It is immediate to check that, since $T$ is isotropic, a subordinated field $%
F\left[ T\right] \left( \cdot \right) $ as in Definition A is necessarily
isotropic. As a consequence, following again \cite{BaMa} or \cite{PePy}, one
deduces that $F\left[ T\right] $ admits the spectral representation
\begin{equation}
F\left[ T\right] \left( x\right) =\sum_{l=0}^{\infty
}\sum_{m=-l}^{l}a_{lm}\left( F\right) Y_{lm}\left( x\right)
=\sum_{l=0}^{\infty }F\left[ T\right] _{l}\left( x\right) \text{, \ \ }x\in
\mathbb{S}^{2}\text{,}  \label{SpecSub}
\end{equation}%
with convergence in $L^{2}\left( \mathbb{P}\right) $ (for fixed $x$) and in $%
L^{2}\left( \Omega \times \mathbb{S}^{2}\text{, }\mathbb{P}\otimes dx\right)
$. Here,
\begin{eqnarray}
a_{lm}\left( F\right) &\triangleq &\int_{\mathbb{S}^{2}}F\left[ T\right]
\left( y\right) \overline{Y_{lm}\left( y\right) }dy\text{,\ and}
\label{subCoeff} \\
F\left[ T\right] _{l}\left( x\right) &\triangleq
&\sum_{m=-l}^{l}a_{lm}\left( F\right) Y_{lm}\left( x\right) .
\label{subProj}
\end{eqnarray}%
The complex-valued array $\left\{ a_{lm}\left( F\right) :l\geq 0,\text{ \ }%
m=-l,...,l\right\} $ always enjoys the following properties (\textbf{a})-(%
\textbf{c}): (\textbf{a}) for every $l\geq 0$, the random variable $%
a_{l0}\left( F\right) $ is real-valued, centered and Gaussian; (\textbf{b})
for every $l\geq 1$, and every $m=1,...,l$, the random variable $%
a_{lm}\left( F\right) $ is complex-valued and such that
\begin{align*}
a_{lm}\left( F\right) & =\left( -1\right) ^{m}\overline{a_{l-m}\left(
F\right) }\text{ \ ; \ }\mathbb{E(}\Re \left( a_{lm}\left( F\right) \right)
\Im \left( a_{lm}\left( F\right) \right) )=0 \\
\mathbb{E(}\Re \left( a_{lm}\left( F\right) \right) ^{2})& =\mathbb{E(}\Im
\left( a_{lm}\left( F\right) \right) ^{2})=\mathbb{E(}a_{l0}\left( F\right)
^{2})/2=C_{l}\left( F\right) /2,
\end{align*}%
where the finite constant $C_{l}\left( F\right) \geq 0$ depends solely on $F$
and $l$; (\textbf{c}) $\mathbb{E(}a_{lm}\left( F\right) $ $\times $ $%
\overline{a_{l^{\prime }m^{\prime }}\left( F\right) })$ $=0$, $\forall
\left( l,m\right) $ $\neq $ $\left( l^{\prime },m^{\prime }\right) $. Note
that, in general, it is no longer true that $\Re \left( a_{lm}\left(
F\right) \right) $ and $\Im \left( a_{lm}\left( F\right) \right) $ are
independent random variables. Moreover, we state the following consequence
of \cite[Th. 7]{BaMa}: \textit{for every }$l\geq 1$, \textit{the coefficients%
} $\left( a_{l0}\left( F\right) ,...,a_{ll}\left( F\right) \right) $ \textit{%
are stochastically independent if, and only if, they are Gaussian. }Also, $%
\mathbb{E(}F\left[ T\right] \left( x\right) ^{2})=\sum_{l=0}^{\infty }\frac{%
2l+1}{4\pi }C_{l}\left( F\right) $.

In the subsequent sections, a crucial role will be played by the class of
\textit{Hermite polynomials}. Recall (see e.g. \cite[p. 20]{Janss}) that the
sequence $\left\{ H_{q}:q\geq 0\right\} $ of Hermite polynomials is defined
by the differential relation
\begin{equation}
H_{q}\left( z\right) =\left( -1\right) ^{q}e^{\frac{z^{2}}{2}}\frac{d^{q}}{%
dz^{q}}e^{-\frac{z^{2}}{2}}\text{, \ \ }z\in \mathbb{R}\text{, \ }q\geq 0;
\label{Her}
\end{equation}%
it is well-known that the sequence $\{ \left( q!\right)
^{-1/2}H_{q}:q\geq 0\} $ defines an orthonormal basis of the space $%
L^{2}(\mathbb{R},\left( 2\pi \right) ^{-1/2}e^{-z^{2}/2}dz)$. When a
subordinated field has the form (for $q\geq 2$) $H_{q}\left[ T\right] \left(
x\right) $, $x\in \mathbb{S}^{2}$ (that is, when $F=H_{q}$ in Definition A),
we will use the shorthand notation:
\begin{eqnarray}
T^{\left( q\right) }\left( x\right) &\triangleq &H_{q}\left[ T\right] \left(
x\right) ,\text{ \ }x\in \mathbb{S}^{2}\text{,}  \label{Short1} \\
a_{lm;q} &\triangleq &a_{lm}\left( H_{q}\right) \text{,}  \label{Short1.5} \\
T_{l}^{\left( q\right) }\left( x\right) &\triangleq &H_{q}\left[ T\right]
_{l}\left( x\right) ,\text{ \ }l\geq 1\text{, }x\in \mathbb{S}^{2}\text{,}
\label{Short2} \\
\overline{T}_{l}^{\left( q\right) }\left( x\right) &\triangleq &Var\left(
T_{l}^{\left( q\right) }\left( x\right) \right) ^{-1/2}T_{l}^{\left(
q\right) }\left( x\right) ,\text{ \ }l\geq 1\text{, }x\in \mathbb{S}^{2}%
\text{,}  \label{Short2.5} \\
\widetilde{C}_{l}^{(q)} &\triangleq &C_{l}\left( H_{q}\right) =\mathbb{E}%
|a_{lm;q}|^{2}\text{, }l\geq 1\text{, }m=-l,...,l\text{.}  \label{Short3}
\end{eqnarray}

To justify our notation (\ref{Short1})--(\ref{Short3}), we recall that for
every fixed $x$ the random variable $H_{q}\left[ T\right] \left( x\right)
=H_{q}\left( T\left( x\right) \right) $ is just the $q$th \textsl{Wick power
}of $T\left( x\right) $ (see for instance \cite{Janss}). We conclude the
section with an easy Lemma, that will be used in Section \ref{S: CLT}.

\begin{lemma}
\label{L : Pl}Let $F\left[ T\right] \left( x\right) $, $x\in \mathbb{S}^{2}$%
, be an (isotropic) subordinated field as in Definition A. Then, for every $%
l\geq 1$ one has the following:

\begin{enumerate}
\item The random field $x\mapsto F\left[ T\right] _{l}\left( x\right) $
defined in (\ref{subProj}) is real-valued and isotropic;

\item For every fixed $x\in \mathbb{S}^{2}$, $F\left[ T\right] _{l}\left(
x\right) $ $\overset{law}{=}$ $\sqrt{\frac{2l+1}{4\pi }}a_{l0}\left(
F\right) $, where the coefficient $a_{l0}\left( F\right) $ is defined
according to (\ref{subCoeff}), and consequently $\mathbb{E(}F\left[ T\right]
_{l}\left( x\right) ^{2})$ $=$ $\frac{2l+1}{4\pi }C_{l}\left( F\right) $;

\item The normalized random field
\begin{equation}
\overline{F\left[ T\right] }_{l}\left( x\right) =\left[ \frac{\left(
2l+1\right) C_{l}\left( F\right) }{4\pi }\right] ^{-1/2}F\left[ T\right]
_{l}\left( x\right)  \label{SubNorm}
\end{equation}%
has a covariance structure given by: for every $x,y\in \mathbb{S}^{2}$,%
\begin{equation}
\mathbb{E}\left( \overline{F\left[ T\right] }_{l}\left( x\right) \times
\overline{F\left[ T\right] }_{l}\left( x\right) \right) =P_{l}\left( \cos
\left\langle x,y\right\rangle \right) \text{,}  \label{covSubNorm}
\end{equation}%
where $P_{l}\left( \cdot \right) $ is the $l$\textrm{th} Legendre polynomial
defined in (\ref{Legendre}) and, as before, $\left\langle x,y\right\rangle $
is the angle between $x$ and $y$.
\end{enumerate}
\end{lemma}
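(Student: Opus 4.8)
The plan is to prove the three assertions in the order stated, since point 2 uses point 1 and point 3 is self-contained but shares the same ingredients. For point 1, I would first settle real-valuedness by exploiting the two conjugation symmetries at hand: from (\ref{IntroSH2}) one has $\overline{Y_{lm}(x)}=(-1)^{m}Y_{l,-m}(x)$, while property (\textbf{b}) following (\ref{subProj}) gives $\overline{a_{lm}(F)}=(-1)^{m}a_{l,-m}(F)$. Conjugating the defining sum (\ref{subProj}) and substituting both relations, the factors $(-1)^{m}$ cancel in pairs; the reindexing $m\mapsto -m$ then returns the sum to $F\left[T\right]_{l}(x)$, so $\overline{F\left[T\right]_{l}(x)}=F\left[T\right]_{l}(x)$. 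For isotropy I would argue that the degree-$l$ projection commutes with the $SO(3)$-action: the span of $\{Y_{lm}:m=-l,\dots,l\}$ is the eigenspace of the (rotation-invariant) Laplace--Beltrami operator on $\mathbb{S}^{2}$ with eigenvalue $-l(l+1)$, hence is $SO(3)$-invariant, so the orthogonal projection $\Pi_{l}$ onto the $l$th frequency commutes with the rotation operator $(R_{g}f)(x)=f(g^{-1}x)$. Therefore $\Pi_{l}(F\left[T\right]\circ g)=(\Pi_{l}F\left[T\right])\circ g=F\left[T\right]_{l}\circ g$, and applying $\Pi_{l}$ to the identity in law $F\left[T\right]\circ g\overset{law}{=}F\left[T\right]$ (valid because $F\left[T\right]$ is isotropic, as noted right after Definition A) yields $F\left[T\right]_{l}\circ g\overset{law}{=}F\left[T\right]_{l}$.

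For point 2 I would use the isotropy just established to reduce to a single convenient point. Isotropy forces the one-dimensional marginals of $F\left[T\right]_{l}$ to be independent of $x$, so it suffices to evaluate the field at the north pole $N$ (spherical coordinates $\theta=0$). There the explicit formulas (\ref{IntroSH1})--(\ref{Legendre}) give $Y_{lm}(N)=0$ for $m\neq 0$ (the factor $(1-x^{2})^{m/2}$ in $P_{lm}$ vanishes at $x=1$ when $m\geq 1$) and $Y_{l0}(N)=\sqrt{(2l+1)/4\pi}$ (since $P_{l}(1)=1$). Hence the sum (\ref{subProj}) collapses to the single term $F\left[T\right]_{l}(N)=\sqrt{(2l+1)/4\pi}\,a_{l0}(F)$, and by isotropy $F\left[T\right]_{l}(x)\overset{law}{=}\sqrt{(2l+1)/4\pi}\,a_{l0}(F)$ for every $x$. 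Squaring and invoking $\mathbb{E}(a_{l0}(F)^{2})=C_{l}(F)$ from property (\textbf{b}) gives the claimed variance.

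For point 3 I would compute the covariance directly. Since the field is real by point 1, I write $\mathbb{E}(F\left[T\right]_{l}(x)F\left[T\right]_{l}(y))=\mathbb{E}(F\left[T\right]_{l}(x)\overline{F\left[T\right]_{l}(y)})$ and expand via (\ref{subProj}) into $\sum_{m,m'}\mathbb{E}(a_{lm}(F)\overline{a_{lm'}(F)})\,Y_{lm}(x)\overline{Y_{lm'}(y)}$. The orthogonality in property (\textbf{c}), together with $\mathbb{E}|a_{lm}(F)|^{2}=C_{l}(F)$, collapses the double sum to $C_{l}(F)\sum_{m}Y_{lm}(x)\overline{Y_{lm}(y)}$, which by the addition formula (\ref{angle}) equals $C_{l}(F)\,\frac{2l+1}{4\pi}P_{l}(\cos\langle x,y\rangle)$. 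Dividing by the normalizing constant in (\ref{SubNorm}) cancels the prefactor $\frac{(2l+1)C_{l}(F)}{4\pi}$ exactly, leaving $P_{l}(\cos\langle x,y\rangle)$ as required by (\ref{covSubNorm}).

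The main obstacle is the rigorous justification of isotropy in point 1, on which everything downstream rests: the reduction to the north pole in point 2 and the clean cancellation in point 3 both presuppose that the frequency projection is rotation-equivariant. Once the $SO(3)$-invariance of the degree-$l$ eigenspace is in hand — equivalently, the rule that the harmonics of fixed degree transform among themselves, $Y_{lm}(gx)=\sum_{m'}D^{l}_{m'm}(g)Y_{lm'}(x)$ with the Wigner matrices $D^{l}$ of Section \ref{S : Clebsch-Gordan} — the remaining manipulations of the spectral expansion and the addition theorem are routine.
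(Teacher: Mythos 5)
Your proof is correct and follows essentially the same route as the paper's: point 2 by reducing to the pole $x_{0}=(0,0)$ via isotropy and the collapse $Y_{lm}(x_{0})=\sqrt{(2l+1)/4\pi}\,\delta_{m}^{0}$, and point 3 by expanding the covariance and invoking the addition formula (\ref{angle}). The only difference is that you supply the details for point 1 (real-valuedness from the conjugation symmetries, isotropy from the rotation-equivariance of the degree-$l$ projection), which the paper dismisses as straightforward; your argument there is sound.
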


\begin{proof}
Point 1. is straightforward. To prove point 2. define (in polar coordinates)
$x_{0}=\left( 0,0\right) $ and use the isotropy property stated at point 1.
to write
\begin{equation*}
F\left[ T\right] _{l}\left( x\right) \overset{law}{=}F\left[ T\right]
_{l}\left( x_{0}\right) =\sum_{m=-l}^{l}a_{lm}\left( F\right) Y_{lm}\left(
x_{0}\right) =\sqrt{\frac{2l+1}{4\pi }}a_{l0}\left( F\right) \text{,}
\end{equation*}%
since (\ref{IntroSH1}) implies that $Y_{lm}\left( x_{0}\right) =\sqrt{\left(
2l+1\right) /4\pi }\delta _{m}^{0}$. Finally, to prove relation (\ref%
{covSubNorm}) we use (\ref{angle}) to deduce that, for every $x,y\in \mathbb{%
S}^{2}$,%
\begin{equation*}
\mathbb{E}(F\left[ T\right] _{l}\left( x\right) F\left[ T\right]
_{l}\left( y\right) ) =C_{l}\left( F\right) \frac{2l+1}{4\pi
}P_{l}(\cos \left\langle x,y\right\rangle )\text{,}
\end{equation*}%
thus giving the desired conclusion (recall that $P_{l}\left( 1\right) =1$).
\end{proof}

For instance, a first consequence of Lemma \ref{L : Pl} is that, for every $%
q\geq 2$,
\begin{equation}
\mathbb{E(}T_{l}^{\left( q\right) }\left( x\right) ^{2})=\left( 2l+1\right)
\widetilde{C}_{l}^{\left( q\right) }/4\pi  \label{tq}
\end{equation}%
where we used the notation introduced at (\ref{Short1})-(\ref{Short3}), so
that $\overline{T}_{l}^{\left( q\right) }\left( x\right) $ $=$ $[\left(
2l+1\right) \widetilde{C}_{l}^{(q)}/4\pi ]^{-1/2}$ $T_{l}^{\left( q\right)
}\left( x\right)$.

The main aim of the subsequent sections is to provide an accurate solution
to the following problems (\textbf{P-I})--(\textbf{P-III}).

\smallskip

\noindent%
(\textbf{P-I}) For a fixed $q\geq 2$, find conditions on the power spectrum $%
\left\{ C_{l}:l\geq 0\right\} $ of $T$, to have that the subordinated
process $T^{\left( q\right) }=\left\{ T^{\left( q\right) }\left( x\right)
:x\in \mathbb{S}^{2}\right\} $ defined in (\ref{Short1}) is such that, for
every $x\in \mathbb{S}^{2}$,%
\begin{equation}
\sqrt{\left( 2l+1\right) \widetilde{C}_{l}^{\left( q\right) }/4\pi }\times
T_{l}^{\left( q\right) }\left( x\right) \underset{l\rightarrow +\infty }{%
\overset{law}{\longrightarrow }}N\text{,}  \label{CLT1}
\end{equation}%
where $N\ $is a centered standard Gaussian random variable.

\noindent%
(\textbf{P-II}) Under the conditions found at (\textbf{P-I}), study the
asymptotic behaviour, as $l\rightarrow +\infty $, of the vector
\begin{equation}
\sqrt{\left( 2l+1\right) \widetilde{C}_{l}^{\left( q\right) }/4\pi }\times
\left( T_{l}^{\left( q\right) }\left( x_{1}\right) ,...,T_{l}^{\left(
q\right) }\left( x_{k}\right) \right) \text{,}  \label{vector}
\end{equation}%
for every $x_{1},...,x_{k}\in \mathbb{S}^{2}$.

\noindent%
(\textbf{P-III}) Combine (\textbf{P-I}) and (\textbf{P-II}) to study the
asymptotic behaviour (in particular, the asymptotic Gaussianity), as $%
l\rightarrow +\infty $, of vectors of the type
\begin{equation}
\sqrt{\left( 2l+1\right) C_{l}\left( F\right) /4\pi }\times \left( F\left[ T%
\right] _{l}\left( x_{1}\right) ,...,F\left[ T\right] _{l}\left(
x_{k}\right) \right) \text{,}  \label{vector2}
\end{equation}%
for every $x_{1},...,x_{k}\in \mathbb{S}^{2}$ and every $F\in
L_{0}^{2}\left( \mathbb{R},e^{-z^{2}/2}dz\right) $.

\smallskip

Note that Problems (\textbf{P-I})-(\textbf{P-III}) are stated in increasing
order of generality. We observe also the following fact: since (\ref%
{covSubNorm}) holds, and since the limit of $P_{l}\left( \left\langle
x,y\right\rangle \right) $ ($l\rightarrow +\infty $) does not exist in
general, it will not be possible to prove that the vectors in (\ref{vector})
and (\ref{vector2}) converge in law to some Gaussian limit. However, by
using the results developed in \cite{Pesco}, we will be able to establish
conditions under which the laws of such vectors are \textquotedblleft
asymptotically close\textquotedblright\ to a sequence of $k$-dimensional
Gaussian distributions. As already mentioned, to study (\textbf{P-I})--(%
\textbf{P-III}) we shall use estimates involving the so-called \textsl{%
Clebsch-Gordan }coefficients, that are elements of unitary matrices
connecting some reducible representations of $SO\left( 3\right) $. The
definition and the analysis of some crucial properties of Clebsch-Gordan
coefficients are the object of the next section.

\section{A primer on Clebsch-Gordan coefficients\label{S : Clebsch-Gordan}}

In this subsection, we need to review some basic representation theory
results for $SO(3)$, the group of rotations in $\mathbb{R}^{3}$. We refer
the reader to standard textbooks (for instance, \cite{VMK} and \cite{VilKly}%
) for further details, as well as for any unexplained notion or definition.
It should be stressed that most of our arguments below could be extended to
general compact groups with known representations; however, throughout the
following we shall stick to the group of rotations $SO(3)$, mainly for the
sake of notational simplicity.

We recall first that each element $g\in SO(3)$ can be parametrized by the
set $(\alpha ,\beta ,\gamma )$ of so-called \textsl{Euler angles}, where $%
0\leq \alpha <2\pi ,$ $0\leq \beta \leq \pi $ and $0\leq \gamma <2\pi $. In
these coordinates, a complete set of irreducible matrix representations for $%
SO(3)$ is provided by the so-called \textsl{Wigner's }$D$\textsl{\ matrices}
$D^{l}(\alpha ,\beta ,\gamma )$, of dimensions ($2l+1)\times (2l+1)$ for $%
l=0,1,2,...$ -- see \cite[Ch. 4]{VMK} for an analytic expression.
Here, we simply point out that the elements of $D^{l}(\alpha
,\beta ,\gamma )$ are related to the spherical harmonics by the
relationship
\begin{equation}
D_{m0}^{l}(\alpha ,\beta ,\gamma )=(-1)^{m}\sqrt{\frac{4\pi }{2l+1}}%
Y_{l-m}(\beta ,\alpha )=\sqrt{\frac{4\pi }{2l+1}}Y_{lm}^{\ast }(\beta
,\alpha )\text{ ,}  \label{spherwig}
\end{equation}%
from which it is not difficult to show how the usual spectral representation
for random fields on the spheres (for instance (\ref{specrap}) and (\ref%
{spectral-G})) is really just the stochastic Peter-Weyl Theorem on $\mathbb{S%
}^{2}=SO(3)/SO(2)$. The reader is referred e.g. to \cite{VilKly} and \cite%
{Varadara} for further discussions on the Peter-Weyl Theorem, and to \cite%
{BaMa}, \cite{BaMaVa} and \cite{PePy} for several related probabilistic
results.

It follows from standard representation theory that we can exploit
the family $\{ D^{l}\} _{l=0,1,,2,...}$ to build alternative
(reducible) representations, either by taking the tensor product family $%
\{ D^{l_{1}}\otimes D^{l_{2}}\} _{l_{1},l_{2}}$, or by considering
direct sums $\{ \oplus _{l=|l_{2}-l_{1}|}^{l_{2}+l_{1}}D^{l}\}
_{l_{1},l_{2}}$; these
representations have dimensions $(2l_{1}+1)(2l_{2}+1)$ $\times $ $%
(2l_{1}+1)(2l_{2}+1)$ and are unitarily equivalent, whence there exists a
unitary matrix $C_{l_{1}l_{2}}$ such that%
\begin{equation}
\left\{ D^{l_{1}}\otimes D^{l_{2}}\right\} =C_{l_{1}l_{2}}\left\{ \oplus
_{l=|l_{2}-l_{1}|}^{l_{2}+l_{1}}D^{l}\right\} C_{l_{1}l_{2}}^{\ast }\text{.}
\label{clebun}
\end{equation}%
Here, $C_{l_{1}l_{2}}$ is a $\left\{ (2l_{1}+1)(2l_{2}+1)\times
(2l_{1}+1)(2l_{2}+1)\right\} $ block matrix with blocks $%
C_{l_{1}(m_{1})l_{2}}^{l}$ of dimensions $(2l_{2}+1)\times (2l+1)$, $%
m_{1}=-l_{1},...,l_{1}$. The elements of such a block are indexed by $m_{2}$
(over rows) and $m$ (over columns). More precisely,%
\begin{eqnarray*}
C_{l_{1}l_{2}} &=&\left[ C_{l_{1}(m_{1})l_{2}\cdot }^{l\cdot }\right]
_{m_{1}=-l_{1},...,l_{1};l=|l_{2}-l_{1}|,...,l_{2}+l_{1}} \\
C_{l_{1}(m_{1})l_{2}.}^{l.} &=&\left\{ C_{l_{1}m_{1}l_{2}m_{2}}^{lm}\right\}
_{m_{2}=-l_{2},...,l_{2};m=-l,...,l}\text{ .}
\end{eqnarray*}

The \textsl{Clebsch-Gordan coefficients} for $SO(3)$ are then defined as $%
\{\!C_{l_{1}m_{1}l_{2}m_{2}}^{lm}\!\} $, that is, as the elements
of the unitary matrices $C_{l_{1}l_{2}}$ (note that such matrices
are real-valued, and so are the $C_{l_{1}m_{1}l_{2}m_{2}}^{lm}$).
These coefficients were introduced in Mathematics in the XIX
century, as motivated by the analysis of invariants in Algebraic
Geometry; in the 20th century, they have gained
an enormous importance in the quantum theory of angular momentum, where $%
C_{l_{1}m_{1}l_{2}m_{2}}^{lm}$ represents the \textsl{probability amplitude}
that two quantum particles with total angular momentum $l_{1}$ and $l_{2}$
and momentum projections on the $z$-axis $m_{1}$ and $m_{2}$ are coupled to
form a system with total angular momentum $l$ and projection $m$ (see e.g.
\cite{Libo}). Their use in the analysis of isotropic random fields is much
more recent, see for instance \cite{Hu} and the references therein. Explicit
expressions for the Clebsch-Gordan coefficients of $SO(3)$ are known, but
they are in general hardly manageable (see e.g. \cite[Section 8.2]{VMK}).
However, these expressions become somewhat neater when $m_{1}=m_{2}=m_{3}=0$%
, in which case one has the relations: $C_{l_{1}0l_{2}0}^{l_{3}0}=0$, when $%
l_{1}+$ $l_{2}+$ $l_{3}$ is odd, and, for $l_{1}+l_{2}+l_{3}$ even,%
\begin{eqnarray*}
C_{l_{1}0l_{2}0}^{l_{3}0} &=&\frac{(-1)^{\frac{l_{1}+l_{2}-l_{3}}{2}}\sqrt{%
2l_{3}+1}\left[ (l_{1}+l_{2}+l_{3})/2\right] !}{\left[ (l_{1}+l_{2}-l_{3})/2%
\right] !\left[ (l_{1}-l_{2}+l_{3})/2\right] !\left[ (-l_{1}+l_{2}+l_{3})/2%
\right] !} \\
&&\times \left\{ \frac{%
(l_{1}+l_{2}-l_{3})!(l_{1}-l_{2}+l_{3})!(-l_{1}+l_{2}+l_{3})!}{%
(l_{1}+l_{2}+l_{3}+1)!}\right\} ^{1/2}.
\end{eqnarray*}

The Clebsch-Gordan coefficients enjoy also a nice set of symmetry and
orthogonality properties which will play a crucial role in our results to
follow (see \cite{Marinucci} and \cite{MarPTRF} for an account of such properties).
Note in particular that the Clebsch-Gordan coefficients are
different from zero only if $m_{1}+m_{2}=m$ and $|l_{2}-l_{1}|\leq l\leq
l_{1}+l_{2}$ (the \emph{triangle conditions})$.$ Also, from unitary
equivalence we deduce that%
\begin{equation}
\sum_{m_{1},m_{2}}\!
C_{l_{1}m_{1}l_{2}m_{2}}^{lm}\!C_{l_{1}m_{1}l_{2}m_{2}}^{l^{\prime
}m^{\prime }} \!=\! \delta _{l}^{l^{\prime} }\!\delta _{m}^{m^{\prime} }%
\text{ and }\sum_{l,m}\!C_{l_{1}m_{1}l_{2}m_{2}}^{lm}\!C_{l_{1}m_{1}^{\prime
}l_{2}m_{2}^{\prime }}^{lm}\!=\!\delta _{m_{1}}^{m_{1}^{\prime }}\delta
_{m_{2}}^{m_{2}^{\prime}}\!.  \label{orto1}
\end{equation}

\textbf{Remark on Notation. }Depending on the notational convenience, we
write sometimes sums of Clebsch-Gordan coefficients without specifying the
range of the indices $l$ and/or $m$. In such cases, the range of the sums is
conventionally taken to be the set of indices where the Clebsch-Gordan
coefficients are different from zero. For instance, in (\ref{orto1}) one
should read: $\sum_{m_{1},m_{2}}=\sum_{m_{1}=-l_{1},...,l_{1}}%
\sum_{m_{2}=-l_{2},...,l_{2}}$ and $\sum_{l,m}=\sum_{l=0}^{+\infty
}\sum_{m=-l,...,l}$. Similar conventions are adopted (without further
notice) throughout the paper. We recall also that the Clebsch-Gordan
coefficients are equivalent, up to a normalization factor, to the \emph{%
Wigner's} \emph{3j coefficients, }which are used\emph{\ }in
related works such as \cite{MarPTRF}.

The Clebsch-Gordan coefficients play a crucial role in the evaluation of
integrals involving products of spherical harmonics. In particular, the
so-called \textsl{Gaunt integral} gives%
\begin{equation}
\int_{\mathbb{S}^{2}}\!Y_{l_{1}m_{1}}\left( x\right)\! Y_{l_{2}m_{2}}\left(
x\right)\! \overline{Y_{lm}\left( x\right) }dx \!=\!\sqrt{\frac{\left(
2l_{1}\!+\!1\right)\! \left( 2l_{2}\!+\!1\right) }{4\pi \left( 2l+1\right) }}%
C_{l_{1}m_{1}l_{2}m_{2}}^{lm}C_{l_{1}0l_{2}0}^{l0}\text{ }.  \label{gauint}
\end{equation}%
Relation (\ref{gauint}) can be established using (\ref{spherwig}), (\ref%
{clebun}) and resorting to standard orthonormality properties of
the elements of group representations -- see \cite[Expression
5.9.1.4]{VMK}. More generally, define
\begin{equation}
\mathcal{G}\left\{ l_{1},m_{1};...;l_{r},m_{r}\right\} \triangleq \int_{%
\mathbb{S}^{2}}Y_{l_{1}m_{1}}\left( x\right) \cdot \cdot \cdot
Y_{l_{r}m_{r}}\left( x\right) dx\text{,}  \label{supergaunt}
\end{equation}%
and call the quantity $\mathcal{G}\left\{
l_{1},m_{1};...;l_{r},m_{r}\right\} $ a \textsl{generalized Gaunt integral}$%
. $ Then, iterating the previous argument, for $q\geq 3$ it can be shown
that (by using for instance \cite[Expression 5.6.2.12]{VMK})%
\begin{eqnarray}
&&\quad \quad \mathcal{G}\left\{ l_{1},m_{1};...;l_{q},m_{q};l,-m\right\}
\label{megagaunt} \\
&=&\sum_{L_{1}...L_{q-2}}\sum_{M_{1}...M_{q-2}}\left\{
\prod_{i=1}^{q-3}\left( \sqrt{\frac{2l_{i+2}+1}{4\pi }}%
C_{L_{i}0l_{i+2}0}^{L_{i+1}0}C_{L_{i}M_{i}l_{i+2}m_{i+2}}^{L_{i+1}M_{i+1}}%
\right) \right\}  \notag \\
&&\times \sqrt{\frac{(2l_{1}+1)(2l_{2}+1)}{4\pi (2l+1)}}%
C_{l_{1}0l_{2}0}^{L_{1}0}C_{l_{1}m_{1}l_{2}m_{2}}^{L_{1}M_{1}}
\!\sqrt{\frac{2l_{q}+1}{4\pi }}
C_{L_{q-2}0l_{q}0}^{l0}C_{L_{q-2}M_{q-2}l_{q}m_{q}}^{lm} \text{,}
\notag
\end{eqnarray}%
where, for $q=3$, we have used the convention
$\Pi_{i=1}^{0}\equiv0$. Note that expressions such as
(\ref{megagaunt}) imply that the generalized Gaunt integrals of
the type (\ref{supergaunt}) are indeed real-valued. To
simplify the expression (\ref{megagaunt}), let us introduce the coefficients%
\begin{equation*}
C_{l_{1},m_{1};...;l_{p}m_{p}}^{\lambda _{1},\lambda _{2},...,\lambda
_{p-1};\mu }\!\triangleq \!\sum_{\mu _{1}=-\lambda _{1}}^{\lambda
_{1}}\!...\!\sum_{\mu _{p-2}=-\lambda _{p-2}}^{\lambda
_{p-2}}C_{l_{1},m_{1},l_{2},m_{2}}^{\lambda _{1},\mu _{1}}C_{\lambda
_{1},\mu _{1};l_{3},m_{3}}^{\lambda _{2},\mu _{2}}\cdot \cdot \cdot
C_{\lambda _{p-2},\mu _{p-2};l_{p},m_{p}}^{\lambda _{p-1},\mu }.
\end{equation*}

These coefficients are themselves the elements of unitary matrices
connecting tensor product and direct sum representations of $SO(3)$, and
thus it follows easily that the following orthonormality conditions hold
\begin{equation}
\sum_{m_{1},...m_{p}}\left\{ C_{l_{1},m_{1};...;l_{p}m_{p}}^{\lambda
_{1},\lambda _{2},...,\lambda _{p-1};\mu }\right\} ^{2}=\sum_{\lambda
_{1}}...\sum_{\lambda _{p-1}}\sum_{\mu =-\lambda _{p-1}}^{\lambda
_{p-1}}\left\{ C_{l_{1},m_{1};...;l_{p}m_{p}}^{\lambda _{1},\lambda
_{2},...,\lambda _{p-1};\mu }\right\} ^{2}=1\text{ ;}  \label{ortoorto}
\end{equation}%
it is important to note that due to the conditions $m_{1}+m_{2}=m_{3}$ the
sums may actually vanish, for instance%
\begin{equation}
C_{l_{1},0;...;l_{p}0}^{\lambda _{1},\lambda _{2},...,\lambda
_{p-1};0}=C_{l_{1},0,l_{2},0}^{\lambda _{1},0}C_{\lambda
_{1},0;l_{3},0}^{\lambda _{2},0}\cdot \cdot \cdot C_{\lambda
_{p-2},0;l_{p},0}^{\lambda _{p-1},0}\text{ .}  \label{0_conv}
\end{equation}

We have also that
\begin{eqnarray}
&&\mathcal{G}\left\{ l_{1},m_{1};...;l_{q},m_{q};l,-m\right\}
\label{gagaunt} \\
&=&\sqrt{\frac{4\pi }{2l+1}}\left\{ \prod_{i=1}^{q}\sqrt{\frac{2l_{i}+1}{%
4\pi }}\right\}
\sum_{L_{1}...L_{q-2}}C_{l_{1},0;...;l_{q}0}^{L_{1},L_{2},...,L_{q-2},l;0}C_{l_{1},m_{1};...;l_{q}m_{q}}^{L_{1},L_{2},...,L_{q-2},l;m}.
\notag
\end{eqnarray}

\smallskip

\textbf{Remark. }The coefficients $C_{l_{1},m_{1};...;l_{p}m_{p}}^{\lambda
_{1},\lambda _{2},...,\lambda _{p-1};\mu }$ defined above admit a physical
interpretation in terms of coupling of angular momenta in a quantum
mechanical system. Consider indeed a system composed of $p$ particles, say $%
\alpha _{1},...,\alpha _{p}$, such that $\alpha _{i}$ has total angular
momentum equal to $l_{i}$, and projection on the $z$-axis given by $m_{i}$.
Then, the coefficient $C_{l_{1},m_{1};...;l_{p}m_{p}}^{\lambda _{1},\lambda
_{2},...,\lambda _{p-1};\mu }$ is exactly the probability amplitude of the
intersection of the following $p-1$ events $\mathbf{E}_{1}$,..., $\mathbf{E}%
_{p-1}$:

\noindent $\mathbf{E}_{1}$ $=$ $\{\alpha _{1}$ and $\alpha _{2}$ couple to
form a particle $\eta _{1}$ with total angular momentum $\lambda _{1}\}$, $%
\mathbf{E}_{2}$ $=$ $\{\eta _{1}$ couples with $\alpha _{3}$ to form a
particle $\eta _{2}$ with total angular momentum $\lambda _{2}\}$,...,
\textbf{E}$_{i}=$ $\{\eta _{i-1}$ couples with $\alpha _{i+1}$ to form a
particle $\eta _{i}$ with total angular momentum $\lambda _{2}\}$,...,
\textbf{E}$_{p-1}=$ $\{\eta _{p-2}$ couples with $\alpha _{p}$ to form a
particle with total angular momentum $\lambda _{p-1}$ and projection $\mu $
on the $z$-axis$\}$.

\smallskip

In the sequel, we shall also need the so-called \textsl{Wigner }$6j$\textsl{%
\ (or Racah) coefficients}, which are related to the Clebsch-Gordan by the
identity (see (\cite[Eq. 9.1.1.8]{VMK}))%
\begin{equation}
\left\{
\begin{array}{ccc}
\!l_{1}\! & l_{2}\! & l_{3}\! \\
\!l_{4}\! & l_{5}\! & l_{6}\!%
\end{array}%
\right\} \!=\!K\!\left(l_{1},...,\!l_{6}\!\right)\! \sum_{\substack{ %
m_{1}m_{3}  \\ m_{4}m_{6}}}\!C_{l_{1}m_{1}l_{2}m_{2}}^{l_{3}m_{3}}%
\!C_{l_{1}m_{1}l_{6}m_{6}}^{l_{5}m_{5}}
\!C_{l_{3}m_{3}l_{4}m_{4}}^{l_{5}m_{5}}%
\!C_{l_{2}m_{2}l_{4}m_{4}}^{l_{6}m_{6}}\!.  \label{wig6j}
\end{equation}%
where $K\left( l_{1},...,l_{6}\right) =\left[ (2l_{3}+1)(2l_{6}+1)\right]
^{-1/2}(-1)^{l_{1}+l_{2}+l_{4}+l_{5}}$ (note that the previous sum does not
involve $m_{2}$ and $m_{5}$, because of the general relation: $C_{\alpha
t_{1}\beta t_{2}}^{\gamma t_{3}}=0$, whenever $t_{3}\neq t_{1}+t_{2}$).
Although the Wigner's 6j coefficients play themselves a very important role
in Quantum Mechanics and Representation Theory, for brevity's sake we avoid
a full discussion on their properties; the interested reader can consult (%
\cite[Ch.9]{VMK}) or (\cite[pp. 529-542]{VilKly}).

\section{High-frequency CLTs: conditions in terms of Gaunt integrals\label%
{S: CLT}}

The aim of this section is to obtain conditions for high-frequency CLTs in
terms of Gaunt integrals of the type (\ref{megagaunt}). We start by
focussing on Hermite polynomials, and then we deal with general subordinated
fields.

\subsection{Hermite subordination\label{SS : Hermite}}

We focus on the spherical field $T^{\left( q\right) }$ ($q\geq 2$) defined
in (\ref{Short1}), which is obtained by composing the Gaussian field $T$ in (%
\ref{specrap}) with the $q$th Hermite polynomial $H_{q}$ (or, equivalently,
by taking the $q$th Wick power of the random variable $T\left( x\right) $
for every $x$). Our first purpose is to characterize the asymptotic
Gaussianity (when $l\rightarrow +\infty $) of the spherical harmonic
coefficients $\left\{ a_{lm;q}\right\} $ defined in (\ref{Short1.5}).

\begin{theorem}
\label{teo1}Fix $q\geq 2$.

\textrm{1.} For every $l\geq 1$, the positive constant $\widetilde{C}%
_{l}^{\left( q\right) }$ in (\ref{Short3}) (which does not depend on $m$)
equals the quantity%
\begin{eqnarray}
&&q!\!\sum_{l_{1},m_{1}}\!\cdot \!\cdot \!\cdot
\!\sum_{l_{q},m_{q}}C_{l_{1}}C_{l_{2}}\cdot \cdot \cdot C_{l_{q}}\left\vert
\mathcal{G}\left\{ l_{1},m_{1};...;l_{q},m_{q};l,-m\right\} \right\vert ^{2}
\label{VAR} \\
&&= \!q!\!\sum_{l_{1},...,l_{q}=0}^{\infty }C_{l_{1}}\!\cdot
\!\cdot \!\cdot
\!C_{l_{q}}\frac{4\pi }{2l+1}\left\{ \prod_{i=1}^{q}\frac{2l_{i}+1}{4\pi }%
\right\} \sum_{L_{1}...L_{q-2}}\left\{
C_{l_{1},0;...;l_{q}0}^{L_{1},L_{2},...,L_{q-2},l;0}\right\} ^{2}
\label{VAR2}
\end{eqnarray}%
for every $m=-l,...,l$, where the (generalized) Gaunt integral $\mathcal{G}%
\left\{ \cdot \right\} $ is defined via (\ref{supergaunt}).

\textrm{2.} Fix $m\neq 0$. As $l\rightarrow +\infty $, the following two
conditions (\textbf{A}) and (\textbf{B}) are equivalent: (\textbf{A})%
\begin{equation}
(\widetilde{C}_{l}^{\left( q\right) })^{-1/2}\times a_{lm;q}\overset{law}{%
\rightarrow }N+iN^{\prime }\text{,}  \label{as0}
\end{equation}%
where $N,N^{\prime }\sim \mathcal{N}\left( 0,1/2\right) $ are independent; (%
\textbf{B}) for every $p=\frac{q-1}{2}+1,...,q-1$, if $q-1$ is even, and
every $p=q/2,...,q-1$ if $q-1$ is odd%
\begin{eqnarray}
&&(\widetilde{C}_{l}^{\left( q\right) })^{-2}\sum_{n_{1},j_{1}}\cdot \cdot
\cdot \sum_{n_{2\left( q-p\right) },j_{2\left( q-p\right) }}C_{j_{1}}\cdot
\cdot \cdot C_{j_{2\left( q-p\right) }}\left\vert \sum_{l_{1},m_{1}}\cdot
\cdot \cdot \sum_{l_{p},m_{p}}C_{l_{1}}\cdot \cdot \cdot C_{l_{p}}\right.
\notag \\
&&\text{ \ \ }\times \!\mathcal{G}\!\left\{
l_{1},m_{1};...;l_{p},m_{p};j_{1},n_{1};...;j_{q-p},n_{q-p};l,-m\right\}
\times  \label{as11} \\
&&\text{ \ \ }\left. \times \!\mathcal{G}\!\left\{
l_{1},m_{1};...;l_{p},m_{p};j_{q-p+1},n_{q-p+1};...;j_{2\left( q-p\right)
},n_{2\left( q-p\right) };l,-m\right\} ^{^{^{^{^{^{{}}}}}}}\!\right\vert
^{2} \! \rightarrow 0 \!  \notag
\end{eqnarray}

\textrm{3.} Let $N$ be a centered Gaussian random variable with unitary
variance. As $l\rightarrow +\infty $, the CLT
\begin{equation}
(\widetilde{C}_{l}^{\left( q\right) })^{-1/2}\times a_{l0;q}\overset{law}{%
\rightarrow }N  \label{as00}
\end{equation}%
takes place if, and only if, the asymptotic condition (\ref{as11}) holds for
$m=0$ and for every $p=\frac{q-1}{2}+1,...,q-1$, if $q-1$ is even, and every
$p=q/2,...,q-1$ if $q-1$ is odd.
\end{theorem}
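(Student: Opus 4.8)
The plan is to recognize that each coefficient $a_{lm;q}$ is a multiple Wiener-It\^{o} integral of order $q$ with respect to the Gaussian field $T$, and then to apply the criterion of Nualart-Peccati type established in \cite{Pesco}. Indeed, since $T^{(q)}(x)=H_q(T(x))$ is the $q$th Wick power, the coefficient $a_{lm;q}=\int_{\mathbb{S}^2}H_q(T(y))\overline{Y_{lm}(y)}\,dy$ belongs to the $q$th Wiener chaos associated with $T$. This places us squarely in the framework where weak convergence to a Gaussian limit is controlled by the behaviour of the fourth cumulant (equivalently, of the contracted kernels). First I would write $a_{lm;q}$ explicitly as $I_q(f_{lm})$ for an appropriate symmetric kernel $f_{lm}$ on $L^2$ of the underlying Gaussian space, using the spectral representation (\ref{spectral-G}) of $T$ together with the product formula for Hermite polynomials; the kernel is built from the spherical harmonics and the expansion of $T$ in the $a_{lm;1}$.

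For Part 1, the computation of $\widetilde{C}_l^{(q)}=\mathbb{E}|a_{lm;q}|^2$ is the isometry property of multiple integrals. Expanding $H_q(T(y))$ via the diagram/Wick formalism and using the independence and covariance structure of the $a_{lm;1}$ from Proposition \ref{P : BaMa}, one obtains a sum over the indices $(l_1,m_1;\dots;l_q,m_q)$ weighted by $C_{l_1}\cdots C_{l_q}$ and by the squared generalized Gaunt integral $\mathcal{G}\{l_1,m_1;\dots;l_q,m_q;l,-m\}$, which is exactly (\ref{VAR}); the combinatorial factor $q!$ arises from the number of pairings. The second expression (\ref{VAR2}) then follows by substituting the Clebsch-Gordan factorization (\ref{gagaunt}) of the Gaunt integral and applying the orthonormality relation (\ref{ortoorto}) to collapse the sums over the magnetic quantum numbers $m_1,\dots,m_q$, leaving only the ``all-zero'' coupling coefficients $C^{L_1,\dots,L_{q-2},l;0}_{l_1,0;\dots;l_q 0}$ squared.

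For Parts 2 and 3, the heart is the equivalence between the one-dimensional CLT and the convergence to zero of the contraction norms. By the main criterion of \cite{Pesco}, since $\widetilde{C}_l^{(q)}$ normalizes the variance to one, the normalized sequence $(\widetilde{C}_l^{(q)})^{-1/2}a_{lm;q}$ converges in law to the stated Gaussian limit if and only if all the intermediate contractions $\|f_{lm}\otimes_p f_{lm}\|^2$ (for $p=1,\dots,q-1$) tend to zero. I would then compute each contraction norm explicitly: pairing $p$ of the $q$ arguments of one copy of the kernel with $p$ arguments of the other and integrating produces precisely the double Gaunt-integral structure of (\ref{as11}), with the contracted indices playing the role of $(l_1,m_1;\dots;l_p,m_p)$ and the free indices the role of the $(j_\cdot,n_\cdot)$; the weights $C_{j_1}\cdots C_{j_{2(q-p)}}$ come from the surviving spectral factors. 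A parity argument (the Gaunt integral vanishes unless the relevant index sums are even, reflecting the triangle and zero-projection conditions) shows that only the stated range of $p$ contributes nontrivially, whence the restriction to $p=q/2,\dots,q-1$ or $p=(q-1)/2+1,\dots,q-1$ according to the parity of $q-1$. The complex-valued case $m\neq 0$ (Part 2) additionally requires checking that the real and imaginary parts jointly converge to independent centered Gaussians with variance $1/2$, which follows from the symmetry relation $a_{lm;q}=(-1)^m\overline{a_{l-m;q}}$ together with the vanishing of the relevant cross-contractions.

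The main obstacle I expect is the explicit identification of the contraction norms $\|f_{lm}\otimes_p f_{lm}\|^2$ with the double-Gaunt expression (\ref{as11}): this requires carefully tracking which magnetic indices get contracted versus summed freely, and correctly accounting for the symmetrization of the kernel and the combinatorial multiplicities, so that the abstract chaos-theoretic contraction matches the representation-theoretic convolution of Clebsch-Gordan coefficients. The parity/support bookkeeping that fixes the exact range of the index $p$ is the delicate secondary point, since it relies on the vanishing of $C^{l_3 0}_{l_1 0 l_2 0}$ for odd $l_1+l_2+l_3$ propagating through the iterated couplings.
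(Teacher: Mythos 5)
Your overall architecture --- representing $a_{lm;q}$ as a $q$th-order multiple Wiener--It\^{o} integral, computing $\widetilde{C}_l^{(q)}$ via the isometry together with (\ref{gagaunt}) and (\ref{ortoorto}), and reducing the CLTs of Parts 2 and 3 to the vanishing of contraction norms by the criteria of \cite{Pesco}, \cite{NuPe} and \cite{MaPe} --- is exactly the paper's route, and your treatment of Part 1 is sound. The genuine gap is your justification of the restricted range of $p$. You claim that a parity argument (vanishing of Gaunt integrals for odd index sums) shows that ``only the stated range of $p$ contributes nontrivially.'' This is false: the contraction criterion requires $\Vert h_{l,m}^{(q)}\otimes_p \overline{h_{l,m}^{(q)}}\Vert^2 \rightarrow 0$ for \emph{every} $p=1,\ldots,q-1$, and the contractions with $p$ below the stated threshold do not vanish identically (for $q=3$, say, the $p=1$ contraction is generically nonzero). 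The correct reason, used in the paper, is the elementary symmetry $\Vert h_{l,m}^{(q)}\otimes_p \overline{h_{l,m}^{(q)}}\Vert_{L^2([0,1]^{2(q-p)})}^2 = \Vert h_{l,m}^{(q)}\otimes_{q-p}\overline{h_{l,m}^{(q)}}\Vert_{L^2([0,1]^{2p})}^2$, which makes the conditions for $p$ and for $q-p$ literally the same condition; one may therefore check only the upper half of the range, and the case split on the parity of $q-1$ merely records whether the involution $p\mapsto q-p$ has the fixed point $p=q/2$. Had you carried out the parity argument you propose, it would fail, and a reader following it would wrongly conclude that condition (\ref{as11}) over the stated $p$'s is a strictly weaker requirement than the full family $p=1,\ldots,q-1$, so that the ``if and only if'' assertions in Parts 2 and 3 would not be established.

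The parity of $l_1+l_2+l_3$ in $C_{l_1 0 l_2 0}^{l_3 0}$ does matter elsewhere in the paper (it kills individual summands and is exploited in Section \ref{SS : Q2}), but it places no constraint on which contraction orders must be verified. The remainder of your sketch --- the stochastic Fubini identification of the kernel, the identification of the contraction norms with the double-Gaunt expression (\ref{as11}) via the orthogonality of the first-order kernels, and the handling of $m\neq 0$ through the Hermitian symmetry $a_{lm;q}=(-1)^m\overline{a_{l-m;q}}$ --- is consistent in spirit with the paper, which for the complex case invokes Proposition 6 of \cite{MaPe} directly rather than re-deriving joint convergence of real and imaginary parts.
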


\begin{proof}
Consider a standard Brownian motion $W=\left\{ W_{t}:t\in \left[ 0,1\right]
\right\} $, and denote by $L_{\mathbb{C}}^{2}\left( \left[ 0,1\right]
\right) $ $=$ $L_{\mathbb{C}}^{2}\left( \left[ 0,1\right] ,d\lambda \right) $
the class of complex-valued and square integrable functions on $\left[ 0,1%
\right] $, with respect to the Lebesgue measure $d\lambda $. Now select a
complex-valued family $\left\{ g_{lm}:l\geq 0\text{, \ }-l\leq m\leq
l\right\} \subseteq L_{\mathbb{C}}^{2}\left( \left[ 0,1\right] \right) $
with the following five properties: (1) $g_{l0}$ is real for every $l\geq 0$%
, (2) $g_{lm}=\left( -1\right) ^{m}\overline{g_{l-m}}$, (3)
$\int g_{lm}\overline{g_{l^{\prime }m^{\prime }}}d\lambda $ $=$ $0,$\ $%
\forall \left( l,m\right) $ $\neq $ $\left( l^{\prime },m^{\prime }\right) $%
, (4) $\int \Re \left( g_{lm}\right) \Im \left( g_{lm}\right)
d\lambda $ $=$ $0$, (5) $\int \Re \left( g_{lm}\right)
^{2}d\lambda $ $=$ $\int \Im \left( g_{lm}\right) ^{2}d\lambda $
$=$ $\int g_{l0}^{2}d\lambda /2$ $=C_{l}/2$, where $\left\{
C_{l}:l\geq 0\right\} $ is the power spectrum of the Gaussian
field $T$. According to Proposition \ref{P : BaMa}, the following
identity
in law holds:%
\begin{equation*}
\left\{ a_{lm;1}:l\geq 0\text{, \ }-l\leq m\leq l\right\} \overset{law}{=}%
\left\{ I_{1}\left( g_{lm}\right) :l\geq 0\text{, \ }-l\leq m\leq l\right\} ,
\end{equation*}%
where $I_{1}\left( g_{lm}\right)
=\int_{0}^{1}g_{lm}dW=\int_{0}^{1}\Re (g_{lm})dW$ $+$
$\mathrm{i}\int_{0}^{1}\Im (g_{lm})dW$ is the usual
(complex-valued) Wiener-It\^{o} integral of $g_{lm}$ with respect
to $W$. From this last relation, it also follows that, in the
sense of stochastic processes, $T\left( x\right) $
$\overset{law}{=}$ $I_{1}\left( \sum_{l=0}^{\infty
}\sum_{m=-l}^{l}g_{lm}Y_{lm}\left( x\right) \right) $ (note that
the function $z\mapsto \sum_{l,m}$$g_{lm}(z)Y_{lm}\left( x\right)
$ is real-valued for every fixed $x\in \mathbb{S}^{2}$ and with
norm equal to $1$). Now define $L_{s,\mathbb{C}}^{2}\left( \left[
0,1\right]
^{q}\right) $ to be the class of complex-valued and symmetric functions on $%
\left[ 0,1\right] ^{q}$, that are square-integrable with respect to Lebesgue
measure. For every $f\in L_{s,\mathbb{C}}^{2}\left( \left[ 0,1\right]
^{q}\right) $, we define $I_{q}\left( f\right) =I_{q}\left( \Re (f)\right) +%
\mathrm{i}I_{q}\left( \Im (f)\right) $ to be the multiple Wiener-It\^{o}
integral, of order $q$, of $f$ with respect to the Brownian motion $W$ (see
e.g. \cite[Ch. 1]{NualartBook}, or \cite{Janss}). From the previous
discussion it follows that, for every $q\geq 2$,
\begin{equation}
T^{\left( q\right) }\left( x\right) =H_{q}\left( T\left( x\right) \right)
\overset{law}{=}I_{q}\left[ \left\{ \sum_{l=0}^{\infty
}\sum_{m=-l}^{l}g_{lm}Y_{lm}\left( x\right) \right\} ^{\otimes q}\right] ,
\label{gr}
\end{equation}%
where the equality in law holds in the sense of finite dimensional
distributions and, for every $f\in L_{\mathbb{C}}^{2}\left( \left[ 0,1\right]
\right) $, we use the notation $f^{\otimes q}\left( a_{1},...,a_{q}\right) $
$=$ $f\left( a_{1}\right) $ $\times $ $\cdot \cdot \cdot $ $\times $ $%
f\left( a_{q}\right) .$ Note that, to obtain the last equality in (\ref{gr}%
), we used the well-known relation (see e.g. \cite{Janss}): for every
real-valued $f\in L_{\mathbb{R}}^{2}\left( \left[ 0,1\right] \right) $ such
that $\left\Vert f\right\Vert _{L_{\mathbb{R}}^{2}\left( \left[ 0,1\right]
\right) }=1$, it holds that $H_{q}\left[ I_{1}\left( f\right) \right] $ $=$ $%
I_{q}\left( f^{\otimes q}\right) $. Now set $h_{l,m}^{\left( q\right) }$ $=$
$\left( -1\right) ^{m}\sum_{l_{1},m_{1}}$ $\cdot \cdot \cdot $ $%
\sum_{l_{q},m_{q}}g_{l_{1}m_{1}}$ $\cdot \cdot \cdot $ $g_{l_{q}m_{q}}$ $%
\mathcal{G\{}l_{1},m_{1};...;$ $l_{q},m_{q};l,-m\}$, so that%
\begin{equation}
a_{lm;q}\overset{law}{=}\int_{S^{2}}I_{q}\left[ \left\{ \sum_{l=0}^{\infty
}\sum_{m=-l}^{l}g_{lm}Y_{lm}\left( x\right) \right\} ^{\otimes q}\right]
\overline{Y_{lm}\left( x\right) }dx=I_{q}\left[ h_{l,m}^{\left( q\right) }%
\right]   \label{fubz}
\end{equation}%
so that (\ref{VAR}) follows immediately from the well-known isometry
relation:%
\begin{equation*}
\mathbb{E}\left[ \left\vert I_{q}\left[ h_{l,m}^{\left( q\right) }\right]
\right\vert ^{2}\right] =q!\left\Vert h_{l,m}^{\left( q\right) }\right\Vert
_{L^{2}\left( \left[ 0,1\right] ^{q}\right) }^{2}
\end{equation*}%
(to obtain (\ref{fubz}) we interchanged stochastic and
deterministic integration, by means of a standard stochastic Fubini
argument). To prove that (\ref{VAR2}) is equal to (\ref{VAR}), observe first
that (\ref{ortoorto}) yields that
\begin{equation*}
\sum_{m_{1}=-l_{1}}^{l_{1}}\cdot \cdot \cdot
\sum_{m_{q}=-l_{q}}^{l_{q}}C_{l_{1},m_{1};...;l_{q}m_{q}}^{L_{1},L_{2},...,L_{q-2},l;m}C_{l_{1},m_{1};...;l_{q}m_{q}}^{L_{1}^{\prime },L_{2}^{\prime },...,L_{q-2}^{\prime },l;m}=\delta _{L_{1}}^{L_{1}^{\prime }}...\delta _{L_{q-2}}^{L_{q-2}^{\prime }}
\end{equation*}%
(the RHS of the previous expression does not depend on $m$). Then, use (\ref%
{gagaunt}) to deduce that
\begin{eqnarray}
&&\sum_{m_{1}=-l_{1}}^{l_{1}}\cdot \cdot \cdot \sum_{m_{q}=-l_{q}}^{l_{q}}%
\mathcal{G}\left\{ l_{1},m_{1};...;l_{q},m_{q};l,-m\right\} ^{2}  \notag \\
&=&\frac{4\pi }{2l+1}\left\{ \prod_{i=1}^{q}\frac{2l_{i}+1}{4\pi }\right\}
\sum_{L_{1}...L_{q-2}}\left\{
C_{l_{1},0;...;l_{q}0}^{L_{1},L_{2},...,L_{q-2},l;0}\right\} ^{2}.  \notag
\end{eqnarray}%
This proves Point 1 in the statement. To prove Point 2, recall that,
according to \cite[Proposition 6]{MaPe}, relation (\ref{as0}) holds if, and
only if,
\begin{equation*}
(\widetilde{C}_{l}^{\left( q\right) })^{-2}\left\Vert h_{l,m}^{\left(
q\right) }\otimes _{p}\overline{h_{l,m}^{\left( q\right) }}\right\Vert
_{L^{2}(\left[ 0,1\right] ^{2\left( q-p\right) })}^{2}\rightarrow 0,
\end{equation*}%
for every $p=1,...,q-1$, where the complex-valued (and not necessarily
symmetric) function $h_{l,m}^{\left( q\right) }\otimes _{p}\overline{%
h_{l,m}^{\left( q\right) }}$ (which is an element of $L^{2}(\left[ 0,1\right]
^{2\left( q-p\right) })$) is defined as the \textsl{contraction}%
\begin{eqnarray}
&&h_{l,m}^{\left( q\right) }\otimes _{p}\overline{h_{l,m}^{\left( q\right) }}%
\left( a_{1},...,a_{2\left( q-p\right) }\right)   \label{contpr} \\
&=&\int_{\left[ 0,1\right] ^{p}}h_{l,m}^{\left( q\right) }\left( \mathbf{x}%
_{p},a_{1},...,a_{q-p}\right) \overline{h_{l,m}^{\left( q\right) }\left(
\mathbf{x}_{p},a_{q-p+1},...,a_{2\left( q-p\right) }\right) }d\mathbf{x}_{p},
\notag
\end{eqnarray}%
for every $( a_{1},...,a_{2\left( q-p\right) }) \in \left[ 0,1%
\right] ^{2\left( q-p\right) }$, where $d\mathbf{x}_{p}$ is the
Lebesgue measure on $\left[ 0,1\right] ^{p}$. Since, trivially,
$\Vert h_{l,m}^{\left( q\right) }\otimes _{p}\overline{h_{l,m}^{\left( q\right) }}%
\Vert ^{2}$ $=$ $\Vert h_{l,m}^{\left( q\right) }\otimes _{q-p}%
\overline{h_{l,m}^{\left( q\right) }} \Vert ^{2}$ (we stress that,
in the last equality, the first norm is taken in $L^{2}( \left[
0,1\right] ^{2\left( q-p\right)})$, whereas the second is in
$L^{2}( \left[ 0,1\right]^{2p})$ ), one deduces that it is
sufficient to check that
the norm of $h_{l,m}^{\left( q\right) }\otimes _{p}%
\overline{h_{l,m}^{\left( q\right) }}$ is asymptotically
negligeable for every $p=\frac{q-1}{2}+1,...,q-1$, if $q-1$ is
even, and every $p=q/2,...,q-1$ if $q-1$ is odd. It follows that
the result is proved once it is shown that, for every $p$ in such
range, the norm $\Vert h_{l,m}^{\left( q\right) }\otimes _{p}\overline{%
h_{l,m}^{\left( q\right) }}\Vert ^{2}$ equals the multiple sum
appearing in (\ref{as11}). To see this, use (\ref{contpr}) to
deduce that
(recall that Gaunt integrals are real-valued)%
\begin{eqnarray*}
&&h_{l,m}^{\left( q\right) }\otimes _{p}\overline{h_{l,m}^{\left( q\right) }}%
\left( a_{1},...,a_{2\left( q-p\right) }\right)  \\
&=&\sum_{n_{1},j_{1}}\cdot \cdot \cdot \sum_{n_{2\left( q-p\right)
},j_{2\left( q-p\right) }}g_{j_{1}n_{1}}\cdot \cdot \cdot g_{j_{q-p}n_{q-p}}%
\overline{g_{j_{q-p+1}n_{q-p+1}}\cdot \cdot \cdot g_{j_{2\left( q-p\right)
}n_{2\left( q-p\right) }}} \\
&&\sum_{l_{1},m_{1}}\cdot \cdot \cdot \sum_{l_{p},m_{p}}C_{l_{1}}\cdot \cdot
\cdot C_{l_{p}}\mathcal{G}\left\{
l_{1},m_{1};...;l_{p},m_{p};j_{1},n_{1};...;j_{q-p},n_{q-p};l,-m\right\}  \\
&&\text{ \ }\mathcal{G}\left\{
l_{1},m_{1};...;l_{p},m_{p};j_{q-p+1},n_{q-p+1};...;j_{2\left( q-p\right)
},n_{2\left( q-p\right) };l,-m\right\} ,
\end{eqnarray*}%
and the result is obtained by using the orthogonality properties of the $%
g_{jn}$'s. Point 3 in the statement is proved in exactly the same way, by
first observing that $a_{l0;q}$ is a real-valued random variable, and then
by applying Theorem 1 in \cite{NuPe}.
\end{proof}

\textbf{Remark}.\textbf{\ }One has the relation $\mathbb{E}\left[ T^{\left(
q\right) }\left( x\right) ^{2}\right] =q!\left[ E\left\{ T(x)^{2}\right\} %
\right] ^{q}$. This equality can be proved in two ways: (i) by exploiting
the representation of $T^{\left( q\right) }\left( x\right) $ as a multiple
Wiener-It\^{o} integral, or (ii) by using the equality $\mathbb{E}\left[
T^{\left( q\right) }\left( x\right) ^{2}\right] =\sum_{l}\frac{2l+1}{4\pi }%
\widetilde{C}_{l}^{(q)}$, and the by expanding
$\widetilde{C}_{l}^{(q)}$ according to Theorem \ref{teo1}, so that
one can apply the orthogonality relations (\ref{ortoorto}).

\smallskip

Now recall that, according to part 2 of Lemma \ref{L : Pl}, $T_{l}^{\left(
q\right) }\left( x\right) \overset{law}{=}\sqrt{\frac{2l+1}{4\pi }}a_{l0;q}$%
, so that relation (\ref{tq}) holds. This gives immediately a first
(exhaustive) solution to Problem (\textbf{P-I}), as stated in Section \ref{S
: GaussSub}.

\begin{corollary}
\label{C : PunctualCLT}For every $q\geq 2$ the following conditions are
equivalent:

\begin{enumerate}
\item The CLT (\ref{CLT1}) holds for every $x\in \mathbb{S}^{2}$;

\item The asymptotic relation (\ref{as11}) takes place for $m=0$ and for
every $p=\frac{q-1}{2}+1,...,q-1$, if $q-1$ is even, and every $%
p=q/2,...,q-1 $ if $q-1$ is odd.
\end{enumerate}
\end{corollary}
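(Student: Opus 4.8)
The plan is to reduce Corollary \ref{C : PunctualCLT} directly to Point 3 of Theorem \ref{teo1}, using the distributional identity recorded in Lemma \ref{L : Pl} to convert the statement about the projection $T_l^{(q)}(x)$ into one about the single coefficient $a_{l0;q}$.

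First I would invoke part 2 of Lemma \ref{L : Pl}, which gives, for every fixed $x\in\mathbb{S}^2$, the equality in law $T_l^{(q)}(x)\overset{law}{=}\sqrt{(2l+1)/4\pi}\,a_{l0;q}$. Equality in law of single random variables is preserved under multiplication by a deterministic constant, so scaling both sides by the variance-normalizing factor $[(2l+1)\widetilde{C}_l^{(q)}/4\pi]^{-1/2}$ collapses the factor $\sqrt{(2l+1)/4\pi}$ and yields $\overline{T}_l^{(q)}(x)\overset{law}{=}(\widetilde{C}_l^{(q)})^{-1/2}a_{l0;q}$. The dependence on $x$ disappears entirely, so the normalized quantity in the CLT (\ref{CLT1}) has exactly the same law as $(\widetilde{C}_l^{(q)})^{-1/2}a_{l0;q}$, for every $x$ simultaneously.

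Since weak convergence depends only on the law of the sequence, (\ref{CLT1}) holds (for one, hence for every, $x$) if and only if $(\widetilde{C}_l^{(q)})^{-1/2}a_{l0;q}\overset{law}{\rightarrow}N$, which is precisely relation (\ref{as00}). Point 3 of Theorem \ref{teo1} asserts that (\ref{as00}) is equivalent to the asymptotic condition (\ref{as11}) holding for $m=0$ and for every $p$ in the prescribed range (namely $p=\frac{q-1}{2}+1,\dots,q-1$ when $q-1$ is even, and $p=q/2,\dots,q-1$ when $q-1$ is odd). Chaining the two equivalences delivers the statement.

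There is no real obstacle here: this is a direct corollary once the reduction of Lemma \ref{L : Pl} is in hand, and the proof is essentially a restatement of Theorem \ref{teo1}(3). The only points demanding minor care are the bookkeeping of the normalizing constants---verifying that the factor $\sqrt{(2l+1)/4\pi}$ arising from $Y_{l0}$ at the pole cancels against the variance normalization to leave exactly $(\widetilde{C}_l^{(q)})^{-1/2}$---and the observation that the phrase ``for every $x$'' in condition 1 is automatic, since by the isotropy established in part 1 of Lemma \ref{L : Pl} the law of $T_l^{(q)}(x)$ does not depend on $x$.
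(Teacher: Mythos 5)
Your proof is correct and takes essentially the same route as the paper: the authors also derive the corollary immediately from part 2 of Lemma \ref{L : Pl} (identifying $T_l^{(q)}(x)$ in law with $\sqrt{(2l+1)/4\pi}\,a_{l0;q}$, so that after variance normalization the CLT (\ref{CLT1}) becomes exactly (\ref{as00})) combined with Point 3 of Theorem \ref{teo1}. Your added remarks on the cancellation of normalizing constants and on the $x$-independence of the law are exactly the bookkeeping the paper leaves implicit.
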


To deal with Problem (\textbf{P-II}) of Section \ref{S : GaussSub}, we
recall the notation $\overline{T}_{l}^{\left( q\right) }$ (indicating the $l$%
th normalized frequency component of $T^{\left( q\right) }$) introduced in (%
\ref{Short2.5}). We also introduce (for every $l\geq 1$) the \textsl{%
normalized }$l$\textsl{th frequency component} of the Gaussian field $T$,
which is defined as
\begin{equation}
\overline{T}_{l}\left( x\right) =\frac{T_{l}\left( x\right) }{%
Var(T_{l}\left( x\right) )^{1/2}}=\frac{T_{l}\left( x\right) }{(\frac{2l+1}{%
4\pi }C_{l})^{1/2}}\text{, \ \ }x\in \mathbb{S}^{2}\text{.}  \label{ti_barra}
\end{equation}

According to Lemma \ref{L : Pl} (in the special case $F\left(
z\right) =z$), $\overline{T}_{l}$ is a real-valued, isotropic,
centered and Gaussian field. Moreover, one has that $\mathbb{E}[
\overline{T}_{l}\left( x\right)
\overline{T}_{l}\left( y\right) ] $ $=$ $\mathbb{E}[ \overline{T}%
_{l}^{\left( q\right) }\left( x\right) \overline{T}_{l}^{\left( q\right)
}\left( y\right) ] =P_{l}\left( \left\langle x,y\right\rangle \right) $%
, for every $q\geq 2$ and every $l\geq 1$. The next result -- which gives an
exhaustive solution to Problem (\textbf{P-II}) -- states that, whenever
Condition 1 (or, equivalently, Condition 2) in the statement of Corollary %
\ref{C : PunctualCLT} is verified (and without\textsl{\ any }additional
assumption), the \textquotedblleft distance\textquotedblright\ between the
finite dimensional distributions of the normalized field $\overline{T}%
_{l}^{\left( q\right) }$ and those of $\overline{T}_{l}$ converge
to zero. For every $k\geq 1$, we denote by $\mathbf{P}
(\mathbb{R}^{k}) $ the class of all probability measures on
$\mathbb{R}^{k}$. We say that a metric $\gamma \left( \cdot ,\cdot
\right) $ \textsl{metrizes the weak convergence} \textsl{on}
$\mathbf{P}( \mathbb{R}^{k}) $ whenever the following double
implication holds for every $Q\in \mathbf{P}( \mathbb{R}^{k}) $
and every $\left\{ Q_{l}:l\geq 1\right\} \subset
\mathbf{P}(\mathbb{R}^{k}) $ (as $l\rightarrow +\infty $): $%
\gamma \left( Q_{l},Q\right) \rightarrow 0$ if, and only if,
$Q_{l}$ converges weakly to $Q$. The quantity $\gamma(P,Q)$ is
sometimes called the $\gamma$\textsl{--distance} between $P$ and
$Q$.

\begin{theorem}
\label{T : VectorCV}Let $q\geq 2$ be fixed, and suppose that Condition 1 (or
2) of Corollary \ref{C : PunctualCLT} is satisfied.

\begin{enumerate}
\item For every $k\geq 1$, every $x_{1},...,x_{k}\in \mathbb{S}^{2}$ and
every compact subset $M\subset \mathbb{R}^{k}$,%
\begin{equation}
\sup_{\left( \lambda _{1},...,\lambda _{k}\right) \in M}\left\vert \mathbb{E}%
\left[ e^{\mathrm{i}\sum_{j=1}^{k}\lambda _{j}\overline{T}_{l}^{\left(
q\right) }\left( x_{j}\right) }\right] -\mathbb{E}\left[ ^{^{^{^{^{{}}}}}}e^{%
\mathrm{i}\sum_{j=1}^{k}\lambda _{j}\overline{T}_{l}^{{}}\left( x_{j}\right)
}\right] \right\vert \underset{l\rightarrow +\infty }{\longrightarrow }0.
\label{THcv}
\end{equation}

\item Fix $x_{1},...,x_{k}$ and denote by $\mathcal{L}\left( \overline{T}%
_{l}^{\left( q\right) };x_{1},...,x_{k}\right) $ and $\mathcal{L}\left(
\overline{T}_{l};x_{1},...,x_{k}\right) $ ($l\geq 1$), respectively, the law
of $\left( \overline{T}_{l}^{\left( q\right) }\left( x_{1}\right) ,...,%
\overline{T}_{l}^{\left( q\right) }\left( x_{k}\right) \right) $
and the law of $\left( \overline{T}_{l}\left( x_{1}\right)
,...,\overline{T}_{l}\left( x_{k}\right) \right) $. For every
metric $\gamma \left( \cdot ,\cdot \right) $ on $\mathbf{P}(
\mathbb{R}^{k}) $ such that $\gamma \left( \cdot ,\cdot \right) $
metrizes the weak convergence, it holds that
\begin{equation*}
\lim_{l\rightarrow +\infty }\gamma \left( \mathcal{L}\left( \overline{T}%
_{l}^{\left( q\right) };x_{1},...,x_{k}\right) ,\mathcal{L}\left( \overline{T%
}_{l};x_{1},...,x_{k}\right) \right) =0.
\end{equation*}
\end{enumerate}
\end{theorem}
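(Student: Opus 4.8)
The plan is to reduce both assertions to the vector-valued central limit theorem for multiple Wiener--It\^{o} integrals of a fixed order (the criterion of \cite{Pesco}; see also \cite{PT}), combined with a subsequence argument that bypasses the fact that the covariances $P_{l}(\left\langle x_{i},x_{j}\right\rangle )$ need not converge as $l\rightarrow +\infty $. First I would record the structural facts. By the representation established in the proof of Theorem \ref{teo1}, each normalized component $\overline{T}_{l}^{\left( q\right) }(x_{j})$ is a real-valued multiple Wiener--It\^{o} integral of order $q$ with unit variance, and the vector $(\overline{T}_{l}^{\left( q\right) }(x_{1}),\dots ,\overline{T}_{l}^{\left( q\right) }(x_{k}))$ has covariance matrix $\Sigma _{l}=(P_{l}(\left\langle x_{i},x_{j}\right\rangle ))_{1\leq i,j\leq k}$ --- which, by the remark preceding the statement, is exactly the covariance of the Gaussian vector $(\overline{T}_{l}(x_{1}),\dots ,\overline{T}_{l}(x_{k}))\sim \mathcal{N}(0,\Sigma _{l})$. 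Since $|P_{l}(\cos \theta )|\leq 1$, every entry of $\Sigma _{l}$ lies in $[-1,1]$, so $\{\Sigma _{l}\}$ is relatively compact in the compact set of $k\times k$ positive-semidefinite matrices with unit diagonal.

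The engine is the following. By hypothesis (Condition 1 of Corollary \ref{C : PunctualCLT}) each component satisfies $\overline{T}_{l}^{\left( q\right) }(x_{j})\overset{law}{\longrightarrow }\mathcal{N}(0,1)$. Given any subsequence, I would extract by compactness a further subsequence $l^{\prime }$ along which $\Sigma _{l^{\prime }}\rightarrow \Sigma _{\ast }$. Along $l^{\prime }$ the covariances converge while each component still converges to $\mathcal{N}(0,1)$; hence the vector-valued criterion of \cite{Pesco} --- which, for integrals of a fixed order, upgrades componentwise convergence to joint convergence as soon as the covariance matrix converges --- yields $(\overline{T}_{l^{\prime }}^{\left( q\right) }(x_{1}),\dots ,\overline{T}_{l^{\prime }}^{\left( q\right) }(x_{k}))\overset{law}{\longrightarrow }\mathcal{N}(0,\Sigma _{\ast })$, and trivially the Gaussian vector $(\overline{T}_{l^{\prime }}(x_{1}),\dots ,\overline{T}_{l^{\prime }}(x_{k}))\overset{law}{\longrightarrow }\mathcal{N}(0,\Sigma _{\ast })$ as well.

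For Part 1 I argue by contradiction: if the supremum in (\ref{THcv}) failed to vanish, there would be a subsequence $l^{\prime }$ and points $\lambda ^{(l^{\prime })}\in M$ along which the difference of characteristic functions stays $\geq \varepsilon >0$; refining so that $\lambda ^{(l^{\prime })}\rightarrow \lambda ^{\ast }\in M$ (compactness of $M$) and $\Sigma _{l^{\prime }}\rightarrow \Sigma _{\ast }$, the previous step shows both characteristic functions converge to $\exp (-\tfrac{1}{2}(\lambda ^{\ast })^{\!\top }\Sigma _{\ast }\lambda ^{\ast })$, and since weak convergence forces uniform convergence of characteristic functions on compact sets, the difference evaluated at $\lambda ^{(l^{\prime })}$ tends to $0$, a contradiction. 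Part 2 is the same subsequence argument read through $\gamma $: if the $\gamma $-distance did not tend to $0$, I would take a subsequence bounded below by $\varepsilon $ and refine so that $\Sigma _{l^{\prime }}\rightarrow \Sigma _{\ast }$; then both laws converge weakly to $\mathcal{N}(0,\Sigma _{\ast })$, so, because $\gamma $ metrizes weak convergence, each is eventually $\gamma $-close to $\mathcal{N}(0,\Sigma _{\ast })$, and the triangle inequality contradicts the lower bound.

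The main obstacle is precisely that $\Sigma _{l}$ has no limit, so one cannot invoke \cite{Pesco} directly to produce a single Gaussian limit law; the resolution rests on the relative compactness of $\{\Sigma _{l}\}$ together with the observation that the quantity to be controlled in each part --- the difference of the two characteristic functions, respectively the $\gamma $-distance between the two laws --- is insensitive to \emph{which} subsequential limit $\Sigma _{\ast }$ one reaches, since the two fields share the same covariance at every $l$. A secondary point requiring care is the verification of the hypotheses of the vector-valued criterion: that componentwise convergence of order-$q$ integrals genuinely suffices once the covariances converge, and that each limit $\Sigma _{\ast }$ is a legitimate (positive-semidefinite, unit-diagonal) covariance matrix, which is automatic as a limit of such matrices.
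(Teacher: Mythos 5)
Your proof is correct, and its skeleton matches the paper's: both rest on the fact (drawn from the proof of Theorem \ref{teo1}) that $(\overline{T}_{l}^{(q)}(x_{1}),\dots,\overline{T}_{l}^{(q)}(x_{k}))$ is equal in law to a vector of order-$q$ multiple Wiener--It\^{o} integrals whose covariance matrix $\Sigma_{l}=\left(P_{l}(\cos\langle x_{i},x_{j}\rangle)\right)_{i,j}$ coincides with that of the comparison Gaussian vector $(\overline{T}_{l}(x_{1}),\dots,\overline{T}_{l}(x_{k}))$, and both then feed the componentwise CLT (Condition 1 of Corollary \ref{C : PunctualCLT}) into a CLT criterion for multiple integrals. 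The difference lies in how the non-convergence of $\Sigma_{l}$ is handled. The paper simply cites Theorem 1 and Proposition 2 of \cite{Pesco}, results engineered for exactly this situation: they assert that the law of a vector of multiple integrals is asymptotically close, in any metric metrizing weak convergence, to the Gaussian law with the \emph{same} (possibly non-convergent) covariance matrix, assuming only componentwise asymptotic Gaussianity and bounded variances. You instead rebuild this ``moving target'' comparison from the classical fixed-limit statement --- the Peccati--Tudor theorem of \cite{PT}, which is what you actually use when you say that componentwise convergence upgrades to joint convergence once covariances converge --- by exploiting compactness of the set of correlation matrices: you extract subsequences along which $\Sigma_{l'}\to\Sigma_{*}$, apply the fixed-covariance vector CLT there, and conclude by contradiction, via uniform convergence of characteristic functions on compacts for Part 1 and the triangle inequality for $\gamma$ in Part 2. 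In effect you give a self-contained proof of the special case of \cite{Pesco} that the theorem needs; this buys independence from that reference (only \cite{PT} and elementary compactness are required) at the cost of the subsequence bookkeeping, whereas the paper's route is shorter because that bookkeeping is packaged inside the cited results. The two checkpoints you flag --- that the fixed-covariance criterion genuinely applies to vectors of integrals of a common fixed order, and that any subsequential limit $\Sigma_{*}$ is a legitimate positive-semidefinite unit-diagonal matrix --- are the right ones, and both hold.
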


\begin{proof}
The crucial point is that the spherical\ field $x\mapsto $ $\overline{T}%
_{l}^{\left( q\right) }\left( x\right) $ lives in the $q$th Wiener
chaos associated with the Gaussian space generated by $T$. By
using this fact, and by arguing as in the proof of Theorem
\ref{teo1}, one can show that the
vector $(\overline{T}_{l}^{\left( q\right) }\left( x_{1}\right) ,...,%
\overline{T}_{l}^{\left( q\right) }\left( x_{k}\right) )$ is
indeed equal in law to a vector of multiple Wiener-It\^{o}
integrals, of order $q$, with respect to a Brownian motion. Since
each element of this vector converges in law to a standard
Gaussian random variable, one can directly apply Theorem 1 and Proposition 2 in \cite%
{Pesco} to achieve the desired conclusion (see also \cite[Proposition 5]%
{Pesco}).
\end{proof}

\subsection{General subordination\label{SS : General Sub}}

We now give a solution to Problem (\textbf{P-III}), as stated at the end of
Section \ref{S : GaussSub}, where $F$ is a general real-valued function
belonging to the class $L_{0}^{2}\left( \mathbb{R},e^{-x^{2}/2}dx\right) $.
The function $F$ admits a unique representation of the form
\begin{equation}
F\left( z\right) =\sum_{q=1}^{\infty }\frac{c_{q}\left( F\right) }{q!}%
H_{q}\left( z\right) \text{, \ \ }z\in \mathbb{R}\text{,}  \label{devF}
\end{equation}%
where the Hermite polynomials $H_{q}$ are given by (\ref{Her}) and the real
coefficients $c_{q}\left( F\right) $, $q=1,2...$, are such that
\begin{equation}
\Sigma _{q}\frac{c_{q}\left( F\right) ^{2}}{q!}<+\infty \text{ .}
\label{proprCF}
\end{equation}

As a consequence, for every $l\geq 0$, the frequency component $F\left[ T%
\right] _{l}\left( x\right) $ defined in (\ref{subProj}) admits the expansion%
\begin{equation}
F\left[ T\right] _{l}\left( x\right) =\sum_{q=1}^{\infty }\frac{c_{q}\left(
F\right) }{q!}T_{l}^{\left( q\right) }\left( x\right) \text{, \ \ }x\in
\mathbb{S}^{2}\text{,}  \label{dev coeff}
\end{equation}%
where the series converges in $L^{2}\left( \mathbb{P}\right) $ for every
fixed $x$. Formula (\ref{dev coeff}) combined with Lemma \ref{L : Pl} yields
also that
\begin{equation*}
\mathbb{E(}F\left[ T\right] _{l}\left( x\right) F\left[ T\right] _{l}\left(
y\right) )=\frac{2l+1}{4\pi }P_{l}\left( \cos \left\langle x,y\right\rangle
\right) \sum_{q=1}^{\infty }\left( \frac{c_{q}\left( F\right) }{q!}\right)
^{2}\widetilde{C}_{l}^{(q)}\text{,}
\end{equation*}%
where $\widetilde{C}_{l}^{(q)}$ is given by (\ref{Short3}) or, equivalently,
by (\ref{VAR2}). The next result characterizes the asymptotic Gaussianity of
$F$-subordinated spherical random fields. Recall the definition of $%
\overline{F\left[ T\right] }_{l}$ given in (\ref{SubNorm}). The proof is
standard, and therefore omitted (it can be obtained e.g. along the lines of
\cite[Th.\ 4]{HuNu}).

\begin{theorem}
\label{T : GenCLT}Suppose that the following relations hold

\begin{enumerate}
\item For every $q\geq 1$, $\lim_{l\rightarrow +\infty }\frac{2l+1}{4\pi }%
\left( \frac{c_{q}\left( F\right) }{q!}\right) ^{2}\widetilde{C}_{l}^{(q)}/%
\mathbb{E(}F\left[ T\right] _{l}\left( x\right) ^{2})$ $\rightarrow $ $%
\sigma _{q}^{2}\in \lbrack 0,+\infty );$

\item $\sum_{m\geq 1}\left\{ c_{q}\left( F\right) /q!\right\} ^{2}\sigma
_{q}^{2}$ $\triangleq $ $\sigma ^{2}\left( F\right) $ $<$ $+\infty ;$

\item For every $q\geq 2$, the asymptotic relation (\ref{as11}) takes place
for $m=0$ and for every $p=\frac{q-1}{2}+1,...,q-1$, if $q-1$ is even, and
every $p=q/2,...,q-1$ if $q-1$ is odd;

\item $\lim_{p\rightarrow +\infty }\overline{\lim }_{l}\left( 2l+1\right)
\sum_{q=p+1}^{\infty }\left( \frac{c_{q}\left( F\right) }{q!}\right) ^{2}%
\widetilde{C}_{l}^{(q)}=0.$
\end{enumerate}

Then, for every $k\geq 1$, every $x_{1},...,x_{k}\in \mathbb{S}^{2}$ and
every compact $M\subset \mathbb{R}^{k}$,%
\begin{equation*}
\sup_{\left( \lambda _{1},...,\lambda _{k}\right) \in M}\left\vert \mathbb{E}%
\left[ e^{\mathrm{i}\sum_{j=1}^{k}\lambda _{j}\overline{F\left[ T\right] }%
_{l}\left( x_{j}\right) }\right]\! -\!\mathbb{E}\left[
e^{\mathrm{i}\sigma ^{2}\left( F\right)
^{1/2}\sum_{j=1}^{k}\lambda _{j}\overline{T}_{l}\left(
x_{j}\right) }\right] \right\vert \underset{l\rightarrow +\infty }{%
\!\rightarrow \!}0\text{,}
\end{equation*}%
where we used the notation (\ref{ti_barra}). In particular, the
last asymptotic relation implies that, for every
$\gamma(\cdot,\cdot)$ metrizing the weak convergence on
$\mathbf{P}( \mathbb{R}^{k}) $, the $\gamma$--distance between
$$(\overline{F\left[ T\right]} _{l}\left(
x_{1}\right),...,\overline{F\left[ T\right]} _{l}\left(
x_{k}\right))$$ and $\sigma ^{2}\left( F\right)^{1/2}(
\overline{T}_{l}\left( x_{1}\right),$ $...,$
$\overline{T}_{l}\left( x_{k}\right))$ converges to zero as
$l\rightarrow +\infty$.
\end{theorem}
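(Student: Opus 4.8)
The plan is to combine the chaos decomposition (\ref{dev coeff}) with the multivariate central limit theorem for multiple Wiener-It\^o integrals from \cite{Pesco}, handling the series over Hermite orders by a truncation argument governed by Condition 4; this follows the blueprint of \cite[Theorem 4]{HuNu}. Writing $\overline{T}_l^{(q)}$ for the normalized components of (\ref{Short2.5}), relations (\ref{dev coeff}) and (\ref{SubNorm}) give $\overline{F[T]}_l(x) = \sum_{q\ge 1} w_{q,l}\,\overline{T}_l^{(q)}(x)$ with $w_{q,l} = (c_q(F)/q!)\,(\widetilde C_l^{(q)}/C_l(F))^{1/2}$, and, since $C_l(F)=\sum_{q\ge1}(c_q(F)/q!)^2\widetilde C_l^{(q)}$, one has the exact identity $\sum_{q\ge1}w_{q,l}^2=1$ for every $l$. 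By part 2 of Lemma \ref{L : Pl} each normalized component has the same covariance $\mathbb{E}[\overline{T}_l^{(q)}(x)\overline{T}_l^{(q)}(y)]=P_l(\cos\langle x,y\rangle)$, independent of $q$, and---exactly as in the proof of Theorem \ref{teo1}---each $\overline{T}_l^{(q)}(x)$ is equal in law to a multiple Wiener-It\^o integral of order $q$ against one fixed Brownian motion. Thus for a fixed truncation level $p$ the family $\{\overline{T}_l^{(q)}(x_j):1\le q\le p,\ 1\le j\le k\}$ is a vector of multiple integrals of orders $1,\dots,p$.

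For this finite vector I would first use Condition 3, together with Corollary \ref{C : PunctualCLT} (equivalently part 2 of Theorem \ref{teo1}), to ensure that each scalar entry $\overline{T}_l^{(q)}(x_j)$ is asymptotically standard Gaussian. Because multiple integrals of distinct orders are orthogonal, the cross-covariances $\mathbb{E}[\overline{T}_l^{(q)}(x)\,\overline{T}_l^{(q')}(y)]$ vanish identically whenever $q\ne q'$, whereas the within-order covariances equal $P_l(\cos\langle x,y\rangle)$. The multivariate criterion of \cite{Pesco}---the very result invoked in the proof of Theorem \ref{T : VectorCV}---then yields that the characteristic function of the whole vector is, uniformly on compacts, asymptotically close to that of a centred Gaussian vector whose covariance is block diagonal in $q$, each block being $(P_l(\cos\langle x_i,x_j\rangle))_{i,j}$ and distinct blocks independent. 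Forming the linear combination with weights $w_{q,l}$ and using the vanishing of the cross terms, it follows that the truncated field $\sum_{q\le p}w_{q,l}\overline{T}_l^{(q)}$ has characteristic function uniformly close on any compact $M$ to that of $\Sigma_{p,l}^{1/2}\,\overline{T}_l$, where $\Sigma_{p,l}=\sum_{q\le p}w_{q,l}^2$; by Condition 1, $\Sigma_{p,l}\to\sum_{q\le p}\sigma_q^2$ as $l\to+\infty$, and by Condition 2 these partial sums converge to $\sigma^2(F)$ as $p\to+\infty$.

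It remains to pass from the truncation to the full series. Using $\sum_{q\ge1}w_{q,l}^2=1$ and the orthogonality of chaoses, the tail field $\sum_{q>p}w_{q,l}\overline{T}_l^{(q)}(x)$ is centred with variance $1-\Sigma_{p,l}=\sum_{q>p}w_{q,l}^2$, and this quantity is exactly the ratio $\{(2l+1)\sum_{q>p}(c_q(F)/q!)^2\widetilde C_l^{(q)}\}/\{(2l+1)C_l(F)\}$; Condition 4 bounds its numerator uniformly as $p\to+\infty$, so that, in combination with Conditions 1 and 2, the tail variance tends to $0$ uniformly in $l$ as $p\to+\infty$. By the elementary bound $|e^{\mathrm i a}-e^{\mathrm i b}|\le|a-b|$ and the Cauchy-Schwarz inequality, the characteristic-function error produced by discarding the tail is controlled on $M$ by a constant (depending only on $M$) times $(\sum_{q>p}w_{q,l}^2)^{1/2}$, hence is negligible uniformly in $l$ once $p$ is large. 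A standard $3\varepsilon$ argument---fix $p$ large to kill the tail uniformly in $l$, then let $l\to+\infty$ in the truncated comparison of the previous paragraph, and finally match $\Sigma_{p,l}$ with $\sigma^2(F)$---produces the displayed uniform convergence of characteristic functions. The concluding $\gamma$-distance assertion is then immediate: since $\gamma$ metrizes weak convergence and both law-sequences carry the common moving covariance $P_l$, uniform closeness of characteristic functions on compacts forces the $\gamma$-distance to vanish, exactly as in part 2 of Theorem \ref{T : VectorCV}.

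The delicate point, and the step I expect to require the most care, is the double limit in $p$ and $l$. One must exhibit the tail control of Condition 4 as genuinely uniform in $l$, and combine it with a \emph{moving} Gaussian target $\Sigma_{p,l}^{1/2}\overline{T}_l$ whose covariance $P_l$ does not converge; consequently the multivariate result of \cite{Pesco} has to be used in its ``asymptotic closeness of characteristic functions'' form rather than as a genuine convergence in law, and the interchange of the limits $l\to+\infty$ and $p\to+\infty$ must be justified with the uniform-on-compacts estimates above. Once this interchange is secured, the remaining computations are routine consequences of the Wiener-It\^o isometry and the orthogonality of the chaoses.
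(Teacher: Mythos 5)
Your overall route is the one the paper intends: the proof is omitted in the paper precisely because it is the ``standard'' argument along the lines of \cite[Th. 4]{HuNu} --- chaos decomposition (\ref{dev coeff}), realization of all components $\overline{T}_{l}^{\left( q\right) }\left( x_{j}\right) $ as multiple Wiener-It\^{o} integrals with respect to a single Brownian motion, the Gaussian-approximation criterion of \cite{Pesco} applied to the truncated vector exactly as in the proof of Theorem \ref{T : VectorCV}, and a $3\varepsilon$ truncation argument. Two small remarks: the componentwise CLT for $m=0$ comes from Part 3 (not Part 2) of Theorem \ref{teo1}, and you need the version of \cite{Pesco} for vectors whose components live in chaoses of \emph{different} orders (in the spirit of \cite{PT}); both are harmless.

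There is, however, one step that does not follow as you wrote it: the claim that Conditions 1, 2 and 4 force the tail variance $\sum_{q>p}w_{q,l}^{2}$ to vanish uniformly in $l$ as $p\rightarrow +\infty $. Since $\sum_{q\geq 1}w_{q,l}^{2}=1$ exactly and Condition 1 gives $w_{q,l}^{2}\rightarrow \sigma _{q}^{2}$ for each fixed $q$, one has
\begin{equation*}
\lim_{p\rightarrow +\infty }\limsup_{l\rightarrow +\infty
}\sum_{q>p}w_{q,l}^{2}=1-\sum_{q\geq 1}\sigma _{q}^{2}\text{,}
\end{equation*}
so the uniform tail control you need is \emph{equivalent} to $\sum_{q}\sigma _{q}^{2}=1$, and this is not delivered by the stated hypotheses. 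Condition 2 cannot help: by Fatou, $\sum_{q}\sigma _{q}^{2}\leq 1$ automatically, so finiteness carries no information. Condition 4, read literally, cannot help either, because it is itself automatic: since $\frac{2l+1}{4\pi }\widetilde{C}_{l}^{(q)}\leq \sum_{l^{\prime }}\frac{2l^{\prime }+1}{4\pi }\widetilde{C}_{l^{\prime }}^{(q)}=\mathbb{E}[T^{(q)}(x)^{2}]=q!$, one gets $\left( 2l+1\right) \sum_{q>p}\left( c_{q}\left( F\right) /q!\right) ^{2}\widetilde{C}_{l}^{(q)}\leq 4\pi \sum_{q>p}c_{q}\left( F\right) ^{2}/q!\rightarrow 0$ uniformly in $l$, for \emph{any} $F$ satisfying (\ref{proprCF}). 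Thus Condition 4 bounds the numerator of your ratio but says nothing about the ratio itself when the denominator $\left( 2l+1\right) C_{l}\left( F\right) $ is not bounded away from zero --- which does occur, e.g. for exponentially decaying spectra as in Section \ref{S : Ang PS}; in that regime variance can escape to chaoses of unbounded order and the double limit in $p$ and $l$, which you yourself single out as the delicate point, genuinely breaks down. The repair is to read Condition 4 in the same normalization as Condition 1 (i.e. divided by $\mathbb{E}(F\left[ T\right] _{l}\left( x\right) ^{2})$), equivalently to assume $\sigma ^{2}\left( F\right) =\sum_{q}\sigma _{q}^{2}=1$; note that this reading is in any case forced by the conclusion itself, since $\overline{F\left[ T\right] }_{l}\left( x\right) $ has unit variance while the comparison Gaussian has variance $\sigma ^{2}\left( F\right) $. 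With the tail condition so interpreted, your argument closes and is, as far as one can reconstruct it, the proof the paper has in mind.
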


\textbf{Remark. }A sufficient condition, ensuring that points 1 and 3 in the
statement of Theorem \ref{T : GenCLT} are verified, is the following: there
exist constants $\rho \left( q\right) >0$ such that (a) $\left( 2l+1\right)
\widetilde{C}_{l}^{(q)}\leq \rho \left( q\right) $ for every $q\geq 1$ and
every $l$, and (b) $\sum_{q=1}^{\infty }\left( \frac{c_{q}\left( F\right) }{%
q!}\right) ^{2}\rho \left( q\right) <+\infty $.

\section{Explicit sufficient conditions: convolutions and random walks\label%
{S : Explicit}}

In this section, we further explicit the conditions for the CLTs proved in
Section \ref{S: CLT} for the (Hermite) frequency components $T_{l}^{\left(
q\right) }$, $l\geq 0$. In particular, we shall establish sufficient
conditions that are more directly linked to primitive assumptions on the
behaviour of the angular power spectrum $\left\{ C_{l}:l\geq 0\right\} $.
The results of Section \ref{SS : Q2} and Section \ref{SS : Q3} cover,
respectively, the case $q=2$ and $q=3$. Section \ref{SS : CONJQ} contains
some partial findings for the case of a general $q$, as well as several
conjectures. These results will be used in Section \ref{S : Ang PS} to
deduce explicit conditions on the rate of decay of the angular power
spectrum $\left\{ C_{l}:l\geq 0\right\} $.

Our analysis is inspired by the following result, which is a particular case
of the statements contained in \cite[Section 3]{MaPe}, concerning fields on
Abelian groups. Consider indeed a centered real-valued Gaussian field $%
V=\left\{ V\left( \theta \right) :\theta \in \mathbb{T}\right\} $ defined on
the torus $\mathbb{T=[}0,2\mathbb{\pi )}$ (that we regard as an Abelian
compact group with group operation given by $xy=\left( x+y\right) \mathbf{mod%
}(2\pi )$). We suppose that the law of $V$ is \textsl{isotropic}, i.e. that $%
V\left( \theta \right) \overset{law}{=}V\left( x\theta \right) $ (in the
sense of stochastic processes) for every $x\in \mathbb{T}$, and also $%
\mathbb{E}V\left( \theta \right) ^{2}=1.$ We denote by $V\left( \theta
\right) =\sum_{l\in \mathbb{Z}}a_{l}e^{\mathrm{i}l\theta }$ the Fourier
decomposition of $V$, and we write $\Gamma _{l}^{V}=\mathbb{E}\left\vert
a_{l}\right\vert ^{2}$ (note that $\Gamma _{l}^{V}=\Gamma _{-l}^{V}$). Fix $%
q\geq 2$, and consider the Hermite-subordinated field $H_{q}\left[ V\right]
\left( \theta \right) $ $=H_{q}\left( V\left( \theta \right) \right) $,
where $q$ is the $q$th Hermite polynomial. The Fourier decomposition of $%
H_{q}\left[ V\right] $ is $H_{q}\left[ V\right] \left( \theta \right) $ $=$ $%
\sum_{l\in \mathbb{Z}}a_{l}^{\left( q\right) }e^{\mathrm{i}l\theta }$. We
write $N,N^{\prime }$ to indicate a pair of independent centered Gaussian
random variables with common variance equal to $1/2$: in \cite{MaPe} it is
proved that to have the \textsl{high-frequency CLT}%
\begin{equation}
\frac{a_{l}^{\left( q\right) }}{Var\left( a_{l}^{\left( q\right) }\right)
^{1/2}}=\frac{\int_{\mathbb{T}}H_{q}\left[ V\right] \left( \theta \right)
e^{-\mathrm{i}l\theta }d\theta }{Var\left( a_{l}^{\left( q\right) }\right)
^{1/2}}\underset{l\rightarrow \infty }{\overset{law}{\rightarrow }}N+%
\mathrm{i}N^{\prime }  \label{clll}
\end{equation}%
it is \textsl{necessary and sufficient} that, for every $p=1,...,q-1$,
\begin{equation}
\lim_{l\rightarrow +\infty }\sup_{j\in \mathbb{Z}}\mathbb{P}\left[
U_{p}=j\mid U_{q}=l\right] =0\text{,}  \label{rwAbel}
\end{equation}%
where $\left\{ U_{n}:n\geq 0\right\} $ is the random walk on $\mathbb{Z}$
whose law is given by $U_{0}=0$ and $$\mathbb{P}\left[ U_{n+1}=j\mid U_{n}=k%
\right] =\Gamma _{j-k}^{V}.$$ Note that the law of the random variable $%
U_{n} $ has trivially the form of a \textsl{convolution }of the
coefficients $\Gamma _{l}^{V}$ (see also the discussion below).
The correspondence between (\ref{clll}) and the \textquotedblleft
random walk bridge\textquotedblright\ (\ref{rwAbel}) has been used
in \cite{MaPe} to establish explicit conditions on the power
spectrum $\{ \Gamma _{l}^{V}\} $ to have that (\ref{clll}) holds.

In what follows, we shall unveil (and apply) an analogous connection between
the CLTs proved in Section \ref{S: CLT} and some specific convolutions and
random walks on $\widehat{SO\left( 3\right) }$.

\subsection{Convolutions on $\widehat{SO\left( 3\right) }$}

In the light of Part 3 of Theorem \ref{teo1} and by Corollary \ref{C :
PunctualCLT}, we will focus on the sequence $\left\{ a_{l0;q}:l\geq
0\right\} $ (see (\ref{Short1.5})), whose behaviour as $l\rightarrow +\infty
$ yields an asymptotic characterization of the fields $T_{l}^{\left(
q\right) }\left( \cdot \right) $ defined in (\ref{Short2}). A crucial point
is the simple fact that the numerator of (\ref{as11}), for $m=0$, can be
developed as a multiple sum involving products of four generalized Gaunt
integrals, so that, by (\ref{megagaunt}), the asymptotic expressions
appearing in Theorem \ref{teo1} can be studied by means of the properties of
linear combinations of products of Clebsch-Gordan coefficients. As
anticipated, a very efficient tool for our analysis will be the use of
convolutions on $\mathbb{N}$, that we endow with an hypergroup structure
isomorphic to $\widehat{SO\left( 3\right) }$, i.e. the dual of $SO\left(
3\right) $. This will be the object of the subsequent discussion.

From now on, and for the rest of the section, we shall fix a sequence $%
\left\{ C_{l}:l\geq 0\right\} $, representing the angular power spectrum of
an isotropic centered, normalized Gaussian field $T$ over $\mathbb{S}^{2}$,
as in Section \ref{S : GaussSub}. Whenever convenient we shall write
\begin{equation}
\Gamma _{l}\triangleq (2l+1)C_{l}\text{ , \ }l\geq 0\text{,}
\label{FreqSpectrum}
\end{equation}%
so that, for $l\geq 1$ and up to the constant $1/4\pi $, the parameter $%
\Gamma _{l}$ represents the variance of the projection of the Gaussian field
$T$ in (\ref{specrap}) on the frequency $l$: indeed, according to Lemma \ref%
{L : Pl}, $Var(T_{l})=\Gamma _{l}/4\pi $. Also, we define the following
\textsl{convolutions} of the coefficients $\Gamma _{l}$ (in the following
expressions, the sums over indices $l_{i}$, $L_{i}$ ... range implicitly
from $0$ to $+\infty $):%
\begin{align}
& \!\!\!\!\!\!\!\!\widehat{\Gamma }_{2,l}\!=\!\sum_{l_{1},l_{2}}\Gamma
_{l_{1}}\Gamma _{l_{2}}(C_{l_{1}0l_{2}0}^{l0})^{2}\text{ ,}\qquad \qquad
\qquad \qquad \qquad \qquad  \label{cgconv-2} \\
& \!\!\!\!\!\!\!\!\widehat{\Gamma }_{3,l}\!=\!\sum_{L_{1},l_{3}}\!\widehat{%
\Gamma }_{2,L_{1}}\Gamma
_{l_{3}}(C_{L_{1}0l_{3}0}^{l0})^{2}\!=\!\sum_{l_{1},l_{2},l_{3}}\!\Gamma
_{l_{1}}\Gamma _{l_{2}}\Gamma
_{l_{3}}\sum_{L_{1}}(C_{l_{1}0l_{2}0l_{3}0}^{L_{1}l;0})^{2}\text{, ...}
\label{cgconv-1}
\end{align}%
$\ $ \\[-30pt]
\begin{equation}
\widehat{\Gamma }_{q,l}\!=\!\sum_{L_{1},l_{q}}\!\widehat{\Gamma }%
_{q-1,L_{q-1}}\!\Gamma
_{l_{q}}\!(C_{L_{q-1}0l_{q}0}^{l0})^{2}\!=\!\sum_{l_{1}...l_{q}}\!\Gamma
_{l_{1}}\!...\!\Gamma
_{l_{q}}\!\sum_{L_{1}\!...\!L_{q-2}}%
\!(C_{l_{1}0...l_{q}0}^{L_{1}...L_{q-2}l;0})^{2}.  \label{cgconv}
\end{equation}%
We stress that the equalities in formulae (\ref{cgconv-1}) and (\ref{cgconv}%
) are consequences of (\ref{0_conv}). It will be also convenient to define a
\textsl{*-convolution} of order $p\geq 2$ as:%
\begin{eqnarray}
\widehat{\Gamma }_{p,l;l_{1}}^{\ast } &=&\sum_{l_{2}}\cdot \cdot \cdot
\sum_{l_{p}}\Gamma _{l_{2}}\cdot \cdot \cdot \Gamma
_{l_{p}}\sum_{L_{1}...L_{p-2}}\left\{
C_{l_{1}0l_{2}0}^{L_{1}0}C_{L_{1}0l_{3}0}^{L_{2}0}...C_{L_{p-2}0l_{p}0}^{l0}%
\right\} ^{2}  \notag \\
&=&\sum_{l_{2}}\cdot \cdot \cdot \sum_{l_{p}}\Gamma _{l_{2}}\cdot \cdot
\cdot \Gamma _{l_{p}}\sum_{L_{1}...L_{p-2}}\left\{
C_{l_{1}0l_{2}0...l_{p}0}^{L_{1}...l;0}\right\} ^{2}.  \label{starconv}
\end{eqnarray}%
Note that the number of sums following the equalities in formula
(\ref{starconv}) is $p-1$: however, we choose to keep the symbol
$p$ to denote *-convolutions, since it
is consistent with the probabilistic representations given in formulae (\ref%
{pint1}) and (\ref{pint2}) below. The above *-convolution has the following
property: for every $p=2,...,q$%
\begin{equation*}
\sum_{l_{1}}\widehat{\Gamma }_{q+1-p,l_{1}}\widehat{\Gamma }%
_{p,l;l_{1}}^{\ast }=\widehat{\Gamma }_{q,l}\text{ , and, in particular, }%
\sum_{l_{1}}\Gamma _{l_{1}}\widehat{\Gamma }_{q,l;l_{1}}^{\ast }=\widehat{%
\Gamma }_{q,l}\text{ .}
\end{equation*}%
The *-convolution of order 2 can be written more explicitly as
\begin{equation}
\widehat{\Gamma }_{2,l;l_{1}}^{\ast }=\sum_{l_{2}}\Gamma
_{l_{2}}(C_{l_{1}0l_{2}0}^{l0})^{2}.  \label{cgconv1}
\end{equation}

\textbf{Remarks. (1) }(\textit{Probabilistic interpretation of the
convolutions}) Write first $\Gamma _{\ast }\triangleq \sum_{l}\Gamma _{l}$
(plainly, in our framework $\Gamma _{\ast }=4\pi $, but the following
discussion applies to coefficients $\left\{ \Gamma _{l}\right\} $ such that $%
\Gamma _{\ast }>0$ is arbitrary) so that $l\longmapsto \Gamma _{l}/\Gamma
_{\ast }$ defines a probability on $\mathbb{N}$. The second orthonormality
relation in (\ref{orto1}) implies that, for fixed $l_{1},l_{2}$, the
application $l\longmapsto (C_{l_{1}0l_{2}0}^{l0})^{2}$ is a probability on $%
\mathbb{N}$. Now define the law of a (homogeneous) \textsl{Markov chain }$%
\left\{ Z_{n}:n\geq 1\right\} $ as follows:
\begin{align}
\mathbb{P}\left\{ Z_{1}=l\right\} & =\Gamma _{l}/\Gamma _{\ast }  \label{rw1}
\\
\mathbb{P}\left\{ Z_{n+1}=l\mid Z_{n}=L\right\} & =\sum_{l_{0}}\frac{\Gamma
_{l_{0}}}{\Gamma _{\ast }}\left( C_{l_{0}0L0}^{l0}\right) ^{2}\text{.}
\label{rw2}
\end{align}%
It is clear that $\mathbb{P}\left\{ Z_{q}=l\right\} =\widehat{\Gamma }%
_{q,l}/\left( \Gamma _{\ast }\right) ^{q}$, and also, for $p\geq 2$,%
\begin{eqnarray}
\frac{\widehat{\Gamma }_{p,l:l_{1}}^{\ast }}{\left( \Gamma _{\ast }\right)
^{p-1}} &=&\mathbb{P}\left\{ Z_{p}=l\mid Z_{1}=l_{1}\right\}  \label{pint1}
\\
\frac{\widehat{\Gamma }_{p,l:l_{1}}^{\ast }\widehat{\Gamma }_{q+1-p,l_{1}}}{%
\left( \Gamma _{\ast }\right) ^{q}} &=&\mathbb{P}\left\{ \left(
Z_{q}=l\right) \cap \left( Z_{q+1-p}=l_{1}\right) \right\} \text{ \ \ (}q>p-1%
\text{).}  \label{pint2}
\end{eqnarray}%
The following quantity will be crucial in the subsequent sections:%
\begin{equation}
\frac{\widehat{\Gamma }_{q+1-p,l;\lambda }^{\ast }\widehat{\Gamma }%
_{p,\lambda }}{\sum_{L}\widehat{\Gamma }_{p,L}\widehat{\Gamma }%
_{q+1-p,l;L}^{\ast }}=\frac{\widehat{\Gamma }_{q+1-p,l;\lambda }^{\ast }%
\widehat{\Gamma }_{p,\lambda }}{\widehat{\Gamma }_{q,l}}=\mathbb{P}\left\{
Z_{p}=\lambda \mid Z_{q}=l\right\} \text{ \ (}q>p\text{);}  \label{impint}
\end{equation}%
observe that the last relation in (\ref{impint}) derives from $$\widehat{%
\Gamma }_{q+1-p,l;\lambda }^{\ast }/\left( \Gamma _{\ast }\right) ^{q-p}=%
\mathbb{P\{}\left( Z_{q+1-p}=l\right)  | \left( Z_{1}=\lambda
\right) \} = \mathbb{P}\left\{ \left( Z_{q}=\lambda \right) |
\left( Z_{p}=l\right) \right\},$$where the last equality is a
consequence of the
homogeneity of $Z$. Note also that we can identify each natural number $%
l\geq 0$ with an irreducible representation of $SO\left( 3\right) $. It
follows that the formal addition $l_{1}+l_{2}\triangleq
\sum_{l}l(C_{l_{1}0l_{2}0}^{l0})^{2}$ may be used to endow $\widehat{%
SO\left( 3\right) }$ with an hypergroup structure. In this sense, we can
interpret the chain $\left\{ Z_{n}:n\geq 1\right\} $ as a \textsl{random walk%
} on the hypergroup $\widehat{SO\left( 3\right) }$, in a spirit similar to
\cite{GKR}. In Section \ref{S : Roynette}, we will discuss a physical
interpretation of these convolutions and establish a precise connection
between the objects introduced in this section and the notion of convolution
appearing in \cite{GKR}.

(\textbf{2}) (\textit{A comparison with the Abelian case}) In
\cite{MaPe}, where we dealt with similar problems in the case of
homogenous spaces of Abelian groups, we used extensively
convolutions over $\mathbb{Z}$. This kind of convolutions, that we
note $_{A}\widehat{\Gamma }_{q,l}$ ($q\geq 2$, $l\in \mathbb{Z}$)
are obtained as in (\ref{cgconv-2})-(\ref{cgconv1}), by taking
sums over $\mathbb{Z}$ (instead than over $\mathbb{N}$) and by
replacing the Clebsch-Gordan symbols $( C_{l_{1}0l_{2}0}^{l0} )
^{2}$ with the indicator $\mathbf{1}_{l_{1}+l_{2}=l}$. Note that
these indicator functions do indeed provide the Clebsch-Gordan
coefficients associated with the irreducible representations of
the $1$-dimensional torus $\mathbb{T=[}0,2\mathbb{\pi )}$,
regarded as a compact Abelian group with group operation
$xy=\left( x+y\right) \left( \mathbf{mod}(2\pi )\right) $
(this is equivalent to the trivial relation $e^{\mathrm{i}l_{1}x}e^{\mathrm{i%
}l_{2}x}=\sum_{l}\mathbf{1}_{l_{1}+l_{2}=l}e^{\text{i}lx}=e^{\text{i}\left(
l_{1}+l_{2}\right) x}$)$.$ Note also that in the Abelian case one has $_{A}%
\widehat{\Gamma }_{p,l;l_{1}}^{\ast }$ $=$ $_{A}\widehat{\Gamma }%
_{p,l-l_{1}}.$ Also, if $\Gamma _{l}=\Gamma _{l}^{V}$, where $\{
\Gamma
_{l}^{V}\} $ is the power spectrum of the Gaussian field $V$ on $%
\mathbb{T}$ appearing in (\ref{clll}), one has that
$_{A}\widehat{\Gamma _{q,l}^{V}}$ $=$ $\mathbb{P}\left[
U_{q}=l\right] $, where $\left\{ U_{n}\right\} $ is the random
walk given in (\ref{rwAbel}).

\subsection{The case $q=2$\label{SS : Q2}}

In this subsection, we provide a sufficient condition on the spectrum $%
\left\{ C_{l}:l\geq 0\right\} $ (or, equivalently, on $\left\{ \Gamma
_{l}:l\geq 0\right\} $, as defined in (\ref{FreqSpectrum})) to have the CLT (%
\ref{as00}) in the quadratic case $q=2$. This condition is stated in
Proposition \ref{cglem}, and is obtained via some preliminary (technical)
computations and lemmas.

According to Part 3 of Theorem \ref{teo1}, to deal with (\ref{as00}) we
shall find sufficient conditions to have that (\ref{as11}) takes place for $%
m=0$, $q=2$ and $p=1$. From (\ref{VAR2}) we deduce
\begin{equation}
\widetilde{C}_{l}^{\left( 2\right) }=2\left\{ \sum_{l_{1},l_{2}=0}^{\infty }%
\frac{(2l_{1}+1)(2l_{2}+1)}{4\pi (2l+1)}%
C_{l_{1}}C_{l_{2}}(C_{l_{1}0l_{2}0}^{l0})^{2}\right\} ^{2}.  \label{den2}
\end{equation}%
On the other hand, the multiple sums appearing in the numerator of (\ref%
{as11}) become ($q=2$, $p=1$)%
\begin{eqnarray}
&&\sum_{j_{1},n_{1},j_{2},n_{2}}\!\!\!C_{j_{1}}\!C_{j_{2}}\!\left\vert
\sum_{l_{1},m_{1}}C_{l_{1}}\!\mathcal{G}\!\left\{
l_{1},m_{1};j_{1},n_{1};l,-m\right\} \!\mathcal{G}\!\left\{
l_{1},m_{1};j_{2},n_{2};l,-m\right\} \right\vert ^{2}  \notag \\
&=&\! \! \frac{1}{\left[ 4\pi (2l+1)\right] ^{2}}%
\sum_{j_{1},n_{1},j_{2},n_{2}}\! \Gamma _{j_{1}}\Gamma _{j_{2}}\!\left\vert
\sum_{l_{1},m_{1}}\Gamma
_{l_{1}}C_{l_{1}m_{1}j_{1}n_{1}}^{lm}C_{l_{1}0j_{1}0}^{l0}C_{l_{1}m_{1}j_{2}n_{2}}^{lm}C_{l_{1}0j_{2}0}^{l0}\right\vert ^{2}
\notag \\
&=&\frac{1}{\left[ 4\pi (2l+1)\right] ^{2}}\sum_{j_{1},n_{1},j_{2}}\Gamma
_{j_{1}}\Gamma _{j_{2}}\left\vert \sum_{l_{1},m_{1}}\Gamma
_{l_{1}}C_{l_{1}m_{1}j_{1}n_{1}}^{lm}C_{l_{1}0j_{1}0}^{l0}C_{l_{1}m_{1}j_{2}n_{2}}^{lm}C_{l_{1}0j_{2}0}^{l0}\right\vert ^{2}
\notag \\
&=&\frac{1}{\left[ 4\pi (2l+1)\right] ^{2}}\sum_{j_{1}j_{2}}%
\sum_{l_{1}l_{2}}\Gamma _{j_{1}}\Gamma _{j_{2}}\Gamma _{l_{1}}\Gamma
_{l_{2}}C_{l_{1}0j_{1}0}^{l0}C_{l_{1}0j_{2}0}^{l0}C_{l_{2}0j_{1}0}^{l0}C_{l_{2}0j_{2}0}^{l0}
\notag \\
&&\text{\ \ \ \ \ \ \ }\times \left\{
\sum_{n_{1}n_{2}m_{1}m_{2}}C_{l_{1}m_{1}j_{1}n_{1}}^{lm}C_{l_{1}m_{1}j_{2}n_{2}}^{lm}
C_{l_{2}m_{2}j_{1}n_{1}}^{lm}C_{l_{2}m_{2}j_{2}n_{2}}^{lm}\right\}.
\label{num2}
\end{eqnarray}%
Now, from \cite[Eq. 8.7.4.20]{VMK} we deduce that%
\begin{eqnarray*}
&&\sum_{n_{1}n_{2}}%
\sum_{m_{1}m_{2}}C_{l_{1}m_{1}j_{1}n_{1}}^{lm}C_{l_{1}m_{1}j_{2}n_{2}}^{lm}C_{l_{2}m_{2}j_{1}n_{1}}^{lm}C_{l_{2}m_{2}j_{2}n_{2}}^{lm}
\\
&=&(-1)^{\beta }\sum_{s\sigma }(2s+1)(2l+1)(C_{lms\sigma }^{lm})^{2}\left\{
\begin{array}{ccc}
l_{1} & j_{1} & l \\
l & s & l_{2}%
\end{array}%
\right\} \left\{
\begin{array}{ccc}
l_{1} & j_{2} & l \\
l & s & l_{2}%
\end{array}%
\right\} \\
&=&\sum_{s}(2s+1)(2l+1)(C_{lms0}^{lm})^{2}\left\{
\begin{array}{ccc}
l_{1} & j_{1} & l \\
l & s & l_{2}%
\end{array}%
\right\} \left\{
\begin{array}{ccc}
l_{1} & j_{2} & l \\
l & s & l_{2}%
\end{array}%
\right\} \text{,}
\end{eqnarray*}%
where $\beta =l_{1}+j_{1}+l_{2}+j_{2}$, and we used the Wigner $6j$ symbols,
as defined in (\ref{wig6j}). The last equality follows because the quantity $%
l_{1}+j_{1}$ $+l_{2}+j_{2}$ $+2l$ must be necessarily even, and therefore $%
\beta $ must be even as well. It should be noted that the role of the pairs $%
(j_{1},n_{1})$ and $(l_{1},m_{1})$ is perfectly symmetric, so we obtain also
\begin{eqnarray*}
&&\sum_{n_{1}n_{2}}%
\sum_{m_{1}m_{2}}C_{l_{1}m_{1}j_{1}n_{1}}^{lm}C_{l_{1}m_{1}j_{2}n_{2}}^{lm}C_{l_{2}m_{2}j_{1}n_{1}}^{lm}C_{l_{2}m_{2}j_{2}n_{2}}^{lm}
\\
&=&\sum_{s}(2s+1)(2l+1)(C_{lms0}^{lm})^{2}\left\{
\begin{array}{ccc}
j_{1} & l_{1} & l \\
l & s & j_{2}%
\end{array}%
\right\} \left\{
\begin{array}{ccc}
j_{1} & l_{2} & l \\
l & s & j_{2}%
\end{array}%
\right\} \text{ ,}
\end{eqnarray*}%
whence
\begin{eqnarray}
&&\sum_{s}(2s+1)(2l+1)(C_{lms0}^{lm})^{2}\left\{
\begin{array}{ccc}
j_{1} & l_{1} & l \\
l & s & j_{2}%
\end{array}%
\right\} \left\{
\begin{array}{ccc}
j_{1} & l_{2} & l \\
l & s & j_{2}%
\end{array}%
\right\}  \label{pau1} \\
&\equiv &\sum_{s}(2s+1)(2l+1)(C_{lms0}^{lm})^{2}\left\{
\begin{array}{ccc}
l_{1} & j_{1} & l \\
l & s & l_{2}%
\end{array}%
\right\} \left\{
\begin{array}{ccc}
l_{1} & j_{2} & l \\
l & s & l_{2}%
\end{array}%
\right\} \text{ .}  \label{pau2}
\end{eqnarray}

\begin{lemma}
\label{lemma6j}For all integers $l,l_{1},l_{2},j_{1},j_{2}$ it holds that,
for some positive constant $c$,
\begin{eqnarray*}
&&\sum_{s}(2s+1)(2l+1)(C_{l0s0}^{l0})^{2}\left\{
\begin{array}{ccc}
l_{1} & j_{1} & l \\
l & s & l_{2}%
\end{array}%
\right\} \left\{
\begin{array}{ccc}
l_{1} & j_{2} & l \\
l & s & l_{2}%
\end{array}%
\right\} \\
&\leq &c\max \left[ \frac{1}{\sqrt[5]{2l_{1}+1}}\wedge \frac{1}{\sqrt[5]{%
2l_{2}+1}},\frac{1}{\sqrt[5]{2j_{1}+1}}\wedge \frac{1}{\sqrt[5]{2j_{2}+1}}%
\right] .
\end{eqnarray*}
\end{lemma}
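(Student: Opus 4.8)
The plan is first to use symmetry to collapse the two--sided structure to a single one--sided estimate, and then to establish that estimate by combining the orthogonality relation for the Wigner $6j$ symbols with pointwise decay bounds for the Clebsch--Gordan coefficients. Permuting columns of a $6j$ symbol and simultaneously swapping the upper and lower entries of two columns are allowed symmetries (\cite[Ch.\ 9]{VMK}); applied here they show that the left--hand side is invariant under $l_1\leftrightarrow l_2$, so that $\frac{1}{\sqrt[5]{2l_1+1}}\wedge\frac{1}{\sqrt[5]{2l_2+1}}=\frac{1}{\sqrt[5]{2\max(l_1,l_2)+1}}$ may be used freely (the identity (\ref{pau1})$=$(\ref{pau2}) recorded above plays the analogous role for the pair $(j_1,j_2)$). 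Since the right--hand side of the Lemma is the \emph{maximum} of the two bracketed terms, it dominates each of them separately; it therefore suffices to prove the single bound
$$\sum_s (2s+1)(2l+1)(C_{l0s0}^{l0})^2 \left\{\begin{array}{ccc} l_1 & j_1 & l \\ l & s & l_2\end{array}\right\}\left\{\begin{array}{ccc} l_1 & j_2 & l \\ l & s & l_2\end{array}\right\}\le \frac{c}{\sqrt[5]{2\max(l_1,l_2)+1}}.$$

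For this one--sided bound I would first separate the two $6j$ factors by the Cauchy--Schwarz inequality in $s$, reducing matters to the diagonal sums $\sum_s (2s+1)(C_{l0s0}^{l0})^2\left\{\begin{array}{ccc} l_1 & j_i & l \\ l & s & l_2\end{array}\right\}^2$, $i=1,2$. These can be attacked by two complementary devices. On one hand, bounding $(C_{l0s0}^{l0})^2\le 1$ (the Clebsch--Gordan coefficients being entries of a unitary matrix) and invoking the orthogonality relation for the $6j$ symbols yields, after the column/row rearrangement that moves $s$ into the summed slot, a clean factor $(2j_i+1)^{-1}$ times a triangle indicator. On the other hand, the triangle inequalities carried by the $6j$ symbol confine the range of $s$: the factor $\left\{\begin{array}{ccc} l_1 & j_i & l \\ l & s & l_2\end{array}\right\}$ forces $|l_1-l_2|\le s\le l_1+l_2$, so that in the imbalanced regime $\min(l_1,l_2)\ll\max(l_1,l_2)$ the variable $s$ is pinned near $\max(l_1,l_2)$, where the pointwise estimate $(C_{l0s0}^{l0})^2\le c\,(2s+1)^{-1/2}$ for the all--zero coefficient (which follows from the closed form recalled before (\ref{orto1}); see also \cite{MarPTRF}) extracts a factor decaying in $\max(l_1,l_2)$.

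The genuine difficulty, and the reason for the (non--sharp) fifth root, lies in the balanced regime $l_1\asymp l_2$, in which many values of $s$ contribute---so the term--by--term support argument is wasteful---while the clean Cauchy--Schwarz/orthogonality bound still carries the harmful prefactor $2l+1$ that must be absorbed. The plan is to split the $s$--summation at a threshold $s\asymp(2\max(l_1,l_2)+1)^{\theta}$: below the threshold one uses the uniform bound $(C_{l0s0}^{l0})^2\le1$ together with exact $6j$--orthogonality, and above it one uses the pointwise Clebsch--Gordan decay together with pointwise bounds on the $6j$ symbols to eliminate the factor $2l+1$; optimizing $\theta$ balances the two contributions. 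The main obstacle is precisely this interplay---the weight $(C_{l0s0}^{l0})^2$ destroys exact orthogonality, so one cannot keep the $(2j_i+1)^{-1}$ gain and discard $2l+1$ at the same time---and the bookkeeping of the triangle constraints that fix the effective width of the $s$--window is what ultimately pins down the exponent. I expect the sharp exponent to be larger than $1/5$, but $1/5$ is all that is required, since the applications in Section~\ref{SS : Q2} need only summability of these estimates, not their optimality.
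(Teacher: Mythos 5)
Your opening reduction is where the proof breaks, and it breaks irreparably: the ``single bound''
$$\sum_{s}(2s+1)(2l+1)(C_{l0s0}^{l0})^{2}\left\{
\begin{array}{ccc}
l_{1} & j_{1} & l \\
l & s & l_{2}
\end{array}
\right\} \left\{
\begin{array}{ccc}
l_{1} & j_{2} & l \\
l & s & l_{2}
\end{array}
\right\} \leq \frac{c}{\sqrt[5]{2\max (l_{1},l_{2})+1}}$$
that you propose to establish is false, so no choice of threshold or bookkeeping can save the rest of the plan. Take $j_{1}=j_{2}=0$: the triangle conditions of the $6j$ symbols force $l_{1}=l_{2}=l$, each symbol collapses to $(-1)^{s}/(2l+1)$, and since $(2s+1)(C_{l0s0}^{l0})^{2}=(2l+1)(C_{l0l0}^{s0})^{2}$ the orthogonality relation (\ref{orto1}) gives $\sum_{s}(2s+1)(C_{l0s0}^{l0})^{2}=2l+1$; hence the left-hand side equals exactly $1$ for every $l$. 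This is precisely the computation recorded in the paper's Remark immediately after the Lemma (via \cite[Eq. 8.5.1.2]{VMK}), and it contradicts your target inequality as soon as $l$ is large. The logical slip is subtle: proving ``LHS $\leq A$'' would indeed imply ``LHS $\leq \max [A,B]$'', but here neither ``LHS $\leq A$'' nor ``LHS $\leq B$'' holds for all index configurations. Which of the two can be proved depends on the relative sizes of the pair $(l_{1},l_{2})$ versus the pair $(j_{1},j_{2})$; the $\max$ in the statement encodes exactly this dichotomy, i.e.\ the bound really decays in $\min (\max (l_{1},l_{2}),\max (j_{1},j_{2}))$, not in $\max (l_{1},l_{2})$ alone.

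The correct argument (the paper's) therefore starts from a case split rather than a uniform choice of side: one assumes, without loss of generality and by invoking the identity (\ref{pau1})$=$(\ref{pau2}) to swap the two pairs otherwise, that the $j$'s dominate the $l$'s, and only then runs the threshold decomposition in $s$. This WLOG is not cosmetic; it is what makes the prefactor $2l+1$ absorbable, since under $j_{1},j_{2}\geq l_{1},l_{2}$ the triangle inequalities yield $j_{1},j_{2}>l/2$, so that the Cauchy--Schwarz/orthogonality bound $(2l+1)/\sqrt{j_{1}j_{2}}$ is $O(1)$. You yourself flagged $2l+1$ as the obstacle, but in the regime your reduction ignores (small $j$'s) it cannot be absorbed at all --- which is exactly why the unconditional one-sided bound fails there. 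The rest of your toolkit (splitting the $s$-sum at a power threshold, the decay $C_{l0s0}^{l0}=O(s^{-1/4})$ of Lemma \ref{cglem}, $6j$ orthogonality, and optimizing the exponent to get $1/5$) is essentially the paper's, although you have the regimes' devices interchanged: the paper bounds the small-$s$ block by the number of terms times a pointwise bound on the product of $6j$ symbols, and the large-$s$ block by the Clebsch--Gordan decay combined with orthogonality, with $\alpha =2/5$ balancing $l_{1}^{2\alpha -1}$ against $l_{1}^{-\alpha /2}$.
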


\begin{proof}
Assume without loss of generality $j_{1},j_{2}>l_{1}$ otherwise we focus on
(\ref{pau2}) rather than (\ref{pau1}). For $\alpha \in (0,1),$ we have that%
\begin{equation*}
\sum_{s}(2s+1)(2l+1)(C_{l0s0}^{l0})^{2}\left\{
\begin{array}{ccc}
l_{1} & j_{1} & l \\
l & s & l_{2}%
\end{array}%
\right\} \left\{
\begin{array}{ccc}
l_{1} & j_{2} & l \\
l & s & l_{2}%
\end{array}%
\right\}
\end{equation*}%
\begin{eqnarray*}
&\leq &\sum_{s\leq l_{1}^{\alpha }}(2s+1)(2l+1)(C_{l0s0}^{l0})^{2}\left\{
\begin{array}{ccc}
l_{1} & j_{1} & l \\
l & s & l_{2}%
\end{array}%
\right\} \left\{
\begin{array}{ccc}
l_{1} & j_{2} & l \\
l & s & l_{2}%
\end{array}%
\right\} \\
&&+\sum_{s>l_{1}^{\alpha }}(2s+1)(2l+1)(C_{l0s0}^{l0})^{2}\left\{
\begin{array}{ccc}
l_{1} & j_{1} & l \\
l & s & l_{2}%
\end{array}%
\right\} \left\{
\begin{array}{ccc}
l_{1} & j_{2} & l \\
l & s & l_{2}%
\end{array}%
\right\}
\end{eqnarray*}\\[-30pt]
\begin{eqnarray*}
&\leq &Cl_{1}^{2\alpha }(2l+1)\max_{s\leq l_{1}^{\alpha }}\left\{
\begin{array}{ccc}
l_{1} & j_{1} & l \\
l & s & l_{2}%
\end{array}%
\right\} \left\{
\begin{array}{ccc}
l_{1} & j_{2} & l \\
l & s & l_{2}%
\end{array}%
\right\} \\
&&+\left\{ \max_{s>l_{1}^{\alpha }}(C_{l0s0}^{l0})^{2}\right\}
\sum_{s}(2s+1)(2l+1)\left\{
\begin{array}{ccc}
l_{1} & j_{1} & l \\
l & s & l_{2}%
\end{array}%
\right\} \left\{
\begin{array}{ccc}
l_{1} & j_{2} & l \\
l & s & l_{2}%
\end{array}%
\right\}
\end{eqnarray*}\\[-30pt]
\begin{eqnarray*}
\leq Cl_{1}^{2\alpha }(2l+1)\frac{1}{(2l+1)(2l_{1}+1)}+\frac{C}{%
l_{1}^{\alpha /2}}\frac{2l+1}{\sqrt{j_{1}j_{2}}}=O(l_{1}^{2\alpha
-1}+l_{1}^{-\alpha /2})=O(\frac{1}{\sqrt[5]{l_{1}}}),
\end{eqnarray*}%
where the last equality has been obtained by setting $\alpha = 2/5$.
The second last step follows because $j_{1},j_{2}\geq l_{1},l_{2}$ implies $%
j_{1},j_{2}>l/2,$ in view of the triangle inequalities $%
l_{1}+j_{1},l_{1}+j_{2}>l$; also, we used the inequality\emph{\ }$\left\{
\max_{s>l_{1}^{\alpha }}(C_{l0s0}^{l0})^{2}\right\} \leq l_{1}^{-\alpha /2}$%
, see Lemma \ref{cglem} below. The bound with $l_{2}$ can be obtained by
exploiting the symmetries of the $6j$ coefficients; in particular, we recall
that (see (\cite[Eq. 9.4.2.2]{VMK}))%
\begin{equation*}
\left\{
\begin{array}{ccc}
l_{1} & j_{1} & l \\
l & s & l_{2}%
\end{array}%
\right\} \equiv \left\{
\begin{array}{ccc}
l & j_{1} & l_{2} \\
l_{1} & s & l%
\end{array}%
\right\} \equiv \left\{
\begin{array}{ccc}
l_{2} & j_{1} & l \\
l & s & l_{1}%
\end{array}%
\right\} \text{ .}
\end{equation*}
\end{proof}

\textbf{Remark. }The bound provided in Lemma (\ref{lemma6j}) is
sufficient for our purposes below and we did not investigate its
efficiency in detail. We remark, however, by setting
$j_{1}=j_{2}=0$, we have explicitly (see \cite[Eq. 8.5.1.2]{VMK})
\begin{align}
\sum_{n_{1}n_{2}m_{1}m_{2}}C_{l_{1}m_{1}j_{1}n_{1}}^{lm}C_{l_{1}m_{1}j_{2}n_{2}}^{lm}C_{l_{2}m_{2}j_{1}n_{1}}^{lm}C_{l_{2}m_{2}j_{2}n_{2}}^{lm}
\notag \\
=\sum_{m_{1}m_{2}}C_{l_{1}m_{1}00}^{lm}C_{l_{1}m_{1}00}^{lm}C_{l_{2}m_{2}00}^{lm}C_{l_{2}m_{2}00}^{lm}\equiv 1%
\text{ .}  \notag
\end{align}%


\begin{lemma}
\label{cglem} As $l_{1}\rightarrow +\infty $, $C_{l0l_{1}0}^{l0}=O(\frac{1}{%
\sqrt[4]{l_{1}}})$. 
\end{lemma}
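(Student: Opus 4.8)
The plan is to reduce everything to the explicit closed form for the Clebsch--Gordan coefficients with vanishing projections displayed earlier in this section, and then to extract the rate by a Stirling estimate that is kept uniform in $l$. Setting $a=l$, $b=l_{1}$, $c=l$ in the formula for $C_{l_{1}0l_{2}0}^{l_{3}0}$, the coefficient vanishes unless $2l+l_{1}$ is even, i.e. unless $l_{1}$ is even (for odd $l_{1}$ the bound is trivial); so I write $l_{1}=2k$ with $0\le k\le l$, the range forced by the triangle conditions $|l-l_{1}|\le l\le l+l_{1}$. Substituting the four half-sums $(l+k,\,k,\,l-k,\,k)$ and the four factorial arguments $(2k)!,(2l-2k)!,(2k)!,(2l+2k+1)!$, and using $(2k)!/(k!)^{2}=\binom{2k}{k}$, one obtains the compact expression
$$|C_{l0l_{1}0}^{l0}|=\sqrt{2l+1}\,\binom{2k}{k}\,\frac{(l+k)!}{(l-k)!}\,\sqrt{\frac{(2l-2k)!}{(2l+2k+1)!}}.$$

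Next I would estimate the right-hand side by Stirling's formula, using the two-sided non-asymptotic version $\sqrt{2\pi n}\,(n/e)^{n}\le n!\le \sqrt{2\pi n}\,(n/e)^{n}e^{1/(12n)}$ (valid for $n\ge1$), so that every correction factor $e^{1/(12n)}$ is absorbed into the absolute constant $e^{1/12}$. A direct bookkeeping shows that the $O(l)$ ``entropy'' contributions $n\ln n$ and the linear terms $-n$ cancel identically among the four factorial blocks, leaving only the $\tfrac12\ln(2\pi n)$ terms; collecting them gives, up to a bounded multiplicative constant,
$$|C_{l0l_{1}0}^{l0}|\ \asymp\ \frac{1}{\sqrt{\pi k}}\Big(\frac{l}{l+k}\Big)^{1/2}\Big(\frac{l+k}{l-k}\Big)^{1/4},\qquad 1\le k\le l-1.$$
This already exhibits the two regimes: for $k\ll l$ the last two factors are $\approx1$ and the coefficient decays like $k^{-1/2}\asymp l_{1}^{-1/2}$, while as $k\uparrow l$ the factor $\big((l+k)/(l-k)\big)^{1/4}$ grows and slows the decay.

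To finish I would turn this into the claimed $O(l_{1}^{-1/4})$ bound. Writing $|C_{l0l_{1}0}^{l0}|\,k^{1/4}\lesssim \pi^{-1/2}\big(\tfrac{l}{l+k}\big)^{1/2}\big(\tfrac{l+k}{k(l-k)}\big)^{1/4}$ and using $\tfrac{l}{l+k}\le1$, it suffices to bound $\tfrac{l+k}{k(l-k)}$ by an absolute constant. Since $\tfrac{l+k}{k(l-k)}\le \tfrac{2l}{k(l-k)}$ and $k(l-k)$ is minimized over $1\le k\le l-1$ at the endpoints, where it equals $l-1$, one gets $\tfrac{l+k}{k(l-k)}\le 2l/(l-1)\le4$ for $l\ge2$; hence $|C_{l0l_{1}0}^{l0}|=O(k^{-1/4})=O(l_{1}^{-1/4})$. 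The single remaining value is the boundary $k=l$ (that is $l_{1}=2l$), where $(l-k)!=0!$ makes the generic Stirling step degenerate; there I would estimate $|C_{l0,2l,0}^{l0}|=\sqrt{2l+1}\,\binom{2l}{l}\,(2l)!\,/\sqrt{(4l+1)!}$ directly, which gives $\sim c\,l^{-1/4}\asymp l_{1}^{-1/4}$ and shows the exponent $-1/4$ is sharp.

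I would flag the uniformity in $l$ as the main subtlety. It is precisely what is needed when the statement is invoked in the proof of Lemma~\ref{lemma6j}, where the summation index $s$ ranges up to $2l$ with $l$ large, so the constant in the $O(\cdot)$ must be absolute and the near-maximal case $s\asymp 2l$ must be covered; it is also where a naive asymptotic Stirling expansion is dangerous, since the factorial $(l-k)!$ can have argument as small as $1$. Using the non-asymptotic two-sided Stirling bounds and isolating the one degenerate value $k=l$ removes this difficulty while keeping every constant independent of $l$.
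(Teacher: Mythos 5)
Your proof is correct and follows essentially the same route as the paper's: both start from the explicit closed form for $C_{l_{1}0l_{2}0}^{l_{3}0}$ with zero projections quoted in Section 3, specialize it to $C_{l0l_{1}0}^{l0}$, and extract the rate by Stirling's formula, arriving at the same intermediate quantity (your $\frac{1}{\sqrt{\pi k}}\bigl(\frac{l}{l+k}\bigr)^{1/2}\bigl(\frac{l+k}{l-k}\bigr)^{1/4}$ coincides, up to absolute constants, with the paper's $\sqrt{2l+1}\,l_{1}^{-1/2}(2l-l_{1}+1)^{-1/4}(2l+l_{1}+1)^{-1/4}$). Your additional precautions --- two-sided non-asymptotic Stirling bounds to keep the constant uniform in $l$, and the separate direct treatment of the degenerate endpoint $l_{1}=2l$ where $(l-k)!=0!$ --- are refinements of steps the paper handles informally (it silently replaces $(2l-l_{1})/2$ by $(2l-l_{1}+1)/2$ in the denominator to sidestep exactly that degeneracy), and your sharpness check at $l_{1}=2l$ is a small bonus beyond what the paper records.
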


\begin{proof}
Unless the triangle condition $2l\geq l_{1}$ is satisfied, the
Clebsch-Gordan coefficient is identically zero and the bound is trivial. Now
recall that%
\begin{equation*}
C_{l0l_{1}0}^{l0}=\frac{\sqrt{2l+1}\left[ (2l+l_{1})/2\right] !}{\left[
l_{1}/2\right] !\left[ (2l-l_{1})/2\right] !\left[ l_{1}/2\right] !}\left\{
\frac{l_{1}!(2l-l_{1})!l_{1}!}{(2l+l_{1}+1)!}\right\} ^{1/2}.
\end{equation*}%
For sequences $\left\{ a_{l}\right\} $ and $\left\{ b_{l}\right\} $, write $%
a_{l}\approx b_{l}$ when both $a_{l}=O(b_{l})$ and $b_{l}=O(a_{l})$ hold
true. From Stirling's formula%
\begin{align*}
C_{l0l_{1}0}^{l0} \!&\approx \!\frac{\sqrt{2l+1}\left[
(2l+l_{1})/2\right]
^{(2l+l_{1})/2+1/2}}{\left[ l_{1}/2\right] ^{l_{1}+1}\left[ (2l-l_{1}+1)/2%
\right] ^{(2l-l_{1})/2+1/2}}\!\left\{ \frac{%
l_{1}^{2l_{1}+1}(2l-l_{1})^{(2l-l_{1})+1/2}}{(2l+l_{1}+1)^{2l+l_{1}+3/2}}%
\right\} ^{1/2} \\
&=\!\frac{\sqrt{2l+1}(2l+l_{1})^{(2l+l_{1})/2+1/2}}{%
l_{1}^{l_{1}+1}(2l-l_{1}+1)^{(2l-l_{1})/2+1/2}}\left\{ \frac{%
l_{1}^{2l_{1}+1}(2l-l_{1})^{(2l-l_{1})+1/2}}{(2l+l_{1}+1)^{2l+l_{1}+3/2}}%
\right\} ^{1/2} \\
&=\!\!\frac{\sqrt{2l+1}}{l_{1}^{1/2}(2l\!-l_{1}\!+\!1)^{1/4}}\frac{1}{%
(2l\!+\!l_{1}\!+\!1)^{1/4}}\!\leq\! \frac{\sqrt[4]{2l+1}}{l_{1}^{1/2}(2l\!-\!l_{1}\!+\!1)^{1/4}}%
\!=\!O(\frac{1}{\sqrt[4]{l_{1}}})
\end{align*}
\end{proof}

\noindent We can finally state a sufficient condition for the CLT
(\ref{as00}) in the case $q=2.$

\begin{proposition}
\label{lemmaq2}\ For $q=2$, a sufficient condition for the CLT (\ref{as00})
is the following asymptotic relation%
\begin{equation}
\lim_{l\rightarrow +\infty }\sup_{l_{1}}\frac{\sum_{l_1}\Gamma
_{l_{1}}\Gamma _{l_{2}}\left\{ C_{l_{1}0l_{2}0}^{l0}\right\} ^{2}}{%
\sum_{l_{1},l_{2}}\Gamma _{l_{1}}\Gamma _{l_{2}}(C_{l_{1}0l_{2}0}^{l0})^{2}}%
\!=\!\lim_{l\rightarrow +\infty }\sup_{l_{1}}\mathbb{P}\left\{
Z_{1}\!=\!l_{1}\!\mid \! Z_{2} \! = \!l_{2}\right\} \!=0\text{,}
\label{sufcon2}
\end{equation}%
where the $\left\{ \Gamma _{l}\right\} $ are given by (\ref{FreqSpectrum})
and $\left\{ Z_{l}\right\} $ is the Markov chain defined in formulae (\ref%
{rw1}) and (\ref{rw2}).
\end{proposition}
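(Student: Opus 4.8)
The plan is to verify the single contraction condition that, by Part 3 of Theorem \ref{teo1} (equivalently, Corollary \ref{C : PunctualCLT}), is equivalent to the CLT (\ref{as00}) when $q=2$: namely that (\ref{as11}) holds for $m=0$ and $p=1$. Write $N_l$ for the multiple sum in the numerator of (\ref{as11}) taken at $q=2,p=1,m=0$; the goal is $(\widetilde{C}^{(2)}_l)^{-2}N_l\to 0$. Two ingredients are already in place. From (\ref{VAR2}) specialized to $q=2$ (where the sums over $L_1,\dots,L_{q-2}$ are empty), together with (\ref{cgconv-2}), one has $\widetilde{C}^{(2)}_l=\frac{2}{4\pi(2l+1)}\,\widehat{\Gamma}_{2,l}$, so that $(\widetilde{C}^{(2)}_l)^{-2}=[4\pi(2l+1)]^{2}/(4\widehat{\Gamma}_{2,l}^{2})$. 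On the other side, by the last display of (\ref{num2}), $N_l$ carries the prefactor $[4\pi(2l+1)]^{-2}$ times a product of four Clebsch--Gordan coefficients and the inner $6j$-sum $\mathcal{S}$. Invoking the $6j$ reduction derived just before Lemma \ref{lemma6j} (with $m=0$, so that the weight is exactly $(C^{l0}_{l0s0})^{2}$) and then Lemma \ref{lemma6j}, I bound $|\mathcal{S}|\le c\,M$ with $M=\max[A_l,A_j]$, $A_l=(2l_1+1)^{-1/5}\wedge(2l_2+1)^{-1/5}$ and $A_j=(2j_1+1)^{-1/5}\wedge(2j_2+1)^{-1/5}$. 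The two powers of $4\pi(2l+1)$ then cancel, and it remains to show
\[
\frac{1}{\widehat{\Gamma}_{2,l}^{2}}\sum_{l_{1},l_{2},j_{1},j_{2}}\Gamma_{l_{1}}\Gamma_{l_{2}}\Gamma_{j_{1}}\Gamma_{j_{2}}\,\bigl|C^{l0}_{l_{1}0j_{1}0}C^{l0}_{l_{1}0j_{2}0}C^{l0}_{l_{2}0j_{1}0}C^{l0}_{l_{2}0j_{2}0}\bigr|\,M\longrightarrow 0 .
\]

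The core step is estimating this normalized sum. Since the summand is invariant under the exchange $\{l_1,l_2\}\leftrightarrow\{j_1,j_2\}$ and $M\le A_l+A_j$, I would reduce at the cost of a factor $2$ to the single weight $A_l$, which does not involve the $j$'s. I then apply $|abcd|\le\tfrac12(a^2d^2+b^2c^2)$ to the four coefficients and sum out $j_1,j_2$ using the defining relation (\ref{cgconv1}), $\sum_{j}\Gamma_{j}(C^{l0}_{l'0j0})^{2}=\widehat{\Gamma}^{\ast}_{2,l;l'}$; both resulting terms give the same contribution, so
\[
\frac{1}{\widehat{\Gamma}_{2,l}^{2}}\sum_{l_{1},l_{2},j_{1},j_{2}}\Gamma_{l_{1}}\Gamma_{l_{2}}\Gamma_{j_{1}}\Gamma_{j_{2}}\,\bigl|C^{l0}_{l_{1}0j_{1}0}C^{l0}_{l_{1}0j_{2}0}C^{l0}_{l_{2}0j_{1}0}C^{l0}_{l_{2}0j_{2}0}\bigr|\,A_l\le \frac{c}{\widehat{\Gamma}_{2,l}^{2}}\sum_{l_1,l_2}\Gamma_{l_1}\Gamma_{l_2}\,\widehat{\Gamma}^{\ast}_{2,l;l_1}\widehat{\Gamma}^{\ast}_{2,l;l_2}\,A_l .
\]
Using $\Gamma_{l_1}\widehat{\Gamma}^{\ast}_{2,l;l_1}/\widehat{\Gamma}_{2,l}=\mathbb{P}\{Z_1=l_1\mid Z_2=l\}=:p_{l_1}$ — which is precisely the middle identity of (\ref{sufcon2}) (alternatively (\ref{impint}) with $q=2,p=1$) — the right-hand side becomes $c\sum_{l_1,l_2}p_{l_1}p_{l_2}A_l$.

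Finally I deduce the vanishing of $\sum_{l_1,l_2}p_{l_1}p_{l_2}A_l$ from hypothesis (\ref{sufcon2}). Since $A_l$ is a minimum, it is dominated by the geometric mean, $A_l\le(2l_1+1)^{-1/10}(2l_2+1)^{-1/10}$, so $\sum_{l_1,l_2}p_{l_1}p_{l_2}A_l\le\bigl(\sum_{l_1}p_{l_1}(2l_1+1)^{-1/10}\bigr)^{2}$, and for any level $J$,
\[
\sum_{l_1}p_{l_1}(2l_1+1)^{-1/10}\le (J+1)\sup_{l_1}p_{l_1}+(2J+1)^{-1/10}.
\]
Letting $l\to+\infty$ kills the first summand by (\ref{sufcon2}), and then $J\to+\infty$ kills the second; hence $(\widetilde{C}^{(2)}_l)^{-2}N_l\to 0$, which is the asserted sufficient condition.

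The step demanding the most care is the bookkeeping of the normalization. The four Clebsch--Gordan factors in $N_l$ are genuinely coupled (they do not occur as squares, unlike in $\widehat{\Gamma}_{2,l}$), and they must be reorganized — via symmetry, the arithmetic--geometric mean inequality, and the convolution identity (\ref{cgconv1}) — so that summing out the $j$'s recovers the denominator $\widehat{\Gamma}_{2,l}^{2}$ \emph{exactly} and no residual power of $(2l+1)$ survives. It is equally essential that (\ref{sufcon2}) controls the conditional probabilities \emph{uniformly} in $l_1$: pointwise decay would not close the truncation, since the fractional weight only suppresses the regime where the indices are large, while the first (bounded-index) block is controlled solely by the supremum.
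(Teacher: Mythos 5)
Your proposal is correct and follows essentially the same route as the paper's own proof: reduction via Part 3 of Theorem \ref{teo1} to condition (\ref{as11}) with $q=2$, $p=1$, $m=0$, the $6j$-reduction together with Lemma \ref{lemma6j}, symmetrization of the resulting weight, a decoupling of the four Clebsch--Gordan factors so that summing out the $j$-indices via (\ref{cgconv1}) reproduces $\widehat{\Gamma}^{\ast}_{2,l;l_1}\widehat{\Gamma}^{\ast}_{2,l;l_2}$, and a truncation argument closed by the uniformity in $l_{1}$ of hypothesis (\ref{sufcon2}). The only differences are cosmetic: you decouple with $|abcd|\le\tfrac{1}{2}(a^{2}d^{2}+b^{2}c^{2})$ and a geometric-mean bound on the minimum where the paper uses a generalized Cauchy--Schwarz inequality, and you truncate at the level of the conditional probabilities $p_{l_{1}}$ rather than of the weighted Clebsch--Gordan sums.
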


\begin{proof}
In the sequel, we shall use repeatedly the trivial inequality
\begin{equation}
\sum_{j=0}^{n}\left| \frac{c_{j}}{a_{j}\wedge b_{j}}\right|
=\sum_{j:a_{j}\leq b_{j}}\left| \frac{c_{j}}{a_{j}}\right|
+\sum_{j:a_{j}>b_{j}}\left| \frac{c_{j}}{b_{j}}\right| \leq
\sum_{j=0}^{n}\left| \frac{c_{j}}{a_{j}}\right| +\sum_{j=0}^{n}\left| \frac{%
c_{j}}{b_{j}}\right| \text{,}  \label{trivine}
\end{equation}%
which holds for arbitrary $n$ and real vectors $\left\{ a_{j}\right\} $, $%
\left\{ b_{j}\right\} $ and $\left\{ c_{j}\right\} $. In view of Lemma \ref%
{lemma6j}, by using a generalized Cauchy-Schwartz inequality, (\ref{trivine}%
) and symmetry considerations, we obtain that the expression
(\ref{num2}) is such that
\begin{eqnarray*}
(\ref{num2}) &\leq &\frac{1}{[4\pi (2l+1)]^{2}}\sum_{j_{1},j_{2}}%
\sum_{l_{1},l_{2}}\Gamma _{j_{1}}\Gamma _{j_{2}}\Gamma _{l_{1}}\Gamma
_{l_{2}}\left|
C_{l_{1}0j_{1}0}^{l0}C_{l_{1}0j_{2}0}^{l0}C_{l_{2}0j_{1}0}^{l0}C_{l_{2}0j_{2}0}^{l0}\right|
\\
&\leq &\frac{2}{[4\pi
(2l+1)]^{2}}\!\sum_{j_{1},j_{2}}\!\sum_{l_{1},l_{2}}\Gamma
_{j_{1}}\Gamma _{j_{2}}\Gamma _{l_{1}}\Gamma _{l_{2}}\!\left|
C_{l_{1}0j_{1}0}^{l0}C_{l_{1}0j_{2}0}^{l0}C_{l_{2}0j_{1}0}^{l0}C_{l_{2}0j_{2}0}^{l0}\right|
\frac{1}{\sqrt[5]{j_{1}}}\\
&\leq &\frac{1}{8[\pi
(2l+1)]^{2}}\sqrt{\sum_{l_{1}j_{1}}\frac{\Gamma _{l_{1}}\Gamma
_{j_{1}}}{\sqrt[5]{j_{1}^{2}}}\left\{
C_{l_{1}0j_{1}0}^{l0}\right\} ^{2}\sum_{l_{1}j_{2}}\Gamma
_{l_{1}}\Gamma
_{j_{2}}\left\{ C_{l_{1}0j_{2}0}^{l0}\right\} ^{2}} \\
&&\times \sqrt{\sum_{l_{2}j_{1}}\Gamma _{l_{2}}\Gamma
_{j_{1}}\left\{ C_{l_{2}0j_{1}0}^{l0}\right\}
^{2}\sum_{l_{2}j_{2}}\Gamma _{l_{2}}\Gamma _{j_{2}}\left\{
C_{l_{2}0j_{2}0}^{l0}\right\} ^{2}}.
\end{eqnarray*}%
The last expression is less than
\begin{equation}%
\frac{ \sum_{l_{1}l_{2}}\Gamma _{l_{1}}\Gamma _{l_{2}}\left\{
C_{l_{1}0l_{2}0}^{l0}\right\} ^{2}}{8[\pi (2l+1)]^{2}}
\sqrt{\sum_{l_{1}j_{1}}\frac{\Gamma _{l_{1}}\Gamma
_{j_{1}}}{\sqrt[5]{j_{1}^{2}}}\left\{
C_{l_{1}0j_{1}0}^{l0}\right\} ^{2}\sum_{l_{1}j_{2}}\Gamma
_{l_{1}}\Gamma _{j_{2}}\left\{ C_{l_{1}0j_{2}0}^{l0}\right\}
^{2}}. \label{urca}
\end{equation}%
Now
\begin{eqnarray*}
\sum_{l_{1}j_{1}}\frac{\Gamma _{l_{1}}\Gamma _{j_{1}}}{\sqrt[5]{j_{1}^{2}}}%
\left\{ C_{l_{1}0j_{1}0}^{l0}\right\} ^{2} &\leq &j^{\ast }\max_{j_{1}\leq
j^{\ast }}\left[ \sum_{l_{1}\geq 0}\Gamma _{l_{1}}\Gamma _{j_{1}}\left\{
C_{l_{1}0j_{1}0}^{l0}\right\} ^{2}\right] \\
&&+\frac{1}{\sqrt[5]{(j^{\ast })^{2}}}\sum_{l_{1},j_{1}\geq 0}\Gamma
_{l_{1}}\Gamma _{j_{1}}\left\{ C_{l_{1}0j_{1}0}^{l0}\right\} ^{2}.
\end{eqnarray*}%
It follows that%
\begin{eqnarray*}
\frac{(\ref{num2})}{(\ref{den2})} &\leq
&\frac{(\ref{urca})}{\left\{ \sum_{l_{1},l_{2}=0}^{\infty }\Gamma
_{l_{1}}\Gamma
_{l_{2}}(C_{l_{1}0l_{2}0}^{l0})^{2}\right\} ^{2}}\!=\!
\sqrt{\frac{\sum_{l_{1}j_{1}}\frac{\Gamma _{l_{1}}\Gamma _{j_{1}}}{\sqrt[5%
]{j_{1}^{2}}}\left\{ C_{l_{1}0j_{1}0}^{l0}\right\} ^{2}}{%
\sum_{l_{1},l_{2}=0}^{\infty }\Gamma _{l_{1}}\Gamma
_{l_{2}}(C_{l_{1}0l_{2}0}^{l0})^{2}}} \\
&\leq& 2\sqrt{\frac{j^{\ast }\max_{j_{1}\leq j^{\ast }}\left[
\sum_{l_{1}\geq 1}\Gamma _{l_{1}}\Gamma
_{j_{1}}\left\{ C_{l_{1}0j_{1}0}^{l0}\right\} ^{2}\right] }{%
\sum_{l_{1},l_{2}=0}^{\infty }\Gamma _{l_{1}}\Gamma
_{l_{2}}(C_{l_{1}0l_{2}0}^{l0})^{2}}+\frac{1}{\sqrt[5]{(j^{\ast })^{2}}}}%
\text{ .}
\end{eqnarray*}%
Now fix $\varepsilon >0$. Under (\ref{sufcon2}) we have that, for
any fixed and positive number $l_{1}^{\ast }>1/\varepsilon $,
\begin{eqnarray*}
&&\lim_{l\rightarrow \infty }\left[ \frac{j^{\ast }\max_{j_{1}\leq j^{\ast }}%
\left[ \sum_{l_{1}\geq 1}\Gamma _{l_{1}}\Gamma _{j_{1}}\left\{
C_{l_{1}0j_{1}0}^{l0}\right\} ^{2}\right] }{\sum_{l_{1},l_{2}=1}^{\infty
}\Gamma _{l_{1}}\Gamma _{l_{2}}(C_{l_{1}0l_{2}0}^{l0})^{2}}+\frac{1}{\sqrt[5]%
{(j^{\ast })^{2}}}\right] \\
&\leq &j^{\ast }\lim_{l\rightarrow \infty }\sup_{l_{1}}\frac{%
\sum_{l_{2}=1}^{\infty }\Gamma _{l_{1}}\Gamma _{l_{2}}\left\{
C_{l_{1}0l_{2}0}^{l0}\right\} ^{2}}{\sum_{l_{1},l_{2}=1}^{\infty }\Gamma
_{l_{1}}\Gamma _{l_{2}}(C_{l_{1}0l_{2}0}^{l0})^{2}}+\sqrt[5]{\varepsilon ^{2}%
}=\sqrt[5]{\varepsilon ^{2}}\text{ .}
\end{eqnarray*}%
Because $\varepsilon $ is arbitrary, the proof is concluded.
\end{proof}

\textbf{Remark. }Note that, using (\ref{cgconv}) and (\ref{cgconv1}),
condition (\ref{sufcon2}) becomes%
\begin{equation}
\lim_{l\rightarrow \infty }\sup_{\lambda }\frac{\Gamma _{\lambda }\widehat{%
\Gamma }_{2,l;\lambda }^{\ast }}{\sum_{l_{1}}\Gamma _{l_{1}}\widehat{\Gamma }%
_{2,l;l_{1}}}=0\text{ .}  \label{rwhq21}
\end{equation}%
Note also that if, in the convolutions (\ref{cgconv}), one replaces each
squared Clebsch-Gordan coefficient $\left( C_{l_{1}0l_{2}0}^{l0}\right) ^{2}$
by the indicator $\mathbf{1}_{l_{1}+l_{2}=l}$ and extends the sums over $%
\mathbb{Z}$, one obtains the relation
\begin{equation}
\lim_{l\rightarrow \infty }\sup_{l_{1}}\frac{\Gamma _{l_{1}}\Gamma _{l-l_{1}}%
}{\sum_{l_{1}}\Gamma _{l_{1}}\Gamma _{l-l_{1}}}=0\text{.}  \label{rwhq2}
\end{equation}%
In particular, when $\left\{ \Gamma _{l}\right\} =\{ \Gamma
_{l}^{V}\} $ (the power spectrum of the field $V$ on $\mathbb{T}$
given in (\ref{clll})) it is not difficult to show that formula (\ref{rwhq2}%
) gives exactly the asymptotic (necessary and sufficient) condition (\ref%
{rwAbel}).

\subsection{The case $q=3$\label{SS : Q3}}

Our results for $q=3$ closely mirrors the conditions we derived in the
previous subsection.

\begin{proposition}
\label{lemmaq3} A sufficient condition for the CLT (\ref{as00}) when $q=3$
is
\begin{eqnarray}
\lim_{l\rightarrow \infty }\sup_{L_{1}}\frac{\sum_{l_{1}l_{2}j_{1}}\Gamma
_{l_{1}}\Gamma _{l_{2}}\Gamma _{j_{1}}\left\{
C_{l_{1}0l_{2}0j_{1}0}^{L_{1}l0}\right\} ^{2}}{\sum_{L_{1}}%
\sum_{l_{1},l_{2},l_{3}}\Gamma _{l_{1}}\Gamma _{l_{2}}\Gamma
_{l_{3}}\left\{ C_{l_{1}0l_{2}0l_{3}0}^{L_{1}l0}\right\} ^{2}}
&=&0,\text{
and}  \label{condq31} \\
\lim_{l\rightarrow \infty }\sup_{j_{1}}\frac{\sum_{l_{1}l_{2}L_{1}}\Gamma
_{l_{1}}\Gamma _{l_{2}}\Gamma _{j_{1}}\left\{
C_{l_{1}0l_{2}0j_{1}0}^{L_{1}l0}\right\} ^{2}}{\sum_{L_{1}}%
\sum_{l_{1},l_{2},l_{3}}\Gamma _{l_{1}}\Gamma _{l_{2}}\Gamma
_{l_{3}}\left\{ C_{l_{1}0l_{2}0l_{3}0}^{L_{1}l0}\right\} ^{2}}
&=&0\text{ .} \label{condq32}
\end{eqnarray}
\end{proposition}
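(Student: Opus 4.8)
The plan is to follow verbatim the scheme of the proof of Proposition \ref{lemmaq2}, now applied to $q=3$. By Part 3 of Theorem \ref{teo1}, the CLT (\ref{as00}) holds as soon as the contraction condition (\ref{as11}) is verified for $m=0$; and since for $q=3$ one has $q-1=2$ (even), the admissible range $p=\tfrac{q-1}{2}+1,\dots,q-1$ collapses to the single value $p=2$. Hence the whole statement reduces to proving that, under (\ref{condq31})--(\ref{condq32}), the single quantity $(\widetilde{C}_{l}^{(3)})^{-2}\,\Vert h_{l,0}^{(3)}\otimes_{2}\overline{h_{l,0}^{(3)}}\Vert^{2}$ tends to zero as $l\to+\infty$.

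First I would expand the contraction $h_{l,0}^{(3)}\otimes_{2}\overline{h_{l,0}^{(3)}}$ exactly as in the proof of Theorem \ref{teo1}: the two contracted legs carry indices $(l_{1},m_{1}),(l_{2},m_{2})$ with weights $C_{l_{1}}C_{l_{2}}$, while the two copies retain one free leg each, say $(j_{1},n_{1})$ and $(j_{2},n_{2})$. Taking the $L^{2}$-norm by means of the orthogonality of the $g_{jn}$'s produces a multiple sum, weighted by $\Gamma_{j_{1}}\Gamma_{j_{2}}$, of the modulus squared of $\sum_{l_{1}l_{2}}\Gamma_{l_{1}}\Gamma_{l_{2}}\,\mathcal{G}\{l_{1},m_{1};l_{2},m_{2};j_{1},n_{1};l,0\}\,\mathcal{G}\{l_{1},m_{1};l_{2},m_{2};j_{2},n_{2};l,0\}$, in perfect analogy with (\ref{num2}). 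Replacing each generalized Gaunt integral by its Clebsch-Gordan expansion (\ref{megagaunt})--(\ref{gagaunt}) --- which for $q=3$ introduces a single intermediate momentum $L_{1}$ together with the zero-projection factors $C_{l_{1}0l_{2}0}^{L_{1}0}C_{L_{1}0j_{1}0}^{l0}$ --- turns the numerator into a sum, over the magnetic numbers, of products of four Clebsch-Gordan coefficients, weighted by the invariant quantities $C_{l_{1}0l_{2}0j_{1}0}^{L_{1}l0}$ that enter (\ref{condq31})--(\ref{condq32}).

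Next I would sum out the magnetic quantum numbers. As in the passage leading to (\ref{pau1})--(\ref{pau2}), the product of Clebsch-Gordan coefficients collapses into a combination of Wigner $6j$ symbols (here organised around the intermediate momenta), to which Lemma \ref{lemma6j} applies and yields a decaying factor of the form $(2\,(\cdot)+1)^{-1/5}$; the crucial new feature, compared with the quadratic case, is that the presence of $L_{1}$ allows two distinct indices to realise this decay, namely the intermediate momentum $L_{1}$ and the free leg $j_{1}$. Then, invoking the elementary inequality (\ref{trivine}) to split the resulting minimum and applying a generalised Cauchy-Schwarz inequality together with the symmetry of the Clebsch-Gordan coefficients --- exactly as in the step producing (\ref{urca}) --- I would bound the normalised contraction by a sum of two ratios. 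After dividing by the denominator $\widehat{\Gamma}_{3,l}$ (recall (\ref{VAR2}) and (\ref{cgconv})), the first ratio is controlled by (\ref{condq31}) and the second by (\ref{condq32}); the same threshold/$\varepsilon$-argument used at the end of the proof of Proposition \ref{lemmaq2} (fixing a cut-off on $L_{1}$, respectively on $j_{1}$, and letting it diverge) then forces each ratio to zero. The main obstacle is precisely the magnetic-number summation of this third step: the extra intermediate coupling makes it considerably heavier than for $q=2$, and the delicate point is to carry out the reduction while keeping track of which leg carries the $6j$-decay, so that the final estimate separates cleanly into the two independent conditions (\ref{condq31}) and (\ref{condq32}).
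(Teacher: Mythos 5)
Your proposal is correct and follows essentially the same route as the paper's proof: reduction to the single case $p=2$ via Part 3 of Theorem \ref{teo1}, expansion of the contraction norm into products of Gaunt integrals and hence Clebsch-Gordan coefficients with intermediate momenta, collapse of the magnetic sums into Wigner $6j$ symbols handled by Lemma \ref{lemma6j} (with the decay carried either by $L_{1}$ or by $j_{1}$, matching (\ref{condq31}) and (\ref{condq32}) respectively), and the Cauchy-Schwarz plus $\varepsilon$-threshold argument borrowed from Proposition \ref{lemmaq2}. The paper's proof executes exactly this scheme, including the orthogonality reduction (\ref{fff}) inside the squared modulus that your "summing out the magnetic numbers" step implicitly contains.
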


\textbf{Remark. }In the light of (\ref{cgconv})-(\ref{cgconv1}) and of the
definition of the random walk $Z$ given in (\ref{rw1}) and (\ref{rw2}), it
is not difficult to see that (\ref{condq31}) can be rewritten as%
\begin{align}
\lim_{l\rightarrow \infty }\sup_{\lambda }\frac{\widehat{\Gamma }_{2,\lambda
}\sum_{j_{1}}\Gamma _{j_{1}}\left\{ C_{\lambda j_{1}0}^{l0}\right\} ^{2}}{%
\widehat{\Gamma }_{3,l}}& =\!\lim_{l\rightarrow \infty }\sup_{\lambda }\frac{%
\widehat{\Gamma }_{2,\lambda }\widehat{\Gamma }_{2,l;\lambda }^{\ast }}{%
\sum_{L_{1}}\left[ \widehat{\Gamma }_{2,L_{1}}\widehat{\Gamma }%
_{1,l;L_{1}}^{\ast }\right] }  \label{convq31} \\
& =\!\lim_{l\rightarrow \infty }\sup_{\lambda }\mathbb{P}\left[
Z_{2}=\lambda \mid Z_{3}=l\right] =0\text{.}  \notag
\end{align}%
Likewise, one obtains that (\ref{condq32}) is equivalent to
\begin{eqnarray}
&&\lim_{l\rightarrow \infty }\sup_{j_{1}}\frac{\Gamma _{j_{1}}\widehat{%
\Gamma }_{3,l;j_{1}}^{\ast }}{\sum_{L_{1}}\sum_{l_{1},l_{2},l_{3}}\Gamma
_{l_{1}}\Gamma _{l_{2}}\Gamma _{l_{3}}\left\{
C_{l_{1}0l_{2}0l_{3}0}^{L_{1}l0}\right\} ^{2}}  \label{convq32a} \\
&=&\lim_{l\rightarrow \infty }\sup_{j_{1}}\mathbb{P}\left[ Z_{1}=j_{1}\mid
Z_{3}=l\right] =0\text{ .}  \notag
\end{eqnarray}%
It should be noted that the two conditions (\ref{convq31}) and (\ref%
{convq32a}) can be written compactly as
\begin{equation}
\lim_{l\rightarrow \infty }\max_{q=1,2}\sup_{j_{1}}\frac{\widehat{\Gamma }%
_{q,j_{1}}\widehat{\Gamma }_{3-q,l;j_{1}}^{\ast }}{\sum_{L_{1}}%
\sum_{l_{1},l_{2},l_{3}}\Gamma _{l_{1}}\Gamma _{l_{2}}\Gamma _{l_{3}}\left\{
C_{l_{1}0l_{2}0l_{3}0}^{L_{1}l0}\right\} ^{2}}=0\text{ .}  \label{rwhq3}
\end{equation}%
Relation (\ref{rwhq3}) parallels once again analogous conditions
established for stationary fields on a torus -- see \cite{MaPe}.

\smallskip

\textbf{Proof of Proposition \ref{lemmaq3}. }In view of Part 3 of Theorem %
\ref{teo1}, we shall focus on the asymptotic negligeability of the ratio
appearing in (\ref{as11}), in the case where $q=3$ and $p=2.$ As before, the
denominator of (\ref{as11}) is proportional to%
\begin{eqnarray}
&&\left\{ \sum_{l_{1},l_{2},l_{3}}C_{l_{1}}C_{l_{2}}C_{l_{3}}\frac{1}{2l+1}%
\left\{ \prod_{i=1}^{3}(2l_{i}+1)\right\} \sum_{L_{1}}\left\{
C_{l_{1}0l_{2}0}^{L_{1}0}C_{L_{1}0l_{3}0}^{l0}\right\} ^{2}\right\} ^{2}
\label{den3} \\
&=&\frac{1}{(2l+1)^{2}}\left\{ \sum_{l_{1},l_{2},l_{3}}^{\infty }\Gamma
_{l_{1}}\Gamma _{l_{2}}\Gamma _{l_{3}}\sum_{L_{1}}\left\{
C_{l_{1}0l_{2}0l_{3}0}^{L_{1}l0}\right\} ^{2}\right\} ^{2}.  \notag
\end{eqnarray}%
On the other hand, the numerator is proportional to%
\begin{eqnarray}
&&\frac{1}{(2l+1)^{2}}\sum_{j_{1},j_{2}}\sum_{n_{1},n_{2}}\Gamma
_{j_{1}}\Gamma _{j_{2}}\times  \notag \\
&&\left\vert \sum_{l_{1},l_{2},m_{1},m_{2}}\Gamma _{l_{1}}\Gamma
_{l_{2}}%
\sum_{L_{1}}C_{l_{1}0l_{2}0j_{1}0}^{L_{1}l0}C_{l_{1}m_{1}l_{2}m_{2}j_{1}n_{1}}^{L_{1}lm}\sum_{L_{2}}C_{l_{1}0l_{2}0j_{2}0}^{L_{2}l0}C_{l_{1}m_{1}l_{2}m_{2}j_{2}n_{2}}^{L_{2}lm}\right\vert ^{2}
\notag \\
&=&\frac{1}{(2l+1)^{2}}\sum_{j_{1},j_{2}}\sum_{n_{1},n_{2}}\Gamma
_{j_{1}}\Gamma _{j_{2}}\times  \notag \\
&&\left\vert \sum_{l_{1},l_{2},m_{1},m_{2}}\Gamma _{l_{1}}\Gamma
_{l_{2}}\sum_{L_{1}}C_{l_{1}0l_{2}0j_{1}0}^{L_{1}l0}%
\sum_{M_{1}}C_{l_{1}m_{1}l_{2}m_{2}}^{L_{1}M_{1}}C_{L_{1}M_{1}j_{1}n_{1}}^{lm}\right.
\notag \\
&&\text{ \ \ \ \ \ \ \ \ \ \ \ \ \ \ \ \ \ \ \ \ \ \ \ \ \ \ \ \ \ \ \ \ \ \
\ \ }\left.
\sum_{L_{2}}C_{l_{1}0l_{2}0j_{2}0}^{L_{2}l0}%
\sum_{M_{2}}C_{l_{1}m_{1}l_{2}m_{2}}^{L_{2}M_{2}}C_{L_{2}M_{2}j_{2}n_{2}}^{lm}\right\vert ^{2}
\label{num3}
\end{eqnarray}%
This last expression equals in turn%
\begin{eqnarray}
&=&\frac{1}{(2l+1)^{2}}\sum_{j_{1},j_{2}}\sum_{n_{1},n_{2}}\Gamma
_{j_{1}}\Gamma _{j_{2}}\times  \notag \\
&&\left\vert \sum_{l_{1},l_{2}}\Gamma _{l_{1}}\Gamma
_{l_{2}}\sum_{L_{1}}C_{l_{1}0l_{2}0j_{1}0}^{L_{1}l0}%
\sum_{M_{1}}C_{L_{1}M_{1}j_{1}n_{1}}^{lm}%
\sum_{L_{2}}C_{l_{1}0l_{2}0j_{2}0}^{L_{2}l0}%
\sum_{M_{2}}C_{L_{2}M_{2}j_{2}n_{2}}^{lm}\delta _{L_{1}}^{L_{2}}\delta
_{M_{1}}^{M_{2}}\right\vert ^{2}  \notag \\
&=&\frac{1}{(2l+1)^{2}}\sum_{j_{1},j_{2}}\sum_{n_{1},n_{2}{}_{1}}\Gamma
_{j_{1}}\Gamma _{j_{2}}\times  \notag \\
&&\left\vert \sum_{l_{1},l_{2}=0}\Gamma _{l_{1}}\Gamma
_{l_{2}}%
\sum_{L_{1}}C_{l_{1}0l_{2}0j_{1}0}^{L_{1}l0}C_{l_{1}0l_{2}0j_{2}0}^{L_{1}l0}%
\sum_{M_{1}}C_{L_{1}M_{1}j_{1}n_{1}}^{lm}C_{L_{1}M_{1}j_{2}n_{2}}^{lm}\right%
\vert ^{2}  \notag
\end{eqnarray}%
\begin{eqnarray*}
&=&\frac{1}{(2l+1)^{2}}\sum_{j_{1},j_{2}}\sum_{n_{1},n_{2}{}_{1}}\Gamma
_{j_{1}}\Gamma _{j_{2}}\times \\
&&\left\vert \sum_{l_{1},l_{2}}\Gamma _{l_{1}}\Gamma
_{l_{2}}\sum_{L_{1}}\!C_{l_{1}0l_{2}0j_{1}0}^{L_{1}l0}\!\sum_{M_{1}}%
\!C_{L_{1}M_{1}j_{1}n_{1}}^{lm}\!\sum_{L_{2}}%
\!C_{l_{1}0l_{2}0j_{2}0}^{L_{2}l0}\!\sum_{M_{2}}%
\!C_{L_{2}M_{2}j_{2}n_{2}}^{lm}\delta _{L_{1}}^{L_{2}}\delta
_{M_{1}}^{M_{2}}\!\right\vert ^{2}
\end{eqnarray*}%
and we can use the same argument as for $q=2.$ More precisely, one can write%
\begin{eqnarray}
&&\left\vert \sum_{l_{1},l_{2}=1}\Gamma _{l_{1}}\Gamma
_{l_{2}}%
\sum_{L_{1}}C_{l_{1}0l_{2}0j_{1}0}^{L_{1}l0}C_{l_{1}0l_{2}0j_{2}0}^{L_{1}l0}%
\sum_{M_{1}}C_{L_{1}M_{1}j_{1}n_{1}}^{lm}C_{L_{1}M_{1}j_{2}n_{2}}^{lm}\right%
\vert ^{2}  \notag \\
&=&\sum_{l_{1}...l_{4}}\Gamma _{l_{1}}...\Gamma
_{l_{4}}%
\sum_{L_{1}L_{2}}C_{l_{1}0l_{2}0j_{1}0}^{L_{1}l0}C_{l_{1}0l_{2}0j_{2}0}^{L_{1}l0}C_{l_{3}0l_{4}0j_{1}0}^{L_{2}l0}C_{l_{3}0l_{4}0j_{2}0}^{L_{2}l0}
\notag \\
&&%
\sum_{M_{1}M_{2}}C_{L_{1}M_{1}j_{1}n_{1}}^{lm}C_{L_{1}M_{1}j_{2}n_{2}}^{lm}C_{L_{2}M_{2}j_{1}n_{1}}^{lm}C_{L_{2}M_{2}j_{2}n_{2}}^{lm}
\notag \\
&=&\sum_{l_{1}...l_{4}}\Gamma _{l_{1}}...\Gamma
_{l_{4}}%
\sum_{L_{1}L_{2}}C_{l_{1}0l_{2}0j_{1}0}^{L_{1}l0}C_{l_{1}0l_{2}0j_{2}0}^{L_{1}l0}C_{l_{3}0l_{4}0j_{1}0}^{L_{2}l0}C_{l_{3}0l_{4}0j_{2}0}^{L_{2}l0}
\notag \\
&&(-1)^{\zeta }\sum_{s\sigma }(2s\!+\!1)(2l\!+\!1)(C_{l0s\sigma
}^{l0})^{2}\!\left\{
\begin{array}{ccc}
L_{1} & j_{1} & l \\
l & s & L_{2}%
\end{array}%
\right\} \!\left\{
\begin{array}{ccc}
L_{1} & j_{2} & l \\
l & s & L_{2}%
\end{array}%
\right\}  \label{so}
\end{eqnarray}%
where $\zeta =L_{1}+j_{1}+L_{2}+j_{2}$, and (\ref{so}) equals%
\begin{eqnarray*}
&=&\sum_{l_{1}...l_{4}}\Gamma _{l_{1}}...\Gamma
_{l_{4}}%
\sum_{L_{1}L_{2}}C_{l_{1}0l_{2}0j_{1}0}^{L_{1}l0}C_{l_{1}0l_{2}0j_{2}0}^{L_{1}l0}C_{l_{3}0l_{4}0j_{1}0}^{L_{2}l0}C_{l_{3}0l_{4}0j_{2}0}^{L_{2}l0}
\\
&&(-1)^{2l}\!\sum_{s}\!(2s\!+\!1)(2l\!+\!1)(C_{lms0}^{lm})^{2}\!\left\{
\begin{array}{ccc}
L_{1} & j_{1} & l \\
l & s & L_{2}%
\end{array}%
\right\} \!\left\{
\begin{array}{ccc}
L_{1} & j_{2} & l \\
l & s & L_{2}%
\end{array}%
\right\} .
\end{eqnarray*}%
From (\ref{lemma6j}) we now obtain that the last expression is bounded by%
\begin{eqnarray*}
&&\sum_{l_{1}...l_{4}}\Gamma _{l_{1}}...\Gamma
_{l_{4}}%
\sum_{L_{1}L_{2}}C_{l_{1}0l_{2}0j_{1}0}^{L_{1}l0}C_{l_{1}0l_{2}0j_{2}0}^{L_{1}l0}C_{l_{3}0l_{4}0j_{1}0}^{L_{2}l0}C_{l_{3}0l_{4}0j_{2}0}^{L_{2}l0}%
\frac{1}{\sqrt[5]{L_{1}}} \\
&&+\sum_{l_{1}...l_{4}}\Gamma _{l_{1}}...\Gamma
_{l_{4}}%
\sum_{L_{1}L_{2}}C_{l_{1}0l_{2}0j_{1}0}^{L_{1}l0}C_{l_{1}0l_{2}0j_{2}0}^{L_{1}l0}C_{l_{3}0l_{4}0j_{1}0}^{L_{2}l0}C_{l_{3}0l_{4}0j_{2}0}^{L_{2}l0}%
\frac{1}{\sqrt[5]{j_{1}}}\text{ ,}
\end{eqnarray*}%
whence all the terms are bounded by
\begin{eqnarray}
&&\sum_{j_{1}j_{2}}\sum_{l_{1}...l_{4}}\sum_{L_{1}L_{2}}\Gamma
_{j_{1}}\Gamma _{j_{2}}\Gamma _{l_{1}}...\Gamma
_{l_{4}}C_{l_{1}0l_{2}0}^{L_{1}0}C_{L_{1}0j_{1}0}^{l0}C_{l_{1}0l_{2}0}^{L_{1}0}\times
\label{q3bou1} \\
&&\text{ \ \ \ \ \ \ \ \ \ \ \ \ \ \ \ \ \ \ \ \ \ \ \ \ \ \ \ \ \ \ \ \ \ }%
\times
C_{L_{1}0j_{2}0}^{l0}C_{l_{3}0l_{4}0}^{L_{2}0}C_{L_{2}0j_{1}0}^{l0}C_{l_{3}0l_{4}0}^{L_{2}0}C_{L_{2}0j_{2}0}^{l0}%
\frac{1}{\sqrt[5]{L_{1}}}  \notag \\
&&+\sum_{j_{1}j_{2}}\sum_{l_{1}...l_{4}}\sum_{L_{1}L_{2}}\Gamma
_{j_{1}}\Gamma _{j_{2}}\Gamma _{l_{1}}...\Gamma
_{l_{4}}C_{l_{1}0l_{2}0}^{L_{1}0}C_{L_{1}0j_{1}0}^{l0}C_{l_{1}0l_{2}0}^{L_{1}0}\times
\label{q3bou2} \\
&&\text{ \ \ \ \ \ \ \ \ \ \ \ \ \ \ \ \ \ \ \ \ \ \ \ \ \ \ \ \ \ \ \ \ }%
\times
C_{L_{1}0j_{2}0}^{l0}C_{l_{3}0l_{4}0}^{L_{2}0}C_{L_{2}0j_{1}0}^{l0}C_{l_{3}0l_{4}0}^{L_{2}0}C_{L_{2}0j_{2}0}^{l0}%
\frac{1}{\sqrt[5]{j_{1}}}\text{ .}  \notag
\end{eqnarray}%
Also,
\begin{eqnarray*}
&&\sum_{j_{1}j_{2}}\sum_{l_{1}...l_{4}}\Gamma _{j_{1}}\Gamma _{j_{2}}\Gamma
_{l_{1}}...\Gamma _{l_{4}}\sum_{L_{1}L_{2}}\!\frac{%
C_{l_{1}0l_{2}0j_{1}0}^{L_{1}l0}C_{l_{1}0l_{2}0j_{2}0}^{L_{1}l0}C_{l_{3}0l_{4}0j_{1}0}^{L_{2}l0}C_{l_{3}0l_{4}0j_{2}0}^{L_{2}l0}%
}{\sqrt[5]{L_{1}}} \\
&=&\sum_{\substack{ j_{1}j_{2}  \\ l_{1}...l_{4}}}\Gamma _{j_{1}}\Gamma
_{j_{2}}\Gamma _{l_{1}}...\Gamma _{l_{4}}\sum_{L_{1}...L_{4}}\!\frac{%
C_{l_{1}0l_{2}0j_{1}0}^{L_{1}l0}C_{l_{1}0l_{2}0j_{2}0}^{L_{3}l0}C_{l_{3}0l_{4}0j_{1}0}^{L_{2}l0}C_{l_{3}0l_{4}0j_{2}0}^{L_{4}l0}%
}{\sqrt[5]{L_{1}}}\delta _{L_{1}}^{L_{3}}\delta _{L_{2}}^{L_{4}} \\
&\leq &\sqrt{\sum_{l_{1}l_{2}j_{1}L_{1}}\Gamma _{l_{1}}\Gamma _{l_{2}}\Gamma
_{j_{1}}\frac{\left\{ C_{l_{1}0l_{2}0j_{1}0}^{L_{1}l0}\right\} ^{2}}{%
L_{1}^{2/5}}}\sqrt{\sum_{l_{1}l_{2}j_{2}L_{1}}\Gamma _{l_{1}}\Gamma
_{l_{2}}\Gamma _{j_{2}}\left\{ C_{l_{1}0l_{2}0j_{2}0}^{L_{1}l0}\right\} ^{2}}%
\times \\
&&\sqrt{\sum_{l_{3}l_{4}j_{1}L_{2}}\Gamma _{l_{3}}\Gamma _{l_{4}}\Gamma
_{j_{1}}\left\{ C_{l_{3}0l_{4}0j_{1}0}^{L_{2}l0}\right\} ^{2}}\sqrt{%
\sum_{l_{3}l_{4}j_{2}L_{2}}\Gamma _{l_{3}}\Gamma _{l_{4}}\Gamma
_{j_{2}}\left\{ C_{l_{3}0l_{4}0j_{2}0}^{L_{2}l0}\right\} ^{2}}
\end{eqnarray*}\\[-30pt]
\begin{equation}
\leq \sqrt{\sum_{l_{1}l_{2}j_{1}L_{1}}\!\Gamma _{l_{1}}\Gamma _{l_{2}}\Gamma
_{j_{1}}\!\frac{\left\{ C_{l_{1}0l_{2}0j_{1}0}^{L_{1}l0}\right\} ^{2}}{%
L_{1}^{2/5}}}\!\left\{ \sum_{l_{1}l_{2}j_{2}L_{1}}\!\Gamma _{l_{1}}\Gamma
_{l_{2}}\Gamma _{j_{2}}\!\left\{ C_{l_{1}0l_{2}0j_{2}0}^{L_{1}l0}\right\}
^{2}\!\right\} ^{3/2}\text{.}  \label{socc}
\end{equation}%
To sum up, we have obtained%
\begin{eqnarray}
\frac{(\ref{q3bou1})}{(\ref{den3})} &\leq &\frac{\frac{1}{(2l+1)^{2}}%
\times (\ref{socc})}{\frac{1}{ (2l+1)^{2}}\left\{
6\sum_{l_{1},l_{2},l_{3}}\Gamma _{l_{1}}\Gamma _{l_{2}}\Gamma
_{l_{3}}\sum_{L_{1}}\left\{
C_{l_{1}0l_{2}0l_{3}0}^{L_{1}l0}\right\}
^{2}\right\} ^{2}} \notag \\
&\leq &\left[ \frac{\sum_{l_{1}l_{2}j_{1}L_{1}}\Gamma
_{l_{1}}\Gamma _{l_{2}}\Gamma _{j_{1}}\left\{
C_{l_{1}0l_{2}0j_{1}0}^{L_{1}l0}\right\}
^{2}/L_{1}^{2/5}}{6\sum_{l_{1},l_{2},l_{3}}\Gamma _{l_{1}}\Gamma
_{l_{2}}\Gamma _{l_{3}}\sum_{L_{1}}\left\{
C_{l_{1}0l_{2}0l_{3}0}^{L_{1}l0}\right\} ^{2}}\right] ^{1/2}
\label{joe},
\end{eqnarray}%
By an identical argument we obtain also
\begin{equation}
\frac{(\ref{q3bou2})}{(\ref{den3})}\leq \left[ \frac{%
\sum_{l_{1}l_{2}j_{1}L_{1}}\Gamma _{l_{1}}\Gamma _{l_{2}}\Gamma
_{j_{1}}\left\{ C_{l_{1}0l_{2}0j_{1}0}^{L_{1}l0}\right\} ^{2}/j_{1}^{2/5}}{%
6\sum_{l_{1},l_{2},l_{3}}\Gamma _{l_{1}}\Gamma _{l_{2}}\Gamma
_{l_{3}}\sum_{L_{1}}\left\{ C_{l_{1}0l_{2}0l_{3}0}^{L_{1}l0}\right\} ^{2}}%
\right] ^{1/2}.  \label{zzzz}
\end{equation}%
Now we can adopt exactly the same line of reasoning as in the proof of
Proposition \ref{lemmaq2}, so that by trivial manipulations we deduce that (%
\ref{condq31}) and (\ref{condq32}) are indeed sufficient to have
that the RHS of (\ref{joe}) and (\ref{zzzz}) converges to zero as
$l\rightarrow +\infty $. \hfill $\square $

\subsection{The case of a general $q$: results and conjectures\label{SS :
CONJQ}}

The following proposition gives a general version of the results proved in
Sections \ref{SS : Q2} and \ref{SS : Q3}. The proof (omitted) is rather
long, and can be obtained along the lines of those of Proposition \ref%
{lemmaq2}\ and Proposition \ref{lemmaq3}.

\begin{proposition}
\label{P : qq-1}Fix $q\geq 4$. Then, a sufficient condition to have the
asymptotic relation (\ref{as11}) in the case $p=q-1$ is the following;%
\begin{eqnarray}
&&\lim_{l\rightarrow \infty }\left\{ \sup_{\lambda }\frac{\widehat{\Gamma }%
_{q-1,\lambda }\widehat{\Gamma }_{2,l;\lambda }^{\ast }}{\sum_{L}\widehat{%
\Gamma }_{q-1,L}\widehat{\Gamma }_{1,l;L}^{\ast }}+\sup_{\lambda }\frac{%
\widehat{\Gamma }_{q,l;\lambda }^{\ast }\Gamma _{\lambda }}{\sum_{L}\widehat{%
\Gamma }_{q-1,L}\widehat{\Gamma }_{1,l;L}^{\ast }}\right\}  \notag \\
&=&\lim_{l\rightarrow \infty }\left\{ \sup_{\lambda }\frac{\widehat{\Gamma }%
_{q-1,\lambda }\widehat{\Gamma }_{2,l;\lambda }^{\ast }}{\widehat{\Gamma }%
_{q,l}}+\sup_{\lambda }\frac{\widehat{\Gamma }_{q,l;\lambda }^{\ast }\Gamma
_{\lambda }}{\widehat{\Gamma }_{q,l}}\right\} =0\text{.}  \label{qq-1}
\end{eqnarray}
\end{proposition}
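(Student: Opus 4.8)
The plan is to follow verbatim the template of Propositions \ref{lemmaq2} and \ref{lemmaq3}, specializing condition (\textbf{B}) of Theorem \ref{teo1} to $p=q-1$. Here $q-p=1$, so the numerator of (\ref{as11}) carries exactly two free indices $j_1,j_2$ (with magnetic numbers $n_1,n_2$) coupled to a shared inner chain $l_1,\dots,l_{q-1}$; this is structurally the quadratic case, only with a longer inner chain. By Part 3 of Theorem \ref{teo1} it suffices to show that the ratio in (\ref{as11}) tends to zero for $m=0$. The denominator is $(\widetilde C_l^{(q)})^2$, which by (\ref{VAR2}) is proportional to $(\widehat\Gamma_{q,l})^2$ (the surviving power of $2l+1$ cancelling against the one in the numerator). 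First I would expand each of the two generalized Gaunt integrals by means of (\ref{gagaunt}): each factor carries a chain of $q-2$ couplings, the momenta $l_1,\dots,l_{q-1}$ being coupled successively through intermediate indices $L_1,\dots,L_{q-3}$ up to a total $\Lambda=L_{q-2}$, which then couples with the single free index $j_i$ to form $l$.

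The key reduction is then the one already performed for $q=3$ in the display leading to (\ref{so}), the only genuinely new ingredient being the collapse of the longer inner chain. Summing over the magnetic numbers $m_1,\dots,m_{q-1}$ of the inner chain inside the modulus, the orthonormality relation (\ref{ortoorto}) -- in the bilinear form already used at the start of the proof of Theorem \ref{teo1} -- forces the full string of intermediate couplings of the two Gaunt factors to coincide, in a single clean application, leaving only the total $\Lambda$. After summing the inner momenta $l_1,\dots,l_{q-1}$ against the weights $\Gamma_{l_i}$, the whole inner block reassembles, via (\ref{cgconv}), into the $(q-1)$-fold convolution $\widehat\Gamma_{q-1,\Lambda}$, which plays the role formerly taken by the elementary double sum $\sum_{l_1,l_2}\Gamma_{l_1}\Gamma_{l_2}(C^{L_1 0}_{l_1 0 l_2 0})^2=\widehat\Gamma_{2,L_1}$ in the $q=3$ argument. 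Forming the squared modulus introduces a second, independent inner chain with total $\Lambda'$, and what remains is a sum of four Clebsch-Gordan coefficients of the type $C^{lm}_{\Lambda M\,j_i n_i}$ over the magnetic numbers; this sum collapses -- through \cite[Eq.\ 8.7.4.20]{VMK}, exactly as in (\ref{so}) -- into a single sum of products of two Wigner $6j$ symbols with entries $\Lambda,\Lambda',j_1,j_2,l$ and a running index $s$. Lemma \ref{lemma6j} then applies without change and bounds this $6j$ sum by a constant times $\max[(2\Lambda+1)^{-1/5}\wedge(2\Lambda'+1)^{-1/5},\,(2j_1+1)^{-1/5}\wedge(2j_2+1)^{-1/5}]$.

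It then remains to substitute this bound, split the outer maximum and the two minima by the trivial inequality (\ref{trivine}), and factorize the resulting fourfold sum by a generalized Cauchy-Schwarz inequality, exactly as in the passages producing (\ref{joe}) and (\ref{zzzz}). The branch carrying the decay in $\Lambda$ reassembles, via the convolution identities (\ref{cgconv})--(\ref{cgconv1}) and the relation $\sum_\lambda\widehat\Gamma_{q-1,\lambda}\widehat\Gamma^{\ast}_{2,l;\lambda}=\widehat\Gamma_{q,l}$, into a quantity controlled by $\sup_\lambda\widehat\Gamma_{q-1,\lambda}\widehat\Gamma^{\ast}_{2,l;\lambda}/\widehat\Gamma_{q,l}$, while the branch carrying the decay in $j_i$ reassembles into a quantity controlled by $\sup_\lambda\widehat\Gamma^{\ast}_{q,l;\lambda}\Gamma_\lambda/\widehat\Gamma_{q,l}$; by the probabilistic reading (\ref{impint}) these are precisely $\sup_\lambda\mathbb{P}\{Z_{q-1}=\lambda\mid Z_q=l\}$ and $\sup_\lambda\mathbb{P}\{Z_1=\lambda\mid Z_q=l\}$, that is, the two terms in (\ref{qq-1}). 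A final splitting of the type closing the proof of Proposition \ref{lemmaq2} (separating small and large values of the decaying index and exploiting the $1/5$ power) converts these bounds into genuine negligibility.

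I expect the main obstacle to lie in the second paragraph: one must verify carefully that, upon summing the inner magnetic quantum numbers, the \emph{entire} chain $L_1,\dots,L_{q-3}$ of intermediate couplings of each Gaunt factor collapses under (\ref{ortoorto}), leaving only the total $\Lambda=L_{q-2}$, so that the $q=3$ computation of (\ref{so}) transfers unchanged with $\widehat\Gamma_{q-1,\Lambda}$ in place of $\widehat\Gamma_{2,L_1}$. Since for $q=3$ there are no such intermediate indices, this collapse is the only feature absent from the lower-order cases; once it is established, every subsequent estimate is a mechanical repetition of the arguments already carried out for $q=2$ and $q=3$, which is exactly why the detailed computation can be safely omitted.
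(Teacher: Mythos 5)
Your proposal is correct and follows exactly the route the paper itself indicates: the paper omits the proof of Proposition \ref{P : qq-1}, stating only that it is obtained along the lines of Propositions \ref{lemmaq2} and \ref{lemmaq3}, and its Remark (1) singles out as the crucial technique the reduction of sums of the type (\ref{goo}) via the 2-loop identity (\ref{fff}) --- which is precisely the ``collapse'' of the intermediate chain $L_{1},...,L_{q-3}$ that you isolate as the main obstacle. That collapse does go through by iterated application of (\ref{fff}) along the chain (rather than by a single application of the bilinear form of (\ref{ortoorto}), since the magnetic numbers $n_{1},n_{2}$ of $j_{1},j_{2}$ are not summed inside the modulus), after which the inner block reassembles into $\widehat{\Gamma }_{q-1,\Lambda }$ and the remaining steps are, as you say, a mechanical repetition of the $q=2$ and $q=3$ arguments yielding the two terms of (\ref{qq-1}).
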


\textbf{Remarks. }(\textbf{1}) As in the proofs of Proposition \ref{lemmaq2}%
\ and Proposition \ref{lemmaq3}, a crucial technique in proving Proposition %
\ref{P : qq-1} consists in the simplification of sums of the type%
\begin{equation}
\sum_{\substack{ m_{1}m_{2}m_{3}  \\ M_{1}...M_{4}}}%
C_{l_{1}m_{1}l_{2}m_{2}}^{L_{1}M_{1}}C_{L_{1}M_{1}l_{3}m_{3}}^{L_{2}M_{2}}C_{L_{2}M_{2}j_{1}n_{1}}^{lm}C_{l_{1}m_{1}l_{2}m_{2}}^{L_{3}M_{3}}C_{L_{3}M_{3}l_{3}m_{3}}^{L_{4}M_{4}}C_{L_{4}M_{4}j_{2}n_{2}}^{lm2}%
\text{,}  \label{goo}
\end{equation}%
by means of the general relation%
\begin{equation}
\sum_{m_{1}m_{2}}C_{l_{1}m_{1}l_{2}m_{2}}^{L_{1}M_{1}}C_{l_{1}m_{1}l_{2}m_{2}}^{L_{3}M_{3}}=\delta _{L_{1}}^{L_{3}}\delta _{M_{1}}^{M_{3}}.
\label{fff}
\end{equation}%
This basically means that, if in (\ref{goo}) each Clebsch-Gordan coefficient
is represented as the vertex of a connected graph, then it is possible to
\textquotedblleft reduce\textquotedblright\ such graph by cutting edges
corresponding to 2-loops -- see \cite{Marinucci} for a more detailed
discussion on these graphical methods.

(\textbf{2}) Note that, since $q\geq 4$ and according to Part C of Theorem %
\ref{teo1}, condition (\ref{as11}) \textsl{is only necessary} to have the
CLT (\ref{as00}), so that (\ref{qq-1}) cannot be used to deduce the
asymptotic Gaussianity of the frequency components of Hermite-subordinated
fields of the type $H_{q}\left[ T\right] $. Some conjectures concerning the
case $q\geq 4$, $p\neq q-1$ are presented at the end of the section.

(\textbf{3}) Observe that, in terms of the random walk $\left\{
Z_{n}\right\} $ defined in (\ref{rw1})-(\ref{rw2}),%
\begin{equation*}
\frac{\widehat{\Gamma }_{q-1,\lambda }\widehat{\Gamma
}_{2,l;\lambda }^{\ast }}{\widehat{\Gamma
}_{q,l}}=\mathbb{P}\left\{ Z_{q-1}=\lambda \mid Z_{q}=l\right\}
\text{  \ and \  }\frac{\widehat{\Gamma }_{q,l;\lambda }^{\ast
}\Gamma _{\lambda }}{\widehat{\Gamma }_{q,l}}=\mathbb{P}\left\{
Z_{1}=\lambda \mid Z_{q}=l\right\} .
\end{equation*}

\smallskip

As mentioned before, the relation (\ref{as11}) (which implies (\ref{as00})),
in the general case where $q\geq 4$ and $p\neq q-1$, is still being
investigated, as it requires a hard analysis of higher order Clebsch-Gordan
coefficients by means of graphical techniques (see for instance \cite[Ch. 11]%
{VMK}). At this stage, it is however natural to propose the following
conjecture. Recall that we focus on the CLT (\ref{as00}) because of the
equality in law $T_{l}^{\left( q\right) }\left( x\right) =\sqrt{\frac{2l+1}{%
4\pi }}a_{l0;q}$, and Corollary \ref{C : PunctualCLT}.


\textbf{Conjecture A }(\textit{Weak})\textbf{\ }\textsl{A sufficient
condition for the CLT (\ref{as00}) is}%
\begin{eqnarray}
&&\lim_{l\rightarrow \infty }\max_{1\leq p\leq q-1}\sup_{\lambda }\frac{%
\widehat{\Gamma }_{p,\lambda }\widehat{\Gamma }_{q+1-p,l;\lambda }^{\ast }}{%
\sum_{L}\widehat{\Gamma }_{p,L_{q-2}}\widehat{\Gamma }_{q+1-p,l;L}^{\ast }}
\label{rwhqw} \\
&=&\lim_{l\rightarrow \infty }\max_{1\leq p\leq q-1}\sup_{\lambda }\mathbb{P}%
\left\{ Z_{p}=\lambda \mid Z_{q}=l\right\} =0\text{ .}  \notag
\end{eqnarray}


It is worth emphasizing how condition (\ref{rwhqw}) is the exact analogous
of the necessary and sufficient condition (\ref{rwAbel}), established in
\cite{MaPe} for the high-frequency CLT\ on the torus $\mathbb{T}=[0,2\pi )$.
This remarkable circumstance may suggest the following (much more general
and, for the time being, quite imprecise) extension.


\textbf{Conjecture B }(\textit{Strong})\textbf{\ }\textsl{Let }$T$ \textsl{%
be an isotropic Gaussian field defined on the homogeneous space of a}
\textsl{compact group }$G$, \textsl{and set }$T^{\left( q\right)
}=H_{q}\left( T\right) $ ($q\geq 2$). \textsl{Then, the high-frequency}
\textsl{components of }$T^{\left( q\right) }$\textsl{\ are asymptotically
Gaussian if, and only if, it holds a condition of the type}%
\begin{equation}
\lim_{l\rightarrow l_{0}}\max_{1\leq p\leq q-1}\sup_{\lambda \in \widehat{G}}%
\frac{\widehat{\Gamma }_{p,\lambda }^{\ast }\widehat{\Gamma }%
_{q+1-p,l;\lambda }}{\sum_{L\in \widehat{G}}\widehat{\Gamma }_{p,L}^{\ast }%
\widehat{\Gamma }_{q+1-p,l;L}}=0\text{ ,}  \label{rwhqs}
\end{equation}%
\textsl{where }$\widehat{G}$ \textsl{is the dual of} $G,$ $l_{0}$ \textsl{is
some point at the boundary of} $\widehat{G}$\textsl{, and the convolutions }$%
\widehat{\Gamma }$\textsl{\ and }$\widehat{\Gamma }^{\ast }$\textsl{\ are
defined (analogously to (\ref{cgconv-2})-(\ref{starconv})) on the power
spectrum of }$T$,\textsl{\ by means of the appropriate Clebsch-Gordan
coefficients of the group.}


We leave the two Conjectures A and B as open issues for future research.


\textbf{Remark. }(\textit{On} \textquotedblleft \textit{no privileged path}%
\textquotedblright \textit{\ conditions}) In terms of $Z$, condition (\ref%
{rwhqw}) can be further interpreted as follows: for every $l$, define a
\textquotedblleft bridge\textquotedblright\ of length $q$, by conditioning $%
Z $ to equal $l$ at time $q$. Then, (\ref{rwhqw}) is verified if, and only
if, the probability that the bridge hits $\lambda $ at time $q$ converges to
zero, uniformly on $\lambda $, as $l\rightarrow +\infty $. It is also
evident that, when (\ref{rwhqw}) is verified for every $p=1,...,q-1$, one
also has that
\begin{equation}
\lim_{l\rightarrow +\infty }\sup_{\lambda _{1},...,\lambda _{q-1}\in \mathbb{%
N}}\mathbb{P}\left[ Z_{1}=\lambda _{1},...,Z_{m-1}=\lambda _{q-1}\mid Z_{q}=l%
\right] =0\text{,}  \label{rwint}
\end{equation}%
meaning that, asymptotically, the law of $Z$ does not charge any
\textquotedblleft privileged path\textquotedblright\ of length $q$\ leading
to $l$. The interpretation of condition (\ref{rwint}) in terms of bridges
can be reinforced by putting by convention $Z_{0}=0$, so that the
probability in (\ref{rwint}) is that of the particular path $0\rightarrow
\lambda _{1}\rightarrow ...\rightarrow \lambda _{q-1}\rightarrow l$,
associated with a random bridge linking $0$ and $l$.

\section{Further physical interpretation of the convolutions and connection
with other random walks on hypergroups\label{S : Roynette}}

\subsection{Convolutions as mixed states}

We recall that, in quantum mechanics, it is customary to consider two
possible initial states for a particle, i.e. those provided by the so-called
\textsl{pure states},\emph{\ }where the state of a particle is given, and
those provided by the so-called \textsl{mixed states},\emph{\ }where the
state of the particle is given by a mixture (in the usual probabilistic
sense) over different quantum states. We refer the reader to \cite{Libo} for
an introduction to these ideas. From this standpoint, the quantity $\widehat{%
\Gamma }_{q,l}$ defined in (\ref{cgconv}) is the probability associated to a
mixed state, where the mixing is performed over all possible values of the
total angular momentum. To illustrate this point, we use the standard
bra-ket notation $\left| l0\right\rangle $ to indicate the state of a
particle having total angular momentum equal to $l$ and projection $0$ on
the $z$-axis. By using this formalism, the quantity $\widehat{\Gamma }_{q,l}$
can be obtained as follows:

\begin{description}
\item[(i)] consider a system of $q$ particles $\alpha _{1},...,\alpha _{q}$
such that each $\alpha _{j}$ is in the mixed state $\Xi $ according to which
a particle is in the state $\left\vert k0\right\rangle $ with probability $%
\Gamma _{k}/\Gamma _{\ast }$ ($k\geq 0$);

\item[(ii)] obtain $\widehat{\Gamma }_{q,l}$ as the probability that the
elements of this system are coupled pairwise to form a particle in the state
$\left\vert l0\right\rangle $.
\end{description}

Now denote by $\mathbf{A}_{p,\left\vert \lambda 0\right\rangle }$
the event that the first $p$ particles $\alpha _{1},...,\alpha
_{p}$ have coupled pairwise to generate the state $\left\vert
\lambda
0\right\rangle $. Then, one also has that%
\begin{equation}
\frac{\widehat{\Gamma }_{p+1,\lambda }\widehat{\Gamma }_{q-p,l;\lambda
}^{\ast }}{\widehat{\Gamma }_{q,l}}=\Pr \text{ }\{\text{the }q\text{
particles generate }\left\vert l0\right\rangle \text{ \ }|\text{ \ }\mathbf{A%
}_{p,\left\vert \lambda 0\right\rangle }\text{ }\}.  \label{ourpint}
\end{equation}%
In particular, relation (\ref{ourpint}) yields a further physical
interpretation of the \textquotedblleft no privileged path
condition\textquotedblright\ discussed in (\ref{rwint}).

\subsection{Other convolutions and random walks on group duals}

Random walks on hypergroups, and specifically on group duals, have been
actively studied in the seventies -- see \cite[Ch. 6]{GKR}. Our aim in the
sequel is to compare our definitions with those provided in this earlier
literature, mainly by discussing the alternative physical meanings of the
associated notion of convolution. We recall from Section \ref{S :
Clebsch-Gordan} that, starting from the Wigner's $D$-matrices representation
of $SO(3)$, we obtain the unitary equivalent reducible representations $%
\{ D^{l_{1}}(g)\otimes D^{l_{2}}(g)\} $ and $\{ \oplus
_{l=|l_{2}-l_{1}|}^{l_{2}+l_{1}}D^{l}(g)\} $. Now note $\chi
_{l}(g)$
the character of $D^{l}(g)$; for all $g\in SO(3)$, we have immediately%
\begin{equation*}
\chi _{l_{1}}(g)\chi _{l_{2}}(g)=\sum_{l=|l_{2}-l_{1}|}^{l_{2}+l_{1}}\chi
_{l}(g)\text{ .}
\end{equation*}%
In \cite[p. 222]{GKR}, an alternative class of Clebsch-Gordan coefficients $%
\{C_{l_{1}l_{2}|G}^{l}$ $:$ $l_{1},l_{2},l\geq 0\}$ is defined by means of
the identity%
\begin{equation*}
\frac{1}{2l_{1}+1}\chi _{l_{1}}(g)\frac{1}{2l_{2}+1}\chi
_{l_{2}}(g)=\sum_{l}C_{l_{1}l_{2}|G}^{l}\frac{1}{2l+1}\chi _{l}(g)
\end{equation*}%
which leads to%
\begin{equation*}
C_{l_{1}l_{2}|G}^{l}=\frac{2l+1}{(2l_{1}+1)(2l_{2}+1)}\left\{
l_{1}l_{2}l\right\} \text{,}
\end{equation*}%
where we use the same notation as in \cite{VMK} and in many other physical
textbooks, i.e. we take $\left\{ l_{1}l_{2}l\right\} $ to represent the
indicator function of the event $|l_{2}-l_{1}|\leq l\leq l_{2}+l_{1}$. Of
course%
\begin{equation}
C_{l_{1}l_{2}|G}^{l}=\sum_{l=|l_{2}-l_{1}|}^{l_{2}+l_{1}}\frac{2l+1}{%
(2l_{1}+1)(2l_{2}+1)}\equiv 1\text{ .}  \label{sum}
\end{equation}%
As observed in \cite{GKR}, relation (\ref{sum}) can be used to endow $%
\widehat{SO\left( 3\right) }$ with an hypergroup structure, via the formal
addition $l_{1}+l_{2}\triangleq \sum_{l}lC_{l_{1}l_{2}|G}^{l}$. Now let $%
\left\{ \Gamma _{l}:l\geq 0\right\} $ be a collection of positive
coefficients such that $\sum_{l}\Gamma _{l}=1$. The convolutions and
*-convolutions of the $\left\{ \Gamma _{l}\right\} $ that are naturally
associated with the above formal addition are given by%
\begin{eqnarray}
\widetilde{\Gamma }_{2,l} &=&\sum_{l_{1},l_{2}}\Gamma _{l_{1}}\Gamma
_{l_{2}}C_{l_{1}l_{2}|G}^{l}\text{ , \ }\widetilde{\Gamma }%
_{3,l}=\sum_{L_{1},l_{3}}\widetilde{\Gamma }_{2,L_{1}}\Gamma
_{l_{3}}C_{L_{1}l_{3}|G}^{l}\text{, ...}  \label{Roy1} \\
\widetilde{\Gamma }_{q,l} &=&\sum_{L_{1},l_{q}}\widetilde{\Gamma }%
_{q-1,L_{q-1}}\Gamma _{l_{q}}C_{L_{q-1}l_{q}|G}^{l}\text{ },  \label{Roy3}
\end{eqnarray}%
and, for $p\geq 2$,%
\begin{equation}
\widetilde{\Gamma }_{p,l;l_{1}}^{\ast }=\sum_{l_{2}}\cdot \cdot \cdot
\sum_{l_{p}}\Gamma _{l_{2}}\cdot \cdot \cdot \Gamma
_{l_{p}}%
\sum_{L_{1}...L_{p-2}}C_{l_{1}l_{2}|G}^{L_{1}}C_{L_{1}l_{3}|G}^{L_{2}}...C_{L_{p-2}l_{p}|G}^{l}%
\text{ }.  \label{Roy4}
\end{equation}

As shown in \cite{GKR}, the objects appearing in
(\ref{Roy1})-(\ref{Roy4}) can be used to define the law of a
random walk $\widetilde{Z}=\{ \widetilde{Z}_{n}:n\geq 1\} $ on
$\mathbb{N}$ (regarded as an hypergroup isomorphic to
$\widehat{SO\left( 3\right) }$), exactly as we did in
(\ref{rw1})-(\ref{rw2}). In particular, since $\Gamma _{\ast
}=\sum
\Gamma _{l}=1$, one has that $\widetilde{\Gamma }_{p,l:l_{1}}^{\ast }=%
\mathbb{P}\{ \widetilde{Z}_{p}=l\mid \widetilde{Z}_{1}=l_{1}\} $.
Also, the convolutions (\ref{Roy1})-(\ref{Roy4}) (and therefore
the random walk $\widetilde{Z}$) enjoy a physical interpretation
which is interesting to compare with our previous result. To see
this, assume we have two mixed states $\Xi _{l_{1}}$ and$\ \Xi
_{l_{2}}$: in state $\Xi _{l_{1}}$, the
particle has total angular momentum $l_{1}$ and its projection on the axis $%
z $ takes values $m_{1}=-l_{1},...,l_{1}$ with uniform (classical)
probability $(2l_{1}+1)^{-1}$; analogous conditions are imposed for $\Xi
_{l_{2}}$. Let us now compute the probability $\Pr \left\{ l\mid \Xi
_{l_{1}},\Xi _{l_{2}}\right\} $ that the system will couple to form a
particle with total angular momentum $l$ and arbitrary projection on $z$.
Start by observing that the probability that a particle in the state $%
\left\vert l_{1}m_{1}\right\rangle $ will couple with another particle in
the state $\left\vert l_{2}m_{2}\right\rangle $ to yield the state $%
\left\vert lm\right\rangle $ is exactly given by $%
\{C_{l_{1}m_{1}l_{2}m_{2}}^{lm}\}^{2}. $ Hence, with straightforward
notation,%
\begin{eqnarray}
\Pr \left\{ l\mid \Xi _{l_{1}},\Xi _{l_{2}}\right\} &=&\sum_{m_{1}m_{2}}\Pr
\left\{ l\mid \left\vert l_{1}m_{1}\right\rangle ,\left\vert
l_{2}m_{2}\right\rangle \right\} \Pr \left\{ m_{1},m_{2}\right\}  \notag \\
&=&\sum_{m_{1}m_{2}}\Pr \left\{ l\mid \left\vert l_{1}m_{1}\right\rangle
,\left\vert l_{2}m_{2}\right\rangle \right\} \frac{1}{2l_{1}+1}\frac{1}{%
2l_{2}+1}  \notag \\
&=&\sum_{m}\sum_{m_{1}m_{2}}\left\{ C_{l_{1}m_{1}l_{2}m_{2}}^{lm}\right\}
^{2}\frac{1}{2l_{1}+1}\frac{1}{2l_{2}+1}  \notag \\
&=&\sum_{m}\frac{\left\{
l_{1}l_{2}l\right\}}{2l_{1}+1}\frac{1}{2l_{2}+1}=\frac{2l+1}{2l_{1}+1}\frac{\left\{
l_{1}l_{2}l\right\}}{2l_{2}+1}=C_{l_{1}l_{2}|G}^{l}\text{ }.
\label{cgr}
\end{eqnarray}%
It follows from (\ref{cgr}) that the quantity $\widetilde{\Gamma }_{q,l}$
can be obtained as follows:

\begin{description}
\item[(i)] consider a system of $q$ particles $\alpha _{1},...,\alpha _{q}$
such that each $\alpha _{j}$ is in the mixed state $\Xi $ according to which
a particle is in the state $\left\vert ku\right\rangle $, $u=-k,...,k$, with
probability $(2k+1)^{-1}\Gamma _{k}/\Gamma _{\ast }$ ($k\geq 0$);

\item[(ii)] obtain $\widetilde{\Gamma }_{q,l}$ as the probability that the
elements of this system are coupled pairwise to form a particle in the state
$\left| lm\right\rangle ,$ any $m=-l,...,l.$
\end{description}

To sum up, both convolutions $\widehat{\Gamma }$ and $\widetilde{\Gamma }$
can be interpreted in terms of random interacting quantum particles: $%
\widehat{\Gamma }$-type convolutions are obtained from particles in mixed
states where the mixing is performed over pure states of the form $%
\left\vert k0\right\rangle $; on the other hand, $\widetilde{\Gamma }$-type
convolutions are associated with mixed state particles where mixing is over
pure states of the type $\left\{ \left\vert ku\right\rangle
:u=-k,...,k\right\} $, uniformly in $u$ for every fixed $k.$

\section{Application: algebraic/exponential dualities\label{S : Ang PS}}

In this section we discuss explicit conditions on the angular
power spectrum $\left\{ C_{l}:l\geq 0\right\} $ of the Gaussian
field $T$ introduced in Section \ref{S : GaussSub}, ensuring that
the CLT (\ref{as00}) may hold. Our results show that, if the power
spectrum decreases exponentially, then a high-frequency CLT\
holds, whereas the opposite implication holds if the spectrum
decreases as a negative power. This duality mirrors analogous
conditions previously established in the Abelian case -- see
\cite{MaPe}. For simplicity, we stick to the case $q=2.$ Note that
the results below allow to deal with the asymptotic
(high-frequency) behaviour of the Sachs-Wolfe model
(\ref{swmodel}).

\subsection{The Exponential case}

Assume
\begin{equation}
C_{l}\approx (l+1)^{\alpha }\exp (-l)\text{ },\text{ }\alpha \geq 0.
\label{fdsq}
\end{equation}%
To prove that, in this case, (\ref{as00}) is verified for $q=2$, we will
prove that (\ref{sufcon2}) holds (recall the definition of $\Gamma _{l}$
given in (\ref{FreqSpectrum})). For the denominator of the previous
expression we obtain the lower bound%
\begin{eqnarray}
\sum_{l_{1},l_{2}=1}^{\infty }\Gamma _{l_{1}}\Gamma
_{l_{2}}(C_{l_{1}0l_{2}0}^{l0})^{2} &\geq &\sum_{l_{1}=[l/3]}^{[2l/3]}\Gamma
_{l_{1}}\Gamma _{l-l_{1}}(C_{l_{1}0l-l_{1}0}^{l0})^{2}  \notag \\
&\approx &\exp (-l)l^{2(\alpha
+1)}\sum_{l_{1}=[l/3]}^{[2l/3]}(C_{l_{1}0l-l_{1}0}^{l0})^{2}  \label{seg2}
\end{eqnarray}%
and in view of \cite{VMK}, equation 8.5.2.33, and Stirling's formula%
\begin{eqnarray*}
(\ref{seg2}) &\approx &\exp (-l)l^{2(\alpha
+1)}\sum_{l_{1}=[l/3]}^{[2l/3]}\left( \frac{l!}{l_{1}!(l-l_{1})!}\right)
^{2}\left( \frac{(2l_{1})!(2l-2l_{1})!}{(2l)!}\right) \\
&\approx &\exp (-l)l^{2(\alpha +1)}\sum_{l_{1}=[l/3]}^{[2l/3]}\frac{l^{2l+1}%
}{l_{1}^{2l_{1}+1}(l-l_{1})^{2l-2l_{1}+1}} \\
&&\times \left( \frac{(2l_{1})^{2l_{1}+1/2}(2l-2l_{1})^{2l-2l_{1}+1/2}}{%
(2l)^{2l+1/2}}\right) \\
&\approx &\exp (-l)l^{2(\alpha +1)}\sum_{l_{1}=[l/3]}^{[2l/3]}\frac{l^{1/2}}{%
l_{1}^{1/2}(l-l_{1})^{1/2}}\approx \exp (-l)l^{2(\alpha +1)}l^{1/2}.
\end{eqnarray*}%
On the other hand, recall that by the triangle conditions (Section
\ref{S : Clebsch-Gordan}) $\{ C_{l_{1}0l_{2}0}^{l0}\} ^{2}$
$\equiv 0$ unless
$l_{1}+l_{2}\geq l.$ Hence%
\begin{eqnarray*}
&&\sup_{l_{1}}\sum_{l_{2}}\Gamma _{l_{1}}\Gamma _{l_{2}}\left\{
C_{l_{1}0l_{2}0}^{l0}\right\}^{2} \leq \!K\!\sup_{l_{1}}\exp
(-l)l_{1}^{\alpha +1}\!\\
&&\times \left\{ \left\vert l\!-\!l_{1}\right\vert ^{\alpha +1} \!
+ \!\sum_{u=1}^{\infty }\!\exp (-u)\left\vert
l_{1}\!+\!u\right\vert ^{\alpha +1}\!\right\} \! \approx \!
\exp(-l)l^{2(\alpha +1)}l^{1/2}.
\end{eqnarray*}%
It is then immediate to see that that (\ref{sufcon2}) is satisfied.

\subsection{Regularly varying functions}

For $q=2$, we show below that the CLT fails for all sequences $C_{l}$ such
that: (a) $C_{l}$ is quasi monotonic, i.e. $C_{l+1}\leq C_{l}(1+K/l)$, and
(b) $C_{l}$ is such that $\lim \inf_{l\rightarrow \infty }C_{l}/C_{l/2}>0$.
In particular, a necessary condition for the CLT (\ref{as00}) to hold is that%
\emph{\ }$C_{l}/C_{l/2}\rightarrow 0$. This is exactly the same
necessary condition as was derived by \cite{MaPe} in the Abelian
case. For the general case $q\geq 2$, we expect the CLT fails for
all regularly varying angular power spectra, i.e. for all $C_{l}$
such that $\lim \inf_{\ell \rightarrow \infty }C_{l}/C_{\alpha
l}>0$ for all $\alpha >0$. Note that we are thus covering all
polynomial forms for $C_{l}^{-1}.$

Since (\ref{sufcon2}) only provides a sufficient condition for the
CLT, we need to analyze directly the more primitive condition
(\ref{as11}) for $m=0$ (however, the case $m\neq 0$ entails just a
more complicated notation). We
consider first an upper bound for the square root of the denominator of (\ref%
{as11}), which is given by $\widetilde{C}_{l}^{\left( 2\right) }$.

We have%
\begin{eqnarray*}
\widetilde{C}_{l}^{\left( 2\right) } &=&\sum_{j_{1},j_{2}}C_{j_{1}}C_{j_{2}}%
\frac{(2j_{1}+1)(2j_{2}+1)}{4\pi (2l+1)}\left( C_{j_{1}0j_{2}0}^{l0}\right)
^{2} \\
&\leq &2\sum_{j_{1},j_{2}}C_{j_{1}}C_{j_{2}}\frac{(2j_{1}+1)(2j_{2}+1)}{4\pi
(2l+1)}\left( C_{j_{1}0j_{2}0}^{l0}\right) ^{2} \\
&=&\frac{1}{2\pi }\sum_{j_{1}}C_{j_{1}}(2j_{1}+1)\sum_{j_{2}=j_{1}}^{\infty
}C_{j_{2}}\left( C_{j_{1}0l0}^{j_{2}0}\right) ^{2} \\
&\leq &\frac{1}{2\pi }\sum_{j_{1}}C_{j_{1}}(2j_{1}+1)\left\{ \sup_{j_{2}\geq
j_{1},j_{1}+j_{2}>l}C_{j_{2}}\right\} \sum_{j_{2}=0}^{\infty }\left(
C_{j_{1}0l0}^{j_{2}0}\right) ^{2}\leq KC_{l/2}\text{ .}
\end{eqnarray*}%
where we have used the relation $\frac{2j_{2}+1}{2l+1}%
(C_{j_{1}0j_{2}0}^{l0})^{2}=(C_{j_{1}0l0}^{j_{2}0})^{2}$, as well as%
\begin{equation*}
\sup_{j_{2}\geq j_{1},j_{1}+j_{2}>l}C_{j_{2}}\leq KC_{l/2}\text{ , and }%
\sum_{l=|l_{2}-l_{1}|}^{l_{2}+l_{1}}\left( C_{l_{1}0l_{2}0}^{l0}\right)
^{2}\equiv 1\text{ .}
\end{equation*}%
For the numerator of (\ref{as11}) one has that it is greater than%
\begin{eqnarray*}
&&\sum_{j_{1},j_{2}}\!C_{j_{1}}\!C_{j_{2}}\!\frac{(2j_{1}\!+\!1)(2j_{2}\!+\!1)}{%
(4\pi (2l\!+\!1))^{2}}\!\left\vert
\sum_{l_{1}}C_{l_{1}}(2l_{1}\!+%
\!1)C_{l_{1}0j_{1}0}^{l0}C_{l_{1}0j_{1}0}^{l0}C_{l_{1}0j_{2}0}^{l0}C_{l_{1}0j_{2}0}^{l0}\right\vert ^{2}
\\
&\geq &\sum_{j_{1},j_{2}}C_{j_{1}}C_{j_{2}}\frac{(2j_{1}+1)(2j_{2}+1)}{(4\pi
(2l+1))^{2}}\left\vert 5C_{2}\left\{
C_{20j_{1}0}^{l0}C_{20j_{2}0}^{l0}\right\} ^{2}\right\vert ^{2} \\
&\geq &C_{l}^{2}\frac{1}{(4\pi )^{2}}\left\vert 5C_{2}\left\{
C_{20l0}^{l0}\right\} ^{2}\right\vert ^{2}\geq KC_{l}^{2}.
\end{eqnarray*}%
The left-hand side of condition (\ref{as11}) is then bounded below by $%
\lim_{l\rightarrow \infty }$ $\left( K_{1}C_{l}^{2}\right)
/(K_{2}C_{l/2}^{2})\neq 0$, so that the CLT (\ref{as00}) cannot
hold.

\end{document}